\definecolor{LightGray}{gray}{0.9}
\title{Finite volume scheme and renormalized solutions for nonlinear elliptic
  Neumann problem with $L^1$ data}
\author{Mirella AOUN}
\author{Olivier GUIBÉ}
\affil{Laboratoire de Math\'ematiques Rapha\"el Salem, University of Rouen, UMR CNRS 6085, Avenue de l'Universit\'e, 76801 Saint Etienne du Rouvray, France\\
Email: \texttt{mirella.aoun@univ-rouen.fr, olivier.guibe@univ-rouen.fr}}
\date{\today}
\newtheorem{thm}{Theorem}[section]
\newtheorem{defin}[thm]{Definition}
\newtheorem{prop}[thm]{Proposition}
\newtheorem{lemma}[thm]{Lemma}
\newtheorem{remark}[thm]{Remark}
\newtheorem{corollary}[thm]{Corollary}
\numberwithin{equation}{section}
\theoremstyle{remark}
\newenvironment{proof2}{\noindent\textbf{Proof~}}{\hfill$\square$\bigbreak}
\newcommand{\med}{\operatorname{med}}
\newcommand{\medu}{\underline{\operatorname{med}}}
\newcommand{\medo}{\overline{\operatorname{med}}}
\newcommand{\card}{\operatorname{card}}
\newcommand{\dx}{\operatorname{d}\!x}
\newcommand{\meas}{\operatorname{meas}}
\newcommand{\diw}{\operatorname{div}}
\newcommand\R{\mathbb{R}}
\newcommand\N{\mathbb{N}}
\newcommand\mA{\mathbb{A}}
\newcommand\E{\mathcal{E}}
\newcommand\tq{\, ; \ }
\newcommand\T{\mathcal{T}}
\newcommand\A{\mathcal{A}}
\newcommand\B{\mathcal{B}}
\newcommand\M{\mathcal{M}}
\newcommand\xx{\boldsymbol{x}}
\newcommand\vv{\boldsymbol{v}}
\newcommand\n{\boldsymbol{
\mathrm{n}}_{K,\sigma}}
\newcommand\um{u^m}
\newcommand\m{\mathrm{m}}
\newcommand\Snbm{\overline{S}^m_n}
\newcommand\labm{\overline{\lambda}^m}
\begin{document}

\maketitle

\begin{abstract}
In this paper we study the convergence of a finite volume approximation of a convective diffusive elliptic problem with Neumann boundary conditions and $L^1$ data. To deal with the non-coercive character of the equation and the low regularity of the right hand-side we mix the finite volume tools and the renormalized techniques. To handle the Neumann boundary conditions we choose solutions having a null median and we prove a convergence result.
\end{abstract}



\section{Introduction}
In the present paper we are interested in the discretization by the cell-centered finite volume method of the following convection-diffusion equation with Neumann boundary conditions and $L^1$ data:
\begin{equation}\label{Pb}
    \begin{aligned}{}
   & & -\diw(\lambda(u)\nabla u -\vv u)=f &~~~ \text{in $\Omega$},
\\& &
(\lambda(u)\nabla u -\vv u)\cdot\vec{n}=0 &~~~\text{on $\partial\Omega$}.
    \end{aligned}
\end{equation}
Here $\Omega$ is a bounded polygonal connected open subset of $\mathbb{R}^{d}$, $d\geq 2$, $\vec{n}$ is the outer unit normal to $\partial \Omega$ and $\lambda$ is a continuous function such that $\lambda_{\infty}\geq \lambda(u)\geq \mu >0$ with $\lambda_\infty$ and $\mu$ two real numbers. The function $\vv$ lies in $ L^{p}(\Omega)^d$ with $2<p<+\infty$ if $d=2$, $p=d$ if $d \geq 3$, and $f$ belongs to $L^1(\Omega)$ and satisfies the compatibility condition $\int_{\Omega}f=0$.

Considering elliptic equations with $L^1$ data requires a precise meaning of
solution. Indeed we cannot expect in general to obtain a usual weak solution
which belongs  to $H^1_0(\Omega)$ for Dirichlet boundary conditions or to
$H^1(\Omega)$ for Neumann boundary conditions. Elliptic equations with $L^1$
data and Dirichlet boundary conditions are widely studied in the literature. In
\cite{BG89} Boccardo and Gallouët have obtained the existence of a solution in
the sense of distributions for a fairly class of monotone operator with measure
data. However it is known that this solution is not unique in general (see the
counter example of Serrin \cite{Serrin}). To overcome the lack of uniqueness
results, it is possible to use in the linear case the duality method (see \cite{St})
or, for general nonlinear operators, the notion of entropy solution (see \cite{zbMATH01463265}),
the notion of solution obtained as limit of approximation (SOLA) (see \cite{AD}) or the notion
of renormalized solution (see \cite{Mu2,zbMATH01463265}). The previous three notions of solution are
equivalent in the $L^1$ case and provide existence, stability and uniqueness
results for a large class of elliptic equations.
As far as the approximation of elliptic equations with Dirichlet boundary
conditions and $L^1$ data is concerned, the method of finite volume (see \cite{EGH2000}) allows to
consider such equations. In \cite{zbMATH02120443} the authors have studied equation \eqref{Pb}
with $\vv=0$ and with a measure data (and Dirichlet boundary conditions). In \cite{DGH03}
the authors have considered a linear noncoercive equation (similar to \eqref{Pb}) with measure
data (and Dirichlet boundary conditions). In both papers \cite{zbMATH02120443,DGH03} the authors
have established the convergence of the finite volume approximation to a
solution in the sense of distributions. More precisely for the equation $-\Delta
u + \diw(\vv u)=f$ in $\Omega$ with Dirichlet boundary conditions, the limit $u$ of
the finite volume scheme verifies
\[
  \left\{
    \begin{aligned}
      & u\in \bigcap_{q<d/(d-1)} W^{1,q}_{0}(\Omega) \\
      & \int_{\Omega} \nabla u \nabla \varphi \dx -\int_{\Omega} u\vv \nabla
      \varphi \dx = \int_{\Omega} f\varphi \dx,\quad \forall \varphi\in
      \bigcup_{s>d} W^{1,s}_{0}(\Omega).
    \end{aligned}
    \right.
\]
Recently mixing the techniques of renormalized solution and the finite volume
approximation has been performed in \cite{leclavier} for a noncoercive equation with
$L^{1}$ data and Dirichlet boundary conditions: the author proves that the limit
of the finite volume scheme is the renormalized solution of the equation.
Concerning  the finite elements approximation  the
model case  of the equation $-\diw(A\nabla u)=f$ with Dirichlet boundary
conditions is dealt in \cite{zbMATH05119704}.

In the present paper we have to face to a noncoercive equation, to an $L^{1}$
data and to Neumann boundary conditions. To our knowledge such a situation is less studied
in the literature both in the continuous case and the discrete case. One of the
difficulty  in the variational and linear case is that the kernel is nontrivial
and that we have to impose an additional condition on the solution to
insure uniqueness result, which is in general $\int_{\Omega} udx=0$.
In \cite{DV09} by using the Fredholm theory the authors have been studied the
operator associated to the linear version of  \eqref{Pb}. They prove that the
linear version of \eqref{Pb} with $(H^{1})'$ data verifying a compatibility
condition admits a unique weak solution. Moreover they deduce existence and uniqueness results
 for elliptic and coercive equation of the type
$-\diw(A(x,u)\nabla u))=\mu$ with Neumann boundary condition, where $\mu$ is
a bounded Radon measure. The finite volume approximation  of \eqref{Pb} with $f$
belonging to $L^{2}(\Omega)$ (with zero mean value) is studied in
\cite{CHD11}. As in the continuous case
the finite volume approximation requires the study of the kernel and for
different approximations of the convective terms the authors prove in
\cite{CHD11} that the finite volume approximation converges to a weak solution
of \eqref{Pb}. For the class of  nonlinear elliptic equations $-\Delta_{p}u=f$ with Neumann
boundary conditions, $L^{1}$ data and for small value of $p$ it is well known
that the solution is not in general a summable function so that the mean value
has no meaning. To overcome this obstacle, in \cite{ACMM,BGM1} the authors have chosen
the median value which is well defined instead of the mean value. In \cite{BGM1}
an appropriate definition of renormalized solutions is given, which gives an
existence result (see also \cite{BGM2} for the uniqueness question).
The main originality of the present paper is to consider noncoercive equation \eqref{Pb}
with $L^{1}$ data and to mix the techniques developed in \cite{BGM1} and the
finite volume method. We choose here  the median value instead of the mean value
as in \cite{CHD11}.  Since Poincar\'e-Wirtinger inequality is crucial in general we state in Proposition \ref{prop2.9} an appropriate discrete Poincar\'e-Wirtinger inequality
involving the median value (see Appendix for the proof, in the spirit of \cite{zbMATH06476912}).
In Theorem \ref{conv} we prove that the
finite volume approximation of \eqref{Pb} converges to the renormalized solution with a null
median.

\smallskip
The paper is organized as follows. In Section 2 we recall some definitions, in
particular the median of a measurable function. Moreover we present in Section 2
the continuous case and the notion of renormalized solution of \eqref{Pb} and, at
last the finite volume tools and the scheme. The main results are stated in
Section 3. Section 4 is devoted to derive the a priori estimates for the
solutions of the scheme. Using Section 4 we prove the existence of a solution of
the scheme in Section 5 while the convergence analysis is performed in Section
6. Finally we give in Appendix the proof of the discrete Poincar\'e-Wirtinger
inequality involving the median (instead of the mean value).

\section{Assumptions and definitions}
Let $\Omega$ be a connected open bounded polygonal subset of $\mathbb{R}^d$, $d\geq 2$. We consider the following nonlinear elliptic problem with Neumann boundary conditions:
\begin{equation}\label{Pb1}
\left\{    \begin{aligned}{}
   & & -\diw(\lambda(u)\nabla u -\vv u)=f &~~~ \text{in $\Omega$},
\\& &
(\lambda(u)\nabla u -\vv u)\cdot\vec{n}=0 &~~~\text{on $\partial\Omega$},
    \end{aligned} \right.
\end{equation}
where $\vec{n}$ is the outer unit normal to $\partial \Omega$. We assume that
\begin{align}\label{assump1}
     &\vv \in L^{p}(\Omega)^d\ \textnormal{with}\ 2<p<+\infty \ \textnormal{if}\ d=2,  p=d\ \textnormal{if}\  d \geq 3 , \\
&\lambda\ \textnormal{is a continuous function such that}\ \lambda_{\infty}\geq \lambda(r)\geq \mu >0, \forall r\in\R , 
\end{align}
with $\lambda_\infty$ and $\mu$ two real numbers. Moreover, we assume that
\begin{align}
    f \in L^1(\Omega),
\end{align}and it satisfies the compatibility condition
\begin{align}\label{assump2}
    \int_\Omega f \,\mathrm{d}x=0.
\end{align}
 As explained in the Introduction we deal with solutions whose median is equal to zero. Let us recall that if $u$ is measurable function, we define the median of $u$ (with respect to the Lebesgue measure), denoted by $\med(u)$ as the set of real numbers $t$ such that
 \begin{align*}
     &\meas  \{x\in\Omega:u(x)>t\}\leq\dfrac{\meas(\Omega)}{2}\\
     &\meas  \{x\in\Omega:u(x)<t\}\leq\dfrac{\meas(\Omega)}{2}.
 \end{align*}
  It is known that $\med(u)$ is non-empty compact interval (see \cite{Z}). Let us explicitly observe that if $0\in \med(u)$ then \begin{align*}
     &\meas  \{x\in\Omega:u(x)>0\}\leq\dfrac{\meas(\Omega)}{2}\\
     &\meas  \{x\in\Omega:u(x)<0\}\leq\dfrac{\meas(\Omega)}{2}.
 \end{align*}
 We denote $\underline{\med}(u)$ by
 \begin{align}
     & \underline{\med}(u)= \inf \left\{t \in \mathbb{R}:\meas\{x\in\Omega:u(x)>t\}\leq\dfrac{\meas(\Omega)}{2}\right\},
 \end{align}
 and $\overline{\med}(u)$ by
  \begin{align}
     & \overline{\med}(u)= \sup \left\{t \in \mathbb{R}:\meas\{x\in\Omega:u(x)>t\}\geq\dfrac{\meas(\Omega)}{2}\right\}.
 \end{align}

We observe that if $u$ is an element of $H^1(\Omega)$ ($\Omega$ being a connected domain), the median of $u$ is uniquely determined; $\med(u)=\underline{\med}u=\overline{\med}(u)$. However it is not the case for the finite volume approximation of \eqref{scheme} which is a piecewise-constant function; the median is then the compact interval of $\R$ $[\medu(u),\medo(u)]$.

In the whole paper, $T_n$, $n\geq 0$, denotes the truncation at height $n$ that is \[T_n(s)=\min(n,\max(s,-n)), \quad\forall s \in \mathbb{R}.\]

\subsection{Continuous Case}

In this subsection we precise the notion of solution of equation \eqref{Pb1}. Indeed as explained in the Introduction,  considering elliptic equations with $L^1$ data requires an appropriate notion of solution which provides existence, stability and uniqueness results. There is a wide literature in the Dirichlet case.  In the Neumann case, due to the lack of regularity of the solution, the mean value may not exist for nonlinear problems with $L^1$ data, which gives additional difficulties in deriving estimates and in defining an appropriate notion of renormalized solution. We refer mainly to \cite{D00} for linear problems using the duality method and  to \cite{Prignet97}, \cite{ACMM} and \cite{BGM1} for nonlinear problems.  In \cite{ACMM} and \cite{BGM1} the authors have chosen the median instead of the mean value (which may not exist if the solution is not integrable) and one of the main tool is the following Poincaré-Wirtinger inequality, see \cite{Z}.
\begin{prop}\label{propPW}
If $u \in W^{1,p}(\Omega)$, then
\begin{equation}
   \|u-\med(u)\|_{L^p(\Omega)} \leq C\|\nabla u\|_{(L^p(\Omega))^d}
\end{equation}
where $C$ is a constant depending on $p$, $d$, $\Omega$.
\end{prop}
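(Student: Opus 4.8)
The plan is to argue by contradiction via a compactness argument, exactly as for the classical mean-value Poincar\'e--Wirtinger inequality, the only extra ingredient being the stability of the median under $L^p$ convergence. Suppose the claimed inequality fails for every constant. Then there is a sequence $(u_n)$ in $W^{1,p}(\Omega)$ with $\|u_n-\med(u_n)\|_{L^p(\Omega)} > n\,\|\nabla u_n\|_{(L^p(\Omega))^d}$. Since $\Omega$ is connected, $\med(u_n)$ reduces to a single real number $m_n$, and $u_n$ cannot be constant (otherwise both sides vanish and the strict inequality fails), so $\|u_n-m_n\|_{L^p(\Omega)}>0$. Set
\[
v_n=\frac{u_n-m_n}{\|u_n-m_n\|_{L^p(\Omega)}}.
\]
By the translation invariance and positive homogeneity of the median one has $\med(v_n)=0$, while $\|v_n\|_{L^p(\Omega)}=1$ and $\|\nabla v_n\|_{(L^p(\Omega))^d}<1/n\to 0$.

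First I would note that $(v_n)$ is bounded in $W^{1,p}(\Omega)$. Since $\Omega$ is a bounded connected Lipschitz (polygonal) domain, the Rellich--Kondrachov theorem gives, up to a subsequence, $v_n\to v$ strongly in $L^p(\Omega)$ and $v_n\rightharpoonup v$ weakly in $W^{1,p}(\Omega)$; passing to a further subsequence we may assume $v_n\to v$ a.e. in $\Omega$. The strong $L^p$ convergence gives $\|v\|_{L^p(\Omega)}=1$, while the weak lower semicontinuity of the gradient norm together with $\|\nabla v_n\|\to 0$ forces $\nabla v=0$. As $\Omega$ is connected, $v$ is therefore a nonzero constant $c$.

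The heart of the argument is then to show that $0\in\med(v)$, which is impossible for a nonzero constant and yields the contradiction. For this I would establish the stability of the median under a.e. convergence. Since $0\in\med(v_n)$ for all $n$, we have $\meas\{v_n>0\}\le \meas(\Omega)/2$. Fix $\varepsilon>0$; whenever $v(x)>\varepsilon$ and $v_n(x)\to v(x)$ one has $v_n(x)>0$ for all large $n$, so $\ind_{\{v>\varepsilon\}}\le \liminf_n \ind_{\{v_n>0\}}$ a.e. Fatou's lemma then gives
\[
\meas\{v>\varepsilon\}\le \liminf_n \meas\{v_n>0\}\le \frac{\meas(\Omega)}{2},
\]
and letting $\varepsilon\downarrow 0$ (monotone convergence along the increasing sets $\{v>\varepsilon\}\uparrow\{v>0\}$) yields $\meas\{v>0\}\le \meas(\Omega)/2$. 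The symmetric argument applied to $\{v_n<0\}$ gives $\meas\{v<0\}\le \meas(\Omega)/2$, so $0\in\med(v)$. Since $v\equiv c$ with $c>0$ would give $\meas\{v>0\}=\meas(\Omega)$ and $c<0$ would give $\meas\{v<0\}=\meas(\Omega)$, we must have $c=0$, contradicting $\|v\|_{L^p(\Omega)}=1$.

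I expect the main obstacle to be precisely this median-stability step: unlike the mean value, which is a continuous linear functional and passes trivially to the $L^p$ limit, the median is defined through level-set measures, and its good behaviour under convergence must be extracted by the Fatou and monotone-convergence argument above. Everything else---the normalization, the boundedness in $W^{1,p}$, the Rellich compactness, and the vanishing of the limiting gradient---is routine.
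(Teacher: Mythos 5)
Your proof is correct, but it takes a genuinely different route from the paper. The paper never proves Proposition \ref{propPW} itself: it is quoted directly from \cite{Z}, and the argument the paper actually writes out (in the Appendix, for the discrete analogue, Proposition \ref{PWM}) is constructive rather than by compactness. There one starts from the BV--median embedding $\bigl(\int_\Omega|u-m|^{d/(d-1)}\,\mathrm{d}x\bigr)^{(d-1)/d}\le C(\Omega)\,TV_\Omega(u)$ for $m\in\med(u)$ (Theorem \ref{TBV}, also from \cite{Z}), and bootstraps to general $p$ by applying it to the power substitution $v=(u-m)|u-m|^{p-1}$, for which $0\in\med(v)$, followed by H\"older's inequality. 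That approach buys two things your contradiction argument cannot: an explicit, trackable constant, and---crucially for this paper---a proof that survives discretization with a constant uniform over all admissible meshes (a compactness argument run mesh by mesh would only yield a mesh-dependent constant, useless for the finite volume analysis). Conversely, your Peetre--Tartar-style argument is shorter and more elementary in the continuous setting, and your key ingredient---stability of the median under a.e.\ convergence via Fatou's lemma---is exactly the device the paper itself uses in Step 2 of the proof of Proposition \ref{EstTn} to show that the limit of the scheme has null median, so nothing in your toolbox is foreign to the paper. Two small points to tidy: for $p=1$ you cannot extract a weakly convergent subsequence in $W^{1,1}$ (it is not reflexive), but you do not need to, since $\|\nabla v_n\|_{(L^1(\Omega))^d}\to 0$ strongly already forces $\nabla v=0$ in the sense of distributions; and the $\varepsilon$-regularization in the Fatou step is superfluous, since $\ind_{\{v>0\}}\le\liminf_n \ind_{\{v_n>0\}}$ holds a.e.\ directly.
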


In \cite{BGM1} the authors prove the existence of a renormalized solution for a class of nonlinear problems and prove in \cite{BGM2} uniqueness results under additional assumptions.  In the present paper we use the framework of renormalized solutions. In the particular case of equation \eqref{Pb1}, let us recall the following definition (see \cite{BGM1}).
\begin{defin} \label{def}
A real  function $u$ defined in $\Omega$ is a renormalized solution to \eqref{Pb1} if
\begin{gather}
\label{def0}u \text{ is measurable and finite almost everywhere in $\Omega$,}
\\
\label{def1}
     T_n(u) \in H^1(\Omega),\text{ for any } n>0,
\\
\label{def2}
   \lim\limits_{n \rightarrow +\infty}\frac{1}{n} \int_{\{x\in\Omega,|u(x)|<n\}} \lambda(u) |\nabla u |^2 \, \mathrm{d}x=0,
\end{gather}
and the following equation holds
\begin{equation}\label{def3}
     \begin{aligned}
 &\int_\Omega S(u) \lambda(u)  \nabla u\cdot \nabla
 \varphi\,\mathrm{d}x +
 \int_\Omega S'(u) \lambda(u) \varphi\nabla u \cdot \nabla u\,\mathrm{d}x
 \\&{} -\int_\Omega u S(u)\vv \cdot \nabla\varphi \, \mathrm{d}x
 - \int_\Omega u  S'(u) \varphi  \vv \cdot \nabla u  \,\mathrm{d}x
 = \int_\Omega f \varphi S(u)\,\mathrm{d}x,
 \end{aligned}
\end{equation}
for every $S \in W^{1,\infty}(\mathbb{R}) $ having compact support and for every $ \varphi \in L^\infty(\Omega)\cap H^{1}(\Omega)$.
\end{defin}

By combining \cite{BGM1} and \cite{BGM2}  we have the following existence and uniqueness result.

\begin{thm}
Let us assume that \eqref{assump1}--\eqref{assump2} hold true. Then there exists a unique renormalized solution $u$ of \eqref{Pb1} such that $\med(u)=0$.
\end{thm}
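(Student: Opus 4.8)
The statement combines an existence result and a uniqueness result, and the plan is to treat them separately, following \cite{BGM1} and \cite{BGM2} respectively while checking that the noncoercive term $-\diw(\vv u)$ fits those frameworks. The principal part $-\diw(\lambda(u)\nabla u)$ is a Leray--Lions operator to which the median-based existence theory of \cite{BGM1} applies directly, so the real work is to absorb the first order term. For existence I would argue by approximation: choose $f_\varepsilon \in L^\infty(\Omega)$ with $\int_\Omega f_\varepsilon \dx = 0$ and $f_\varepsilon \to f$ in $L^1(\Omega)$, and for each $\varepsilon$ solve the regularized problem by the variational/Fredholm analysis of \cite{DV09,CHD11}, which produces a weak solution $u_\varepsilon \in H^1(\Omega)$; normalizing the free additive constant so that $\med(u_\varepsilon)=0$ selects a representative in the kernel.

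The heart of the matter is a uniform a priori estimate. Testing the regularized equation with $T_k(u_\varepsilon)$ and using $\lambda \geq \mu$ gives
$$\mu \int_\Omega |\nabla T_k(u_\varepsilon)|^2 \dx \leq \int_\Omega \vv\, T_k(u_\varepsilon) \cdot \nabla T_k(u_\varepsilon) \dx + k\|f\|_{L^1(\Omega)},$$
where I have used that $\nabla T_k(u_\varepsilon)$ vanishes on $\{|u_\varepsilon|>k\}$. Since truncation preserves the zero median, $\med(T_k(u_\varepsilon))=0$, and Proposition \ref{propPW} together with the Sobolev embedding controls $\|T_k(u_\varepsilon)\|_{L^{2^*}}$ by $\|\nabla T_k(u_\varepsilon)\|_{L^2}$. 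The convective term is then estimated by Hölder with exponents $(d,2,2^*)$, the integrability threshold on $\vv$ being chosen precisely so that $2^*$ is the dual Sobolev exponent. For a general $\vv$ one cannot absorb outright, so I would split $\vv = \ww + \boldsymbol z$ with $\ww \in L^\infty(\Omega)^d$ and $\|\boldsymbol z\|_{L^d}$ as small as desired; the $\boldsymbol z$ part is absorbed into $\mu\int|\nabla T_k(u_\varepsilon)|^2$, while the bounded part is closed through a preliminary Marcinkiewicz estimate on $u_\varepsilon$. This bootstrap yields $\int_\Omega \lambda(u_\varepsilon)|\nabla T_k(u_\varepsilon)|^2 \dx \leq Ck$ uniformly in $\varepsilon$, the energy estimate underlying the renormalization condition \eqref{def2}.

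With these bounds, passage to the limit follows the standard renormalized machinery. The uniform $H^1$ bound on each $T_k(u_\varepsilon)$ and the median normalization give, along a subsequence, $u_\varepsilon \to u$ almost everywhere with $\med(u)=0$ and $T_k(u_\varepsilon) \rightharpoonup T_k(u)$ weakly in $H^1(\Omega)$; the almost everywhere convergence of $\nabla T_k(u_\varepsilon)$ is obtained by the Boccardo--Murat energy argument. Since every admissible $S$ has compact support, one may pass to the limit directly in the equation tested against $S(u_\varepsilon)\varphi$ to recover \eqref{def3}, while the renormalization condition \eqref{def2} is obtained from a separate estimate of the energy on the level sets $\{n \le |u_\varepsilon| \le 2n\}$. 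This establishes existence.

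For uniqueness I would follow the comparison scheme of \cite{BGM2}. Given two renormalized solutions $u_1,u_2$ with null median, one subtracts their formulations \eqref{def3}, tests against a regularized sign of $T_m(u_1)-T_m(u_2)$ multiplied by a truncation renormalization, and uses \eqref{def2} to kill the renormalization remainders as the truncation parameters tend to infinity; the strict monotonicity of $\xi \mapsto \lambda(\cdot)\xi$ then forces $\nabla u_1 = \nabla u_2$, whence $u_1 - u_2$ is constant and the common null median gives $u_1 = u_2$. The main obstacle throughout is exactly the noncoercivity: both the a priori estimate and the uniqueness comparison require absorbing the first order term $\vv\,u\cdot\nabla u$ into the diffusion, which succeeds only because the median Poincaré--Wirtinger inequality restores the $L^p$ control on $u$ lost by the Neumann condition and because the integrability exponent of $\vv$ sits exactly at the Sobolev-critical value.
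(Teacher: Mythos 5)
Your existence sketch is broadly consistent with what the paper relies on: the paper simply invokes the approximation framework of \cite{BGM1}, and your regularization of $f$, the median Poincar\'e--Wirtinger estimate of Proposition \ref{propPW}, and the Boccardo--Murat limit passage are precisely the ingredients of that framework. The genuine gap is in your uniqueness argument. You propose to subtract the two renormalized formulations and conclude from ``the strict monotonicity of $\xi\mapsto\lambda(\cdot)\xi$'' that $\nabla u_1=\nabla u_2$. This step fails: the principal part of \eqref{Pb1} is $\lambda(u)\nabla u$, so the difference of the fluxes is $\lambda(u_1)\nabla u_1-\lambda(u_2)\nabla u_2$, and since $\lambda$ is evaluated at two \emph{different} functions, the quantity $\bigl(\lambda(u_1)\nabla u_1-\lambda(u_2)\nabla u_2\bigr)\cdot\nabla(u_1-u_2)$ has no sign in general. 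This is exactly the obstruction the paper points out: equation \eqref{Pb1} is \emph{not} in the scope of \cite{BGM2}, whose uniqueness results concern operators of the form $-\diw(a(x,\nabla u)+\Phi(x,u))$ where the principal part $a(x,\nabla u)$ does not depend on $u$; no monotonicity comparison is available for a $u$-dependent diffusion coefficient without additional structure.

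The paper's fix, which your proposal misses, is a change of unknown (Kirchhoff transform): set $\widetilde{\lambda}(r)=\int_0^r\lambda(s)\,\mathrm{d}s$ and $w=\widetilde{\lambda}(u)$. Then $\nabla w=\lambda(u)\nabla u$, the function $w$ still has null median (since $\widetilde{\lambda}$ is strictly increasing with $\widetilde{\lambda}(0)=0$), and $w$ is a renormalized solution of $-\diw(\nabla w-\vv\,\widetilde{\lambda}^{-1}(w))=f$ with Neumann boundary conditions, an equation whose principal part is the Laplacian and whose convective nonlinearity $\widetilde{\lambda}^{-1}$ is Lipschitz continuous because $\widetilde{\lambda}'=\lambda\geq\mu>0$. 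Theorem 4.2 of \cite{BGM2} then applies to this transformed equation, giving uniqueness of $w$ and hence of $u=\widetilde{\lambda}^{-1}(w)$. Without this reduction (or some substitute argument handling the $u$-dependence of $\lambda$), your comparison scheme does not go through, and this quasilinear difficulty is the whole point of the paper's remark following the theorem.
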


\begin{remark} As far as the uniqueness is concerned equation \eqref{Pb1} is not directly in the scope of \cite{BGM2}. Indeed uniqueness results are mainly obtained for equations whose prototype is $-\diw(a(x,\nabla u)+\Phi(x,u))=f$ with Neumann boundary conditions. The operator $a(x,\nabla u)$ does not depend on $u$. Due to the presence of $\lambda(u)$ in equation \eqref{Pb1} the quasilinear character allows one to obtain the uniqueness by a changement of unknow. Since $\lambda(r)$ is a continuous function such that $\lambda_\infty\geq \lambda(r)\geq\mu>0$, by defining $\widetilde{\lambda}(r)=\int_0^r\lambda(s) \mathrm{d}s$ and $w=\widetilde{\lambda}(u)$, we can verify that the function $w$ has a null median and that $w$ is a renormalized solution of
\begin{equation}\label{Pbis}
\left\{    \begin{aligned}{}
   & & -\diw(\nabla w -\vv \widetilde{\lambda}^{-1}(w))=f &~~~ \text{in $\Omega$},
\\& &
(\nabla w -\vv \widetilde{\lambda}^{-1}(w))\cdot\vec{n}=0 &~~~\text{on $\partial\Omega$}.
    \end{aligned}\right.
\end{equation}
At last since the function $\widetilde{\lambda}^{-1}$ is Lipschitz continuous, Theorem 4.2 of \cite{BGM2} allows one to conclude that $w$ is unique so that $u$ is unique.
\end{remark}

\subsection{Finite Volume}
We now introduce the discrete settings. Let us first recall the notion of admissible discretization of $\Omega$ , the definitions of the discrete norms and the space of piecewise functions associated to an admissible mesh following \cite{EGH2000}.

  \begin{defin}[Admissible mesh]\label{admi} An admissible mesh $\mathcal{M}$ of
    $\Omega$ is given by a finite family $\mathcal{T}$ of disjoint open convex
    polygonal subsets of $\Omega$, a finite family $\mathcal{E}$ of disjoint
    subsets of $\Bar{\Omega}$ (the edges) consisting in non-empty open convex
    subsets of affine hyperplanes and a family $\mathcal{P}=(x_K)_{K\in
      \mathcal{T}}$ of points in $\Omega$ such that
    \begin{itemize}
    \item $\Bar{\Omega}=\cup_{K\in \mathcal{T}}\Bar{K}$,
    \item each $\sigma \in \mathcal{E}$ is a non-empty open subset of $\partial K$ for some $K \in \mathcal{T}$,
    \item by denoting $\mathcal{E}_K=\{\sigma \in \mathcal{E}, \sigma \subset \partial K \}$, $\partial K=\cup_{\sigma \in \mathcal{E}_K}\sigma$ for all $K \in \mathcal{T}$,
    \item for all $K \ne L$ in $\mathcal{T}$, either the $(d-1)-$dimentional measure of $\Bar{K}\cap \Bar{L}$ is zero or $\Bar{K}\cap \Bar{L}=\Bar{\sigma}$ for some $\sigma \in \mathcal{E}$, which is then denoted $\sigma=K|L$,
    \item for all $K\in\mathcal{T}$, $x_K \in K$,
    \item for all $\sigma=K|L \in \mathcal{E}$, the straight line $(x_K,x_L)$ intersects and is orthogonal to $\sigma$,
    \item for all $\sigma \in \mathcal{E}$ such that $\sigma \subset \partial \Omega \cap \partial K$,  the line which is orthogonal to $\sigma$ and goes through $x_K$ intersects $\sigma$.
    \end{itemize}

 \end{defin}
In the whole of the present paper, we use the following notations associated
with an admissible discretization. In the set of edges $\mathcal{E}$, we
distinguish the set of interior edges $\mathcal{E}_{int}$ and the set of
boundary edges $\mathcal{E}_{ext}$. We denote by m($K$) the $d$-dimensional
measure of a control volume $K$ and m($\sigma$) the $(d-1)$-dimensional measure
of $\sigma$. For all $\sigma \in \mathcal{E}_K$, $\boldsymbol{
  \mathrm{n}}_{K,\sigma}$
is the unit normal to $\sigma$ outwards $K$. If $\sigma=K|L \in
\mathcal{E}_{int}$, we denote by $d_\sigma$ the Euclidian distance between $x_K$
and $x_L$, $d_\sigma=d_{K,\sigma}+d_{L,\sigma}$ and $d_\sigma=d_{K,\sigma}$ if
$\sigma \in \mathcal{E}_{ext}\cap\mathcal{E}_{K}$.\\
  The size of the mesh is defined by
  $$h_\mathcal{M}=\mathrm{sup}_{K\in\mathcal{T}}\mathrm{diam}(K).$$\\
  We assume that the mesh satisfies the following assumption
  \begin{equation}\label{cmesh}
      \exists \ \xi>0\; \text{such that }d(x_K,\sigma)\geq \xi d_\sigma,~~\forall {\mathcal T}\in \mathcal{E}, \forall \sigma \in \mathcal{E}_K.
  \end{equation}
 An example of admissible mesh in the sense of the above definition is shown in Figure \ref{fig:D_sigma}.

 The space of piecewise functions associated to an admissible mesh, denoted by
 $X(\mathcal{M})$, is defined as the set of functions from $\Omega$ to $\mathbb{R}$ wich are constant over each control volume of the mesh.

  \begin{defin}[Discrete $W^{1,p}$ norm] Let $\Omega$ be an open bounded
    polygonal subset of $\mathbb{R}^d$, $d\geq 2$, and let $\mathcal{M}$ be an
    admissible mesh. For $u=(u_{K})_{K\in\mathcal{T}}\in X(\mathcal{T})$ and $p \in [1,+\infty[$, the
    discrete $W^{1,p}$-semi-norm is defined by
     $$|u|_{1,p,\mathcal{M}}=\left(\sum_{\underset{\sigma=K|L}{\sigma\in\mathcal{E}_{int}}}\frac{\m(\sigma)}{d_{\sigma}^{p-1}}\left|{u_K-u_L}\right|^p\right)^{\frac{1}{p}},\hspace*{1cm}
     \forall u\in X(\mathcal{T})$$
and the discrete $W^{1,p}$-norm is defined by
\[
  \|u\|_{1,p,\mathcal{M}}=\|u\|_{0,p}+|u|_{1,p,\mathcal{M}},
  \hspace*{2cm}\forall u \in X(\mathcal{T})
\]
where $\|u\|_{0,p}$ is the $L^p$
norm for piecewise constant functions, $\forall p\in[1,+\infty[$,
\[\
  \|u\|_{0,p}= \left (\int_\Omega|u(x)|^{p}\mathrm{d}x \right
)^{\frac{1}{p}}=\left( \sum_{K\in
    \mathcal{M}}\m(K)|u_K|^p\right)^{\frac{1}{p}},\;\;\forall u \in
X(\mathcal{T}).
\]
  \end{defin}
  We present now discrete functional analysis results. We refer the reader to
  [\cite{CHD11}, Lemma 6.1] for a proof of the following discrete Sobolev
  inequality. 
 \begin{prop}[Discrete Sobolev inequality]
 Let $\Omega$ be a bounded polygonal open subset of $\R^d$ and let $\M$ be an
 admissible mesh satisfying \eqref{cmesh}. Let $q<+\infty$ if $d=2$ and
 $q=\frac{2d}{d-2}$ if $d\geq 3$. Then there exists $C=C(\Omega,\xi,q)$ such
 that, for all $u=(u_K)_{K\in\T}\in X(\mathcal{T})$,
 \begin{equation}\label{Sobolev}
     \|u\|_{0,q}\leq C\left(|u|_{1,2,\M}+\|u\|_{0,2}\right).
 \end{equation}
 \end{prop}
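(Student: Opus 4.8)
The plan is to combine a discrete $BV$-type (Neumann) embedding with the nonlinear power trick of the Gagliardo--Nirenberg--Sobolev theory. I would first treat $d\geq 3$, for which $q=\frac{2d}{d-2}$, and handle $d=2$ at the end. All constants below depend only on $\Omega$, $\xi$ and $q$.

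The heart of the matter is the following discrete $W^{1,1}$ embedding: for every $v\in X(\T)$,
\[
  \|v\|_{0,\frac{d}{d-1}}\leq C\bigl(|v|_{1,1,\M}+\|v\|_{0,1}\bigr).
\]
I would prove it following the continuous embedding $BV(\Omega)\hookrightarrow L^{d/(d-1)}(\Omega)$: for a.e. $x$ in a control volume $K$ one bounds $|v(x)|=|v_K|$ by the one-dimensional variation of $v$ along the $d$ coordinate lines through $x$, multiplies these $d$ quantities, integrates over $\Omega$ and applies the generalized Hölder inequality. The variations met along a line are sums of the jumps $|v_K-v_L|$, and it is precisely the orthogonality of $(x_K,x_L)$ to $\sigma$ together with the regularity \eqref{cmesh} that converts the geometric line-crossings into the edge sum $|v|_{1,1,\M}=\sum_{\sigma=K|L\in\mathcal{E}_{int}}\m(\sigma)|v_K-v_L|$. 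The extra term $\|v\|_{0,1}$ is the genuinely Neumann contribution: it replaces the boundary jumps that occur in the Dirichlet version, where $v$ is extended by $0$ outside $\Omega$. This step, where essentially all the mesh geometry lives, is the one I expect to be the main obstacle; the remainder is bookkeeping.

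Granting this embedding, I apply it to $v=|u|^{\gamma}$ with $\gamma=\frac{2(d-1)}{d-2}$, chosen so that $\frac{\gamma d}{d-1}=q$ and $2(\gamma-1)=q$; note $2<\gamma<q$. Then $\|v\|_{0,\frac{d}{d-1}}=\|u\|_{0,q}^{\gamma}$. Using $\bigl||u_K|^{\gamma}-|u_L|^{\gamma}\bigr|\leq\gamma\max(|u_K|,|u_L|)^{\gamma-1}|u_K-u_L|$ and the Cauchy--Schwarz inequality over the edges, after the splitting $\m(\sigma)|u_K-u_L|=\bigl(\tfrac{\m(\sigma)}{d_\sigma}|u_K-u_L|^2\bigr)^{1/2}(\m(\sigma)d_\sigma)^{1/2}$, one gets
\[
  |v|_{1,1,\M}\leq\gamma\,|u|_{1,2,\M}\,
  \Bigl(\sum_{\sigma=K|L}\m(\sigma)d_\sigma\,\max(|u_K|,|u_L|)^{q}\Bigr)^{1/2}.
\]
Bounding $\max(|u_K|,|u_L|)^{q}\leq|u_K|^{q}+|u_L|^{q}$ and using $\sum_{\sigma\in\mathcal{E}_K}\m(\sigma)d_\sigma\leq\frac{d}{\xi}\m(K)$ --- which follows from \eqref{cmesh} and the pyramidal decomposition of $K$ with apex $x_K$, since $\m(\sigma)d(x_K,\sigma)=d\,\m(\mathrm{pyr}(x_K,\sigma))$ --- the last factor is at most $(\tfrac{d}{\xi})^{1/2}\|u\|_{0,q}^{\gamma-1}$ (recall $q/2=\gamma-1$). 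Hence $|v|_{1,1,\M}\leq C\,|u|_{1,2,\M}\,\|u\|_{0,q}^{\gamma-1}$.

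It remains to treat $\|v\|_{0,1}=\|u\|_{0,\gamma}^{\gamma}$. Interpolating $\|u\|_{0,\gamma}\leq\|u\|_{0,2}^{1-\theta}\|u\|_{0,q}^{\theta}$ with $\frac1\gamma=\frac{1-\theta}{2}+\frac{\theta}{q}$, a direct computation gives $\gamma(1-\theta)=1$ and $\gamma\theta=\gamma-1$ (the exponents are perfectly tuned because $1-\tfrac{\gamma}{q}=\tfrac1d=\tfrac12-\tfrac1q$), so that $\|v\|_{0,1}\leq\|u\|_{0,2}\,\|u\|_{0,q}^{\gamma-1}$. Plugging both bounds into the embedding and writing $N:=\|u\|_{0,q}$ (finite, since $u$ is piecewise constant on a fixed mesh) yields
\[
  N^{\gamma}\leq C\bigl(|u|_{1,2,\M}+\|u\|_{0,2}\bigr)\,N^{\gamma-1}.
\]
If $N>0$ I divide by $N^{\gamma-1}$ and obtain $\|u\|_{0,q}\leq C(|u|_{1,2,\M}+\|u\|_{0,2})$, i.e. \eqref{Sobolev}; if $N=0$ there is nothing to prove. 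For $d=2$ the same scheme applies with the exponent $\tfrac{d}{d-1}$ replaced by $2$ and $\gamma=q/2$, a final interpolation together with Young's inequality absorbing the surplus $\|u\|_{0,q}$ factor and giving \eqref{Sobolev} for every finite $q$.
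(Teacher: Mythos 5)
The paper does not actually prove this proposition: it cites [CHD11, Lemma~6.1] and moves on. So your attempt must be judged on its own merits, and the natural internal benchmark is the Appendix, where the authors prove the discrete Poincaré--Wirtinger \emph{median} inequality by exactly the strategy you adopt: a $BV$-type embedding applied to a power of $u$, Cauchy--Schwarz over the edges, and the mesh-regularity bound. Your reduction is correct and complete in the case $d\geq 3$: the exponent $\gamma=\frac{2(d-1)}{d-2}$ does satisfy $\frac{\gamma d}{d-1}=q$ and $2(\gamma-1)=q$, the Cauchy--Schwarz splitting $\m(\sigma)|u_K-u_L|=\bigl(\tfrac{\m(\sigma)}{d_\sigma}|u_K-u_L|^2\bigr)^{1/2}(\m(\sigma)d_\sigma)^{1/2}$ is the right one, the bound $\sum_{\sigma\in\E_K}\m(\sigma)d_\sigma\leq\frac{d}{\xi}\m(K)$ is precisely the consequence of \eqref{cmesh} used in the Appendix (inequality \eqref{C} there), and the interpolation exponents $\gamma(1-\theta)=1$, $\gamma\theta=\gamma-1$ check out, so the division by $\|u\|_{0,q}^{\gamma-1}$ (finite since $u$ is piecewise constant) closes the argument. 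The $d=2$ sentence is terse but the Young-absorption claim is sound.

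The genuine gap is the lemma you yourself flag as the main obstacle, namely $\|v\|_{0,\frac{d}{d-1}}\leq C\bigl(|v|_{1,1,\M}+\|v\|_{0,1}\bigr)$, and the route you sketch for it does not work as described. The assertion that one can bound $|v(x)|$ by the one-dimensional variations of $v$ along the $d$ coordinate lines through $x$ is false in the Neumann setting: a nonzero constant function has zero variation along every line. The $L^1$ term cannot simply be ``appended'' afterwards; in the Nirenberg argument it must enter the line-by-line estimate itself (one anchors the value of $v$ at $x$ to an average of $v$ along the chord through $x$), and for a non-convex polygonal $\Omega$ chords exit and re-enter the domain, so one needs a covering of $\Omega$ by star-shaped pieces and a chaining argument --- this is substantial work, not bookkeeping. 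The clean repair is already in the paper's toolbox: for $v\in X(\T)$ one has $v\in BV(\Omega)$ with $TV_\Omega(v)=\sum_{\sigma=K|L\in\E_{int}}\m(\sigma)|v_K-v_L|=|v|_{1,1,\M}$ exactly (no mesh regularity or orthogonality is needed for this identity --- \eqref{cmesh} enters only in your step $\sum_{\sigma\in\E_K}\m(\sigma)d_\sigma\leq\frac{d}{\xi}\m(K)$, so your intuition that ``all the mesh geometry lives'' in the embedding is misplaced), and the continuous embedding of $BV(\Omega)$ into $L^{\frac{d}{d-1}}(\Omega)$ for a bounded Lipschitz domain gives the lemma at once, the $\|v\|_{0,1}$ term coming from the $BV$ norm. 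This is precisely how the Appendix obtains its inequality \eqref{BV}; substituting it for your line-crossing sketch makes your proof complete.
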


In the already cited references discrete Poincaré and Poincaré-Wirtinger
inequalities are related to the discrete space $W^{1,p}_0(\Omega)$ and zero
boundary condition or the discrete space $W^{1,p}(\Omega)$ with discrete mean
value. We derive here a discrete Poincaré-Wirtinger inequality involving the
median. The proof is given in the appendix.
\begin{prop}[Discrete Poincaré-Wirtinger median inequality]\label{prop2.9}
  Let $\Omega$ be an
  open bounded connected polyhedral domain of $\R^d$ and let $\mathcal{M}$ be an
  admissible mesh satisfying \eqref{cmesh}. Then for $1\leq p <+\infty
  $ there exists a constant $C>0$ only depending on $\Omega$, $d$ and $p$ such that
  \begin{equation}\label{PW}
      \|u-c\|_{0,p}\leq \dfrac{C}{\xi^{(p-1)/p}}|u|_{1,p,\mathcal{M}}, \hspace*{1cm}\forall u\in X(\mathcal{T})
  \end{equation}{}
  where $c$ belongs to  $\med(u)$.
  \end{prop}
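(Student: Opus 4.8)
The plan is to prove \eqref{PW} first in the case $p=1$, where the semi-norm reduces to $|u|_{1,1,\mathcal{M}}=\sum_{\sigma=K|L\in\mathcal{E}_{int}}\m(\sigma)|u_K-u_L|$, and then to reach a general $p$ by a composition-and-H\"older bootstrap. The guiding remark is that any $u\in X(\mathcal{T})$ is piecewise constant on the polyhedral cells, hence a function of bounded variation on $\Omega$, and its total variation is exactly $|Du|(\Omega)=\sum_{\sigma=K|L}\m(\sigma)|u_K-u_L|=|u|_{1,1,\mathcal{M}}$; moreover the measure-theoretic median of $u$ is unchanged by passing to the piecewise-constant representative, so $\med(u)$ is the same object in both the discrete and continuous pictures.

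For the case $p=1$ and $c\in\med(u)$, I would show $\|u-c\|_{0,1}\le C\,|u|_{1,1,\mathcal{M}}$ with $C=C(\Omega,d)$. Writing $\|u-c\|_{0,1}=\int_c^{+\infty}\meas\{u>t\}\,dt+\int_{-\infty}^{c}\meas\{u<t\}\,dt$ and using the discrete co-area identity $\int_{\R}P(t)\,dt=|u|_{1,1,\mathcal{M}}$, where $P(t)$ is the discrete perimeter of the super-level set $\{u>t\}$, i.e.\ the sum of $\m(\sigma)$ over the edges $\sigma=K|L$ separating $\{u>t\}$ from its complement, it suffices to bound $\meas\{u>t\}$ for $t>c$ and $\meas\{u<t\}$ for $t<c$ by $C\,P(t)$. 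Here the defining property of the median enters: for $t>c$ one has $\meas\{u>t\}\le\meas(\Omega)/2$ and for $t<c$ one has $\meas\{u<t\}\le\meas(\Omega)/2$, so in each case the relevant quantity is the smaller of the two volumes $\meas\{u>t\}$ and $\meas\{u\le t\}$. The required bound is then exactly the relative isoperimetric inequality $\min(|E|,|\Omega\setminus E|)\le C(\Omega,d)\,\mathrm{Per}(E,\Omega)$ applied to $E=\{u>t\}$, valid because $\Omega$ is bounded, connected and polyhedral (equivalently, the $BV$ version of Proposition~\ref{propPW} for the median, see \cite{Z}). This $L^1$ median estimate is the genuine core of the result and the main obstacle; everything else is bookkeeping. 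Note that no mesh regularity is needed at this stage, consistently with $\xi^{(p-1)/p}=1$ for $p=1$.

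For $p>1$ I would apply the $p=1$ estimate to the transformed function $v\in X(\mathcal{T})$ defined by $v_K=\Phi(u_K)$ with $\Phi(s)=|s-c|^{p-1}(s-c)$. Since $\Phi$ is nondecreasing, continuous and vanishes exactly at $c$, one checks $\{v>0\}=\{u>c\}$ and $\{v<0\}=\{u<c\}$, so $0\in\med(v)$; moreover $\|v\|_{0,1}=\sum_K\m(K)|u_K-c|^p=\|u-c\|_{0,p}^p$. Step~1 gives $\|u-c\|_{0,p}^p\le C\sum_{\sigma=K|L}\m(\sigma)|v_K-v_L|$, and the elementary bound $|\Phi(a)-\Phi(b)|\le p\,(|a-c|^{p-1}+|b-c|^{p-1})|a-b|$ turns the right-hand side into $C\,p\sum_\sigma\m(\sigma)(|u_K-c|^{p-1}+|u_L-c|^{p-1})|u_K-u_L|$. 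Splitting the weight as $\m(\sigma)=\big(\m(\sigma)/d_\sigma^{p-1}\big)^{1/p}\,\big(\m(\sigma)d_\sigma\big)^{1/p'}$ with $p'=p/(p-1)$ and applying H\"older produces the factor $|u|_{1,p,\mathcal{M}}$ times $\big(\sum_\sigma\m(\sigma)d_\sigma(|u_K-c|^p+|u_L-c|^p)\big)^{1/p'}$, where I used $(p-1)p'=p$. Reorganizing this last sum per cell and invoking the mesh geometry, namely $d_\sigma\le d_{K,\sigma}/\xi$ from \eqref{cmesh} together with the identity $\sum_{\sigma\in\mathcal{E}_K}\m(\sigma)d_{K,\sigma}=d\,\m(K)$, bounds it by $(d/\xi)\|u-c\|_{0,p}^p$ up to a dimensional constant. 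Collecting the constants and dividing by $\|u-c\|_{0,p}^{p-1}=\|u-c\|_{0,p}^{p/p'}$ (the inequality being trivial when this vanishes) yields $\|u-c\|_{0,p}\le C\,\xi^{-(p-1)/p}|u|_{1,p,\mathcal{M}}$, which is \eqref{PW} with the stated dependence of the constant.
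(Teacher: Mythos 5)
Your proposal is correct, and its second step coincides with the paper's own proof (given in the Appendix): the paper likewise sets $v_K=(u_K-m)|u_K-m|^{p-1}$, notes $0\in\med(v)$, reduces everything to an $L^1$ bound on $v$, controls $|v_K-v_L|\le p\,|u_K-u_L|\left(|u_K-m|^{p-1}+|u_L-m|^{p-1}\right)$ by the mean value theorem, and concludes with the same H\"older splitting, the bound $\sum_{\sigma\in\E_K}\m(\sigma)d_\sigma\le\frac{1}{\xi}\sum_{\sigma\in\E_K}\m(\sigma)d(x_K,\sigma)=\frac{d}{\xi}\m(K)$ coming from \eqref{cmesh}, and division by $\|u-m\|_{0,p}^{p-1}$. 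The genuine difference is how the $L^1$ core is justified. The paper treats $v\in X(\T)$ as a $BV$ function with $TV_\Omega(v)=|v|_{1,1,\M}$ and directly invokes the continuous median embedding $BV(\Omega)\hookrightarrow L^{d/(d-1)}(\Omega)$ (Theorem \ref{TBV}, quoted from \cite{Z}), followed by the inclusion $L^{d/(d-1)}(\Omega)\subset L^1(\Omega)$; you instead prove the $L^1$ median inequality from the layer-cake formula, the discrete co-area identity $\int_\R P(t)\,\mathrm{d}t=|u|_{1,1,\M}$, and the relative isoperimetric inequality $\min\left(\meas(E),\meas(\Omega\setminus E)\right)\le C(\Omega,d)\operatorname{Per}(E;\Omega)$. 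These ingredients are equivalent in substance: your isoperimetric inequality is exactly Theorem \ref{TBV} applied to characteristic functions (with the exponent $(d-1)/d$ relaxed to $1$, harmless on a bounded domain), and conversely the embedding is classically proved via co-area. So neither route is deeper than the other; what yours buys is that the only continuous input is the isoperimetric inequality rather than the full $L^{d/(d-1)}$ embedding, and it makes explicit the observation, implicit in the paper, that the mesh regularity \eqref{cmesh} plays no role when $p=1$. Two glosses in your write-up are harmless: $\operatorname{Per}(\{u<t\};\Omega)$ coincides with $P(t)$ only for all but the finitely many values $t\in\{u_K\}$, which is all the $t$-integration requires; and your H\"older step silently absorbs the constant from $(a^{p-1}+b^{p-1})^{p/(p-1)}\le 2^{1/(p-1)}(a^p+b^p)$, exactly as the paper does.
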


 \begin{thm}[Discrete Rellich's theorem]\label{compact}
Let $(\M_m)_{m\ge1}$ be a sequence of admissible meshes satisfying \eqref{cmesh}
and such that $h_{\M_m}\rightarrow0$ as $m \rightarrow \infty$. If $v_{m} \in
X(\T_m)$ is such that $(|v_m|_{1,2,\M}+\|v_m\|_{0,2})$ is bounded, then
$(v_m)_{m\in \mathbb{N}}$ is relatively compact in $L^2(\Omega)$. Furthermore,
any limit in $L^2(\Omega)$ of a subsequence of $(v_m)_{m\in \mathbb{N}}$ belongs
to $H^1(\Omega)$.
\end{thm}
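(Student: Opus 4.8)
The plan is to prove the discrete Rellich theorem by following the classical Kolmogorov--Riesz--Fréchet compactness criterion in $L^2(\R^d)$, adapting the argument given in \cite{EGH2000} for the Dirichlet case to the present Neumann setting. First I would extend each $v_m$ by zero outside $\Omega$ (or, to be safe with boundary effects, work with translates only in directions that keep points inside $\Omega$ and treat a boundary strip separately) and denote the extension again by $v_m$. The goal is to establish the two hypotheses of the Kolmogorov criterion: uniform boundedness in $L^2$, which is given directly by the assumption that $(\|v_m\|_{0,2})$ is bounded, and uniform equicontinuity of translates, namely $\|v_m(\cdot+\eta)-v_m\|_{L^2(\R^d)}\le \omega(|\eta|)$ with $\omega(|\eta|)\to0$ as $|\eta|\to0$, uniformly in $m$.

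The heart of the matter is the translation estimate. The standard discrete argument bounds $\int_{\R^d}|v_m(x+\eta)-v_m(x)|^2\dx$ by a sum over the interior edges $\sigma=K|L$ of the jumps $|v_{m,K}-v_{m,L}|^2$ weighted by geometric factors. The key inequality from \cite{EGH2000} reads, schematically,
\[
  \|v_m(\cdot+\eta)-v_m\|_{L^2(\R^d)}^2
  \le |\eta|\bigl(|\eta|+2h_{\M_m}\bigr)\,|v_m|_{1,2,\M_m}^2 .
\]
Since $|v_m|_{1,2,\M_m}$ is bounded by hypothesis and $h_{\M_m}\to0$, the right-hand side is controlled by $C|\eta|(|\eta|+1)$ uniformly in $m$ for $|\eta|$ small, giving the required equicontinuity of translates. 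The point requiring care here is exactly the extension by zero near $\partial\Omega$: a translate can move mass across the boundary, so I would, as in the cited reference, estimate the contribution of a boundary layer of width $|\eta|$ separately, again using the uniform $L^2$ bound and the mesh regularity \eqref{cmesh}. Once both Kolmogorov hypotheses hold, relative compactness of $(v_m)$ in $L^2(\Omega)$ follows immediately.

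For the second assertion---that any $L^2$-limit $v$ of a subsequence lies in $H^1(\Omega)$---I would reconstruct a discrete gradient and pass to the limit weakly. Concretely, define on each diamond cell associated to an edge $\sigma=K|L$ an approximate gradient $\nabla_{\M_m}v_m$ (essentially the difference quotient $(v_{m,L}-v_{m,K})/d_\sigma$ times $\n$, suitably scaled), and observe that $\|\nabla_{\M_m}v_m\|_{L^2(\Omega)^d}$ is controlled by $|v_m|_{1,2,\M_m}$, hence bounded. Up to a further subsequence, $\nabla_{\M_m}v_m$ converges weakly in $L^2(\Omega)^d$ to some $G$. The final step is to identify $G=\nabla v$ in the distributional sense: for a fixed test function $\varphi\in C_c^\infty(\Omega)$, I would write the discrete duality between $\nabla_{\M_m}v_m$ and $\varphi$, perform a discrete integration by parts so that the differences of $v_m$ are transferred onto consistent differences of $\varphi$, and then pass to the limit using the strong convergence $v_m\to v$ in $L^2$ together with $h_{\M_m}\to0$. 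The consistency error between the discrete and continuous derivatives of the smooth $\varphi$ vanishes as the mesh size tends to zero, yielding $\int_\Omega G\cdot\varphi\dx=-\int_\Omega v\,\diw\varphi\dx$, so $G=\nabla v\in L^2(\Omega)^d$ and therefore $v\in H^1(\Omega)$.

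The main obstacle I anticipate is the boundary handling in the translation estimate: unlike the Dirichlet case where zero extension is natural and the semi-norm already encodes the boundary condition, here there is no homogeneous boundary data, so the boundary-layer term must be dominated purely by the uniform $L^2$ bound and the mesh regularity constant $\xi$, and one must be scrupulous that the extension procedure does not spuriously inflate the discrete semi-norm. The identification $G=\nabla v$ is comparatively routine once the weak limit is extracted, being a standard consistency-plus-weak-convergence argument.
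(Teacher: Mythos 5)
The paper never proves Theorem \ref{compact}: it is recalled as a known result of discrete functional analysis (the Kolmogorov-type argument you outline is precisely how it is established in \cite{EGH2000} and adapted in \cite{CHD11}), so your proposal has to stand on its own merits. Its architecture is the right one: interior translate estimates plus the Kolmogorov--Riesz--Fr\'echet criterion for the compactness claim, then identification of the weak limit of the diamond-cell gradient for the $H^1$ claim. The second part is essentially correct as written: $\|\nabla_{\M_m}v_m\|_{L^2(\Omega)^d}^2=d\,|v_m|_{1,2,\M_m}^2$ is bounded, a weak limit $G$ exists along a subsequence, and discrete integration by parts against $\varphi\in C_c^\infty(\Omega)$, with consistency errors of order $h_{\M_m}$, yields $G=\nabla v$. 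Note that you could not instead have invoked Lemma \ref{WCG}, since that lemma presupposes $v\in W^{1,\alpha}(\Omega)$, which is exactly what is being proved; your direct duality argument correctly avoids this circularity.

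The genuine gap is in the boundary-layer step, precisely the point you flag as delicate. You assert twice that the contribution of the layer of width $|\eta|$ near $\partial\Omega$ can be "dominated purely by the uniform $L^2$ bound and the mesh regularity constant $\xi$". This is false: a uniform $L^2$ bound does not prevent concentration of $|v_m|^2$ in a shrinking neighborhood of $\partial\Omega$. Consider $v_m=\delta_m^{-1/2}\mathbf{1}_{\{d(x,\partial\Omega)<\delta_m\}}$, discretized on $\M_m$ with $\delta_m\to0$: then $\|v_m\|_{0,2}$ stays of order one while $\int_{\{d(x,\partial\Omega)<\delta_m\}}|v_m|^2\,\dx$ does not tend to zero, so neither the translate bound nor the tightness condition of the Kolmogorov criterion holds uniformly in $m$; such sequences are excluded only because their discrete seminorm blows up (of order $(h_m\delta_m)^{-1}$). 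Hence the boundary control must consume the seminorm hypothesis, which your argument leaves idle at this step. Two standard repairs are available: (i) a discrete trace / thin-layer inequality, $\int_{\{d(x,\partial\Omega)<\delta\}}|v|^2\,\dx\le C\,\delta\,\big(\|v\|_{0,2}^2+|v|_{1,2,\M}^2\big)$, which is how \cite{EGH2000} treats the Neumann case; or (ii) bypass the boundary entirely: by the discrete Sobolev inequality \eqref{Sobolev}, $(v_m)$ is bounded in $L^q(\Omega)$ for some $q>2$, hence $(|v_m|^2)$ is equi-integrable, and the $L^2_{loc}(\Omega)$ compactness that your interior translate estimate does give upgrades to relative compactness in $L^2(\Omega)$ by Vitali's theorem. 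With either repair the proof is complete; without one of them, the passage from local to global $L^2$ compactness does not go through.
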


Let us now define a discrete finite volume gradient introduced equivalently in [\cite{zbMATH01970883}, Lemma 4.4], [\cite{zbMATH05490416}, Lemma 6.5] or [\cite{zbMATH02027732}, Definition 2].
 \begin{defin}[Discrete finite volume gradient]
For $K\in\mathcal{M}$ and $\sigma\in\mathcal{E}(K)$, we define the volume $D_{K,\sigma}$ as the cone of basis $\sigma$ and of opposite vertex $x_K$.Then, we define the "diamond-cell" $D_{\sigma}$ (see Figure \ref{fig:D_sigma}) by
\begin{align*}
D_{\sigma} &= D_{K,\sigma}\cup D_{L,\sigma}\ &&\textnormal{if}\ \sigma=K|L\in\mathcal{E}_{int}, \\
D_{\sigma} &= D_{K,\sigma}\ &&\textnormal{if}\ \sigma\in\mathcal{E}_{ext}\cap\mathcal{E}_{K},
\end{align*}
and $$\m(D_\sigma)=\frac{1}{d}d_\sigma \m(\sigma).$$

The approximate gradient $\nabla_{\mathcal{M}}u$ of a function $u \in X(\mathcal{T})$ is defined as a piece-wise constant function over each diamond cell and given by
\begin{align*}
&\forall \sigma\in\mathcal{E}_{int},\ \sigma=K|L,\ && \nabla_{\mathcal{M}}u(\xx)=d\frac{u_L-u_K}{d_\sigma}\boldsymbol{
  \mathrm{n}}_{K,\sigma},\ & \forall \xx\in D_{\sigma}, \\
&\forall \sigma\in\mathcal{E}_{ext}\cap\mathcal{E}_{K}, && \nabla_{\mathcal{M}}u(\xx)=0,\ & \forall \xx\in D_{\sigma}.
\end{align*}
\end{defin}

 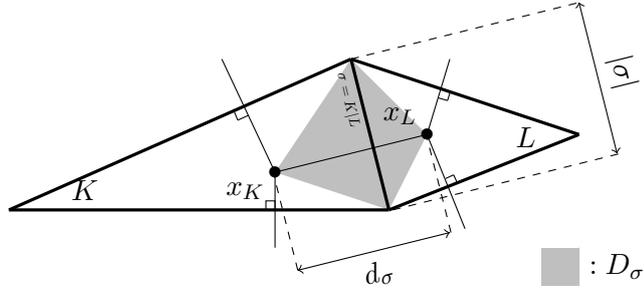
\begin{figure}[!h]
	\begin{center}
		\begin{tikzpicture}
		\fill [color=gray!50] (3.5,0.5) -- (4.5,2) -- (5.5,1) -- (5,0) -- cycle;
		\draw [very thick] (0,0) -- (5,0);
		\draw [very thick] (0,0) -- (4.5,2);
		\draw [very thick] (5,0) -- (4.5,2) node [near end,below,sloped,scale=0.5] {$\sigma=K|L$};
		\draw [very thick] (4.5,2) -- (7.5,1);
		\draw [very thick] (5,0) -- (7.5,1);
		\draw (3.5,0.5) node {$\bullet$};
		\draw (3.5,0.5) node[below left]{$x_K$};
		\draw (5.5,1) node {$\bullet$};
		\draw (5.5,1) node[above left]{$x_L$};
		\draw (3.5,0.5) -- (5.5,1);
		\draw (1,0) node[above]{$K$};
		\draw (6.8,0.7) node[above]{$L$};
		\draw (5.5,1) -- (5.8,2);
		\draw (5.5,1) -- (6,-0.25);
		\draw (3.5,0.5) -- (2.8,2);
		\draw (3.5,0.5) -- (3.5,-0.5);
		\draw (3.5,0) -- (3.5,0.12) -- (3.38,0.12) -- (3.38,0) -- cycle;
		\draw (2.95,1.335) -- (3.01,1.19);
		\draw (3.01,1.19) -- (3.154,1.256);
		\draw (5.8,1.55) -- (5.77,1.45);
		\draw (5.77,1.45) -- (5.65,1.485);
		\draw (5.9,0.356) -- (5.85,0.465);
		\draw (5.85,0.465) -- (5.746,0.417);
		\fill [color=gray!50] (7,-1) -- (7.5,-1) -- (7.5,-0.5) -- (7,-0.5) -- cycle;
		\draw [color=gray!50] (7.5,-1) -- (7.5,-0.5) node [color=black,midway,right] {$:D_{\sigma}$};
	    \begin{scope}[xshift=0.3cm,yshift=-1.3cm]
	    \draw [<->] (3.5,0.5) -- (5.5,1) node[midway,below,sloped] {$d_{\sigma}$};
	    \end{scope}
	    \begin{scope}[xshift=3cm,yshift=0.75cm]
	    \draw [<->] (5,0) -- (4.5,2) node[midway,above,sloped] {$|\sigma|$};
	    \end{scope}
	    \begin{scope}[xshift=0.55cm,yshift=-0.8cm,scale=0.65]
	    \draw [dashed] (5,0) -- (4.5,2);
	    \end{scope}
	    \begin{scope}[xshift=2.565cm,yshift=-0.3cm,scale=0.65]
	    \draw [dashed] (5,0) -- (4.5,2);
	    \end{scope}
	    \begin{scope}[xshift=-0.75cm,yshift=1.25cm,scale=1.5]
	    \draw [dashed] (3.5,0.5) -- (5.5,1);
	    \end{scope}
	    \begin{scope}[xshift=-0.2cm,yshift=-0.75cm,scale=1.5]
	    \draw [dashed] (3.5,0.5) -- (5.5,1);
	    \end{scope}
		\end{tikzpicture}
		\caption{The diamond $D_{\sigma}$}
		\label{fig:D_sigma}
	\end{center}
\end{figure}

Let us then give convergence property of the discrete gradient (see e.g., in the
case of Dirichlet boundary condition, \cite{zbMATH01970883} and
\cite{zbMATH02027732} in $L^2$ context, and \cite{LL12} in the $L^1$ context).

\begin{lemma}[Weak convergence of the finite volume gradient]\label{WCG}
Let $(\M_m)_{m\ge1}$ be a sequence of admissible meshes satisfying \eqref{cmesh}
and such that $h_{\M_m}\rightarrow0$ as $m\rightarrow \infty$. Let $v_{m}\in X(\T_m)$ and let us
assume that there exists $\alpha\in[1,+\infty[$ and $C>0$ such that
$\|v_{m}\|_{1,\alpha,\M_m}\le C$, and that $v_{m}$ converges in
$L^1(\Omega)$ to $v\in W^{1,\alpha}(\Omega)$. Then $\nabla_{\M_m}v_{m}$
converges to $\nabla v$ weakly in $L^{\alpha}(\Omega)^d$.
\end{lemma}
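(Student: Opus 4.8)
Here is how I would approach Lemma \ref{WCG}. The plan is to argue by duality: first bound $\nabla_{\M_m}v_m$ in $L^{\alpha}(\Omega)^d$, extract a weak limit, and then identify this limit with $\nabla v$ by testing against smooth compactly supported vector fields and performing a discrete integration by parts. The starting point is the elementary identity tying the discrete gradient to the discrete seminorm: using the definition of $\nabla_\M$ together with $\m(D_\sigma)=\frac1d d_\sigma\m(\sigma)$, a direct computation gives
\[
  \|\nabla_\M u\|_{0,\alpha}^{\alpha}=\sum_{\substack{\sigma\in\E_{int}\\ \sigma=K|L}}\m(D_\sigma)\Big(d\,\frac{|u_L-u_K|}{d_\sigma}\Big)^{\alpha}=d^{\,\alpha-1}\,|u|_{1,\alpha,\M}^{\alpha},
\]
so that $\|\nabla_{\M_m}v_m\|_{0,\alpha}\le d^{(\alpha-1)/\alpha}C$ is bounded. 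For $\alpha>1$ the space $L^{\alpha}(\Omega)^d$ is reflexive, hence up to a subsequence $\nabla_{\M_m}v_m\rightharpoonup G$ weakly in $L^{\alpha}(\Omega)^d$, and the whole matter reduces to showing $G=\nabla v$.

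For the identification, fix $\boldsymbol{\varphi}\in C_c^{\infty}(\Omega)^d$. As $\nabla_\M v_m$ is constant on each diamond cell and vanishes on exterior diamonds, the measure identity again yields (the sum running over interior edges)
\[
  \int_\Omega\nabla_{\M_m}v_m\cdot\boldsymbol{\varphi}\dx=\sum_{\sigma=K|L}\big((v_m)_L-(v_m)_K\big)\,\m(\sigma)\,\n\cdot\overline{\boldsymbol{\varphi}}_{D_\sigma},
\]
where $\overline{\boldsymbol{\varphi}}_{D_\sigma}$ denotes the mean of $\boldsymbol{\varphi}$ over $D_\sigma$. Replacing $\overline{\boldsymbol{\varphi}}_{D_\sigma}$ by the edge mean $\boldsymbol{\varphi}_\sigma=\frac{1}{\m(\sigma)}\int_\sigma\boldsymbol{\varphi}$, the exact divergence theorem on each control volume, $\int_K\diw\boldsymbol{\varphi}\dx=\sum_{\sigma\in\E_K}\m(\sigma)\,\n\cdot\boldsymbol{\varphi}_\sigma$, combined with $\boldsymbol{\mathrm{n}}_{K,\sigma}=-\boldsymbol{\mathrm{n}}_{L,\sigma}$ and with the vanishing of the boundary terms (the support of $\boldsymbol{\varphi}$ avoids $\partial\Omega$ once $h_{\M_m}$ is small enough) lets me reorganize the edge sum into a sum over control volumes:
\[
  \sum_{\sigma=K|L}\big((v_m)_L-(v_m)_K\big)\,\m(\sigma)\,\n\cdot\boldsymbol{\varphi}_\sigma=-\int_\Omega v_m\,\diw\boldsymbol{\varphi}\dx.
\]
Since $v_m\to v$ in $L^1(\Omega)$ and $\diw\boldsymbol{\varphi}\in L^{\infty}(\Omega)$, this tends to $-\int_\Omega v\,\diw\boldsymbol{\varphi}\dx=\int_\Omega\nabla v\cdot\boldsymbol{\varphi}\dx$, the last equality being the continuous integration by parts allowed by $v\in W^{1,\alpha}(\Omega)$ and the compact support of $\boldsymbol{\varphi}$.

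It then remains to control the consistency error $R_m=\sum_{\sigma=K|L}\big((v_m)_L-(v_m)_K\big)\m(\sigma)\,\n\cdot(\overline{\boldsymbol{\varphi}}_{D_\sigma}-\boldsymbol{\varphi}_\sigma)$. As $\boldsymbol{\varphi}$ is Lipschitz and $\mathrm{diam}(D_\sigma)\le C h_{\M_m}$, one has $|\overline{\boldsymbol{\varphi}}_{D_\sigma}-\boldsymbol{\varphi}_\sigma|\le C_{\boldsymbol{\varphi}}h_{\M_m}$, and a Hölder inequality in the edge sum, using $\sum_{\sigma\in\E_{int}}d_\sigma\m(\sigma)=d\sum_\sigma\m(D_\sigma)\le d\,\meas(\Omega)$, gives
\[
  |R_m|\le C_{\boldsymbol{\varphi}}\,h_{\M_m}\,|v_m|_{1,\alpha,\M_m}\,\big(d\,\meas(\Omega)\big)^{1/\alpha'},
\]
with $\alpha'$ the conjugate exponent of $\alpha$; since $|v_m|_{1,\alpha,\M_m}\le C$ and $h_{\M_m}\to0$, we get $R_m\to0$. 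Collecting the two pieces yields $\int_\Omega G\cdot\boldsymbol{\varphi}\dx=\int_\Omega\nabla v\cdot\boldsymbol{\varphi}\dx$ for every $\boldsymbol{\varphi}\in C_c^{\infty}(\Omega)^d$, whence $G=\nabla v$ a.e.; as the limit is independent of the extracted subsequence, the full sequence $\nabla_{\M_m}v_m$ converges weakly to $\nabla v$ in $L^{\alpha}(\Omega)^d$.

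\emph{The main obstacle} is twofold. First, keeping the geometric consistency error uniform in the mesh is the genuine technical step, and it is precisely here that the diamond structure and the identity $\m(D_\sigma)=\frac1d d_\sigma\m(\sigma)$ do the work. Second, the borderline value $\alpha=1$ is delicate: $L^1(\Omega)^d$ is not reflexive, so the weak-compactness extraction is no longer automatic from the $L^1$ bound alone and must be supplemented by an equi-integrability argument (via the Dunford--Pettis theorem) before the same integration-by-parts identification can be invoked.
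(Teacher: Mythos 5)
Your proof is the standard discrete-duality argument, and it is worth noting at the outset that the paper itself does not prove Lemma \ref{WCG}: it only cites \cite{zbMATH01970883}, \cite{zbMATH02027732} and \cite{LL12}, where the argument runs exactly along your lines (gradient bound via $\|\nabla_{\M}u\|_{0,\alpha}^{\alpha}=d^{\alpha-1}|u|_{1,\alpha,\M}^{\alpha}$, extraction of a weak limit, identification by discrete integration by parts against $C^{\infty}_{c}(\Omega)^{d}$ fields, and an $O(h_{\M_m})$ consistency error controlled by H\"older and $\sum_{\sigma}\m(\sigma)d_{\sigma}=d\,\meas(\Omega)$). All of your computations check out, so for $\alpha\in\,]1,+\infty[$ your proof is complete; this covers every invocation of the lemma in the paper, which only uses $\alpha=2$ (for the truncations $T_{n}(u_{m})$ in Proposition \ref{EstTn}).

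The end-point $\alpha=1$, however, is a genuine gap, and not one that your suggested fix (Dunford--Pettis) can close: equi-integrability of $(\nabla_{\M_m}v_{m})_{m}$ does not follow from the hypotheses, and in fact the conclusion itself fails at $\alpha=1$. Take $\Omega=(0,1)^{2}$, the uniform square mesh of step $h=1/m$, and $v_{m}=1$ on the single column of cells $\{1/2-1/m<x_{1}<1/2\}$, $v_{m}=0$ elsewhere. Then $v_{m}\rightarrow v=0$ in $L^{1}(\Omega)$ with $v\in W^{1,1}(\Omega)$, and $\|v_{m}\|_{1,1,\M_m}\leq 1/m+2$, yet $\nabla_{\M_m}v_{m}$ equals $2m\,(1,0)$ on the diamonds of the left column of jump edges (all contained in $\{x_{1}<1/2\}$, of total measure $1/(2m)$) and $-2m\,(1,0)$ on the diamonds of the right column (symmetric about the line $\{x_{1}=1/2\}$). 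Testing against $g=(\mathds{1}_{\{x_{1}<1/2\}}-\mathds{1}_{\{x_{1}>1/2\}})\,(1,0)\in L^{\infty}(\Omega)^{2}$ gives $\int_{\Omega}\nabla_{\M_m}v_{m}\cdot g\,\dx=1$ for every $m$, so $\nabla_{\M_m}v_{m}$ does not converge to $\nabla v=0$ weakly in $L^{1}(\Omega)^{2}$; only weak-$*$ convergence in the sense of measures survives (the two concentrating lines cancel against continuous test functions, which is exactly why your identification step cannot detect the obstruction). So the statement should either be restricted to $\alpha>1$ or supplemented by an equi-integrability hypothesis at $\alpha=1$; as said above, nothing downstream in the paper is affected since only $\alpha=2$ is used.
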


We now define the finite volume scheme. Let $\M$ be an admissible mesh in the
sense of definition \ref{admi}. 
For $K\in \T$ and $\sigma \in \E_{K}$, we define $\vv_{K,\sigma}$ by

\begin{equation}\label{v}
 \vv_{K,\sigma}=\frac{1}{\m(D_{\sigma})}\int_{D_{\sigma}}\vv\cdot \n\, \mathrm{d}x.
\end{equation}
We consider the following finite volume scheme for \eqref{Pb}
\begin{equation}\label{scheme}
\forall K\in\T,\ \sum_{\sigma\in\E_{int}(K)}\frac{\mathrm{m}
  (\sigma)}{d_{\sigma}}\lambda(u)_{\sigma}(u_K-u_L) +
\sum_{\sigma\in\E_{int}(K)}\m(\sigma) \vv_{K,\sigma} u_{\sigma,+}  = \int_K
f\,\mathrm{d}x,
\end{equation}
and
\begin{equation}\label{usigma}
  \forall \sigma=K|L\in\E_{int},\   u_{\sigma,+}
  =\left\{ \begin{aligned} & u_K & \textnormal{if}\
      \vv_{K,\sigma}\ge0,
      \\  & u_L \ & \textnormal{otherwise}.
      \end{aligned}\right.
\end{equation}
We denote $u_{\sigma,-}$ the downstream choice of $u$, i.e. $u_{\sigma,-}$ is such that $\{u_{\sigma,+},u_{\sigma,-}\}=\{u_K,u_L\}$, $\forall\sigma\in \E_{int}$.

Finally,
\begin{align}\label{usigma2}
\forall \sigma=K|L\in\E_{int},\ \min[\lambda(u_K),\lambda(u_L)] \le \lambda(u)_{\sigma}\le \max[\lambda(u_K),\lambda(u_L)],
\end{align}
where $\lambda(u)_{\sigma}$ is for example the mean value of $\lambda(u_K)$ and $\lambda(u_L)$ if $\sigma\in\E_{int}$.

\section{Main results}

Our main results on the finite volume scheme are the following.  The first one states that there exists at least one solution to the scheme. It is a generalization of Theorem 2.5 in \cite{CHD11} in the context of a quasilinear problem with a median value constraint instead of a mean value constraint. The second one gives the convergence of this solution to the unique  renormalized solution  of the continuous problem with null median, as the size of the mesh tends to 0.
\begin{thm}[Existence of the solution of the scheme]\label{exist}
Let us assume that  \eqref{assump1}--\eqref{assump2} hold. Let $\M$ be an
admissible mesh in the sense of Definition \ref{admi} satisfying
\eqref{cmesh}. Then there exists a solution $u_{\T}=(u_K)_{K\in\M}$ to
\eqref{scheme}--\eqref{usigma2} having $\underline{\med}(u_\T)=0$.
\end{thm}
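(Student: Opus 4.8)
The plan is to read \eqref{scheme}--\eqref{usigma2} as a nonlinear system of $\card(\T)$ equations in the $\card(\T)$ unknowns $(u_K)_{K\in\T}$ and to solve it by a topological degree argument, the median constraint serving to remove the one-parameter indeterminacy of the scheme. First I would record the redundancy of the system. Summing \eqref{scheme} over all $K\in\T$, the diffusion contributions cancel by the antisymmetry of $u_K-u_L$, the upwind convection contributions cancel because $\vv_{K,\sigma}=-\vv_{L,\sigma}$ while $u_{\sigma,+}$ is common to the two cells adjacent to an interior edge, and the right-hand side vanishes by the compatibility condition \eqref{assump2}. Hence the residual map $\mathcal F\colon\R^{\card(\T)}\to\R^{\card(\T)}$, whose $K$-th component is the left minus the right side of \eqref{scheme}, actually takes values in the hyperplane $H_0=\{w:\sum_K w_K=0\}$: only $\card(\T)-1$ equations are independent, and the missing scalar condition is precisely $\medu(u_\T)=0$.

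Next I would establish an a priori bound on any solution satisfying $\medu(u_\T)=0$. The natural first attempt, testing \eqref{scheme} with $u_K$, bounds the diffusion below by $\mu|u_\T|_{1,2,\M}^2$ but, because $\vv$ is not divergence free, produces in the convective part a zeroth-order term of the form $\tfrac12\sum_K u_K^2\,b_K$ with $b_K=\sum_{\sigma\in\E_{int}(K)}\m(\sigma)\vv_{K,\sigma}$, of indefinite sign, which cannot be absorbed into the diffusion; thus the energy pairing is not coercive. Instead I would derive the bound through truncation (renormalized) test functions in the spirit of the estimates of Section~4: testing with $T_n(u_\T)$ and exploiting the structure of the upwind flux controls $|T_n(u_\T)|_{1,2,\M}$ and the decay of $\meas\{|u_\T|>n\}$ with the $L^p$-norm of $\vv$, the discrete Sobolev inequality \eqref{Sobolev} and, crucially, the discrete Poincaré–Wirtinger median inequality of Proposition~\ref{prop2.9}. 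The latter applies because $\medu(u_\T)=0$ forces $0\in\med(u_\T)$, so one may take $c=0$ and obtain $\|u_\T\|_{0,q}\le C\,|u_\T|_{1,2,\M}$; here only the median, not the mean, is controlled by the gradient seminorm. On a fixed mesh this yields $\|u_\T\|\le R$, and the same computation gives a bound uniform along the homotopy below.

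To produce a solution I would then run a degree argument on the median-zero section $\Sigma=\{u:\medu(u)=0\}$. Since $u\mapsto\medu(u)$, the lower quantile of the atomic measure $\sum_K\m(K)\delta_{u_K}$, is continuous and satisfies $\medu(u+c\mathbf 1)=\medu(u)+c$, the Euclidean projection $P$ onto $H_0$ restricts to a homeomorphism $\Sigma\to H_0$; thus $\Sigma$ is a legitimate $(\card(\T)-1)$-dimensional domain matching the target $H_0$. Transporting $\mathcal F|_\Sigma$ to $H_0$ through this homeomorphism, I would consider the homotopy obtained by replacing $\vv$ with $s\vv$, $s\in[0,1]$. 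The uniform a priori bound keeps all solutions in $B_R\cap\Sigma$, so no zero touches $\partial B_R\cap\Sigma$ and the degree is homotopy invariant. At $s=0$ the scheme reduces to the coercive nonlinear diffusion $-\diw(\lambda(u)\nabla u)=f$ with Neumann conditions, for which the energy pairing is coercive on $\Sigma$ (again by Proposition~\ref{prop2.9}) and which has a unique median-zero solution of degree $\pm1$. Homotopy invariance then gives nonzero degree at $s=1$, hence a zero of $\mathcal F$ in $\Sigma$, i.e. a solution $u_\T$ of \eqref{scheme}--\eqref{usigma2} with $\medu(u_\T)=0$.

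The hard part is twofold. The principal difficulty is the a priori estimate for the noncoercive convective term: unlike the coercive situation treated by the straightforward Brouwer–coercivity lemma, here one cannot test with $u_\T$, and the bound must instead rest on the truncation/renormalized machinery adapted to the scheme, which is exactly the reason Section~4 is required as input to this existence proof. The second, more structural, difficulty is the median normalization itself: because $\lambda$ is not constant the scheme is genuinely not translation invariant, so a solution cannot simply be shifted to make its median vanish, and the constraint must be built into the solution set. This forces me to verify that $\medu$ is continuous on $X(\T)$ and that $\Sigma$ is a well-behaved section over $H_0$, and it is at this point that the replacement of the mean value by the median—together with Proposition~\ref{prop2.9}—is indispensable, since under an $L^1$ right-hand side the mean value need not be controlled by the gradient.
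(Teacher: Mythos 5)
Your route is genuinely different from the paper's. The paper freezes the nonlinearity (replacing $\lambda(u)_\sigma$ by $\lambda(\widetilde u)_\sigma$), solves the resulting \emph{linear} system via the kernel/image analysis borrowed from \cite{CHD11} ($\dim\ker(\mA)=1$ with a kernel vector $V$ of constant sign, $\operatorname{Im}(\mA)=$ the zero-sum hyperplane), selects the unique solution with $\medu=0$ by monotonicity of $\lambda\mapsto\medu(\overline U+\lambda V)$, and then applies the Brouwer fixed point theorem to the map $\widetilde u\mapsto u$, which is bounded thanks to the log-estimate of Proposition \ref{estlog} and continuous by uniqueness. Your degree-theoretic plan is coherent in its structural parts: conservativity of the diffusive and upwind fluxes does place the residual $\mathcal{F}(u)$ in $H_0$; $\medu$ is continuous and translation-equivariant on a fixed mesh, so $\Sigma=\{\medu=0\}$ is indeed a section homeomorphic to $H_0$; and the Section~4 estimates (Proposition \ref{estlog}, Corollary \ref{log}, Proposition \ref{prop2.9}) hold with $s\vv$ in place of $\vv$ with constants uniform in $s\in[0,1]$, which on a fixed mesh yields a uniform $L^\infty$ bound on median-zero solutions along the homotopy.

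The gap is at the endpoint $s=0$, which anchors the whole degree computation. You assert that the nonlinear diffusion scheme ``has a unique median-zero solution of degree $\pm1$''. First, uniqueness is not established: in the continuous setting it comes from the Kirchhoff change of unknown $w=\widetilde\lambda(u)$, and this does \emph{not} transfer to the scheme, because the discrete flux $\lambda(u)_\sigma(u_K-u_L)$ with $\lambda(u)_\sigma$ given by \eqref{usigma2} (e.g.\ an arithmetic mean) is not of the form $\widetilde\lambda(u_K)-\widetilde\lambda(u_L)$. Second, and more importantly, uniqueness of a zero does not imply degree $\pm1$ (in one dimension $x\mapsto x^2$ has a unique zero of degree $0$), so the implication you rely on is invalid as stated. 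The argument can be repaired using the coercivity you allude to, but it must be applied to the \emph{transported} map: since $\mathcal{F}_0(u)\cdot\mathbf 1=0$ by conservativity, the map $G_0(w)=\mathcal{F}_0\bigl(w-\medu(w)\mathbf 1\bigr)$ satisfies $G_0(w)\cdot w=\mathcal{F}_0(u)\cdot u\ge \mu\,|w|_{1,2,\M}^2-\|f\|_{L^1(\Omega)}\|u\|_{L^\infty}$, and since $\|u\|_{L^\infty}\le 2\|w\|_{L^\infty}$ (the median lies between the extreme values) and $|\cdot|_{1,2,\M}$ is a norm on $H_0$ (the mesh adjacency graph is connected), all norms on the finite-dimensional space $H_0$ are equivalent and hence $G_0(w)\cdot w>0$ on a sphere of large radius; the homotopy $tG_0+(1-t)\mathrm{id}$ then gives $\deg(G_0,B_R\cap H_0,0)=1$. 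Alternatively, one can add a second homotopy $t\lambda+(1-t)\mu$ ending at the symmetric linear scheme, whose transported map is the linear isomorphism $\mA_0|_{H_0}$ of degree $\pm1$. Without one of these repairs the proof is incomplete at exactly the step that makes the degree nonzero.
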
{}

\begin{thm}[Convergence of the solution of the scheme]\label{conv}Let
  $(\M_m)_{m\geq1}$ be a sequence of admissible meshes in the sense of
  Definition \ref{admi}, which satisfy \eqref{cmesh} and such that $h_{\M_m}$
  goes to $0$ as $m\to\infty$. Let $u_{m}=(u_K^{m})_{K\in\T_m}\in X(\T_{m})$ be a solution of
  \eqref{scheme} such that $\underline{\med}(u_m)=0$.  Then $u_m$ converges
 to  the unique renormalized solution $u$ of \eqref{Pb} having
  $\med(u)=0$, in the sense that
  \begin{gather*}
    \text{$u_m$ converges to $u$ a.e. in $\Omega$,} \\
    \forall n\in\N, \ \nabla_{\M_m}T_n(u_m) \text{ converges
      to $\nabla T_{n}(u)$, weakly in }
       (L^{2}(\Omega))^{d},
\end{gather*}
as $m\rightarrow \infty$.
\end{thm}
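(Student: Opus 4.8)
The plan is to establish convergence of the finite volume scheme to the renormalized solution through the standard three-step strategy of (i) a priori estimates, (ii) compactness and passage to the limit, and (iii) identification of the limit. The compatibility condition $\int_\Omega f=0$ and the null-median normalization are what make the noncoercive problem well-posed, so I would first derive uniform estimates on the discrete solutions $u_m$. Taking $T_n(u_m)$ as a test function in the scheme \eqref{scheme} (i.e. multiplying the equation over $K$ by $T_n(u_K)$ and summing over $K\in\T_m$), the diffusive part produces a term controlled from below by $\mu\,|T_n(u_m)|_{1,2,\M_m}^2$ after discrete integration by parts, while the convective term must be handled carefully since $\vv$ is merely in $L^p$. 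The key uniform estimate I would aim for is of the form $|T_n(u_m)|_{1,2,\M_m}^2 \le C n$, with $C$ independent of $m$ and $n$; this is the discrete analogue of the renormalized-solution estimate \eqref{def2}. The null-median hypothesis $\medu(u_m)=0$ then lets me invoke the discrete Poincaré–Wirtinger median inequality of Proposition \ref{prop2.9} to bound $\|T_n(u_m)\|_{0,2}$ in terms of $|T_n(u_m)|_{1,2,\M_m}$, closing the estimate without any mean-value control.

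Next I would extract compactness. From the bound $\|T_n(u_m)\|_{1,2,\M_m}\le C\sqrt n$, the discrete Rellich theorem (Theorem \ref{compact}) gives, for each fixed $n$, a subsequence along which $T_n(u_m)$ converges in $L^2(\Omega)$ to some $w_n\in H^1(\Omega)$. A diagonal argument over $n\in\N$ produces a single subsequence working for all truncation levels simultaneously. The standard step is then to show that the $w_n$ are consistent truncations of a single measurable function $u$, i.e. $w_n=T_n(u)$; this follows by controlling the measure of the level sets $\{|u_m|>n\}$ uniformly (a Marcinkiewicz-type estimate deduced from the $Cn$ bound), which forces $u_m$ to converge a.e. to a finite function $u$ with $T_n(u_m)\to T_n(u)$ in $L^2$. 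Once a.e. convergence is in hand, passing $\medu(u_m)=0$ to the limit gives $0\in\med(u)$, and by the consistency of the median on $H^1$ functions noted in the excerpt this pins down $\med(u)=0$. Lemma \ref{WCG} (weak convergence of the finite volume gradient) then yields $\nabla_{\M_m}T_n(u_m)\rightharpoonup \nabla T_n(u)$ weakly in $(L^2(\Omega))^d$, which is the second convergence asserted in the theorem.

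The heart of the proof, and the step I expect to be the main obstacle, is identifying the limit $u$ as the renormalized solution, i.e. recovering equation \eqref{def3} for every admissible $S$ and $\varphi$. Here I would fix $S\in W^{1,\infty}(\R)$ with compact support and a smooth test function $\varphi$, use $S(u_K)\varphi(x_K)$ (or an appropriate interpolant) as a discrete test function in \eqref{scheme}, and reorganize the sums by discrete integration by parts over interior edges. The difficulty is threefold: the upwind convective flux $u_{\sigma,+}$ must be shown to converge to the correct continuous convection term, requiring a consistency analysis of $\vv_{K,\sigma}$ against $\vv$ and the control of the numerical diffusion introduced by upwinding (one typically splits $u_{\sigma,+}=\tfrac12(u_K+u_L)-\tfrac12\mathrm{sign}(\vv_{K,\sigma})(u_L-u_K)$ and shows the antisymmetric correction vanishes using the gradient bound and $h_{\M_m}\to0$); the nonlinear coefficient $\lambda(u)_\sigma$ must be shown to converge to $\lambda(u)$ using \eqref{usigma2}, the continuity of $\lambda$, and a.e. convergence; and the quadratic gradient term $S'(u)\lambda(u)\varphi|\nabla u|^2$ demands strong (not merely weak) convergence of $\nabla_{\M_m}T_n(u_m)$ on the support of $S'$. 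I would establish this strong convergence by a discrete energy argument, comparing $\int \lambda(u)_\sigma|\nabla_{\M_m}T_n(u_m)|^2$ with the limiting energy and using weak lower semicontinuity together with the convergence of the right-hand side; this is the delicate passage where the renormalized framework and the $L^1$ character of $f$ interact, and where the error term $\tfrac1n\int_{\{|u|<n\}}\lambda(u)|\nabla u|^2$ from \eqref{def2} is controlled so that the remainder in the discrete renormalized formulation tends to zero as first $m\to\infty$ and then $n\to\infty$.
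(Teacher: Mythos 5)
Your steps (i)--(ii) are essentially the paper's Propositions \ref{estlog}--\ref{EstTn}: truncate estimates, discrete Rellich plus a diagonal argument, Cauchy-in-measure, Fatou for the median, and Lemma \ref{WCG} for the weak gradient convergence. But two genuine gaps remain. First, your ``key uniform estimate'' $|T_n(u_m)|_{1,2,\M_m}^2\le Cn$ is both mis-characterized and not derivable as you sketch it. It is not the discrete analogue of \eqref{def2}: that condition is a \emph{vanishing} statement, and an $O(n)$ bound only makes $\tfrac1n\times(\text{energy})$ bounded, never small. What the convergence proof actually needs is the double-limit decay of Proposition \ref{propestdiscrete},
\[
\lim_{n\to\infty}\varlimsup_{h_{\M_m}\to0}\ \frac1n\sum_{\sigma\in\E_{int}}\frac{\m(\sigma)}{d_\sigma}\lambda(u_m)_\sigma(u_K^m-u_L^m)\bigl(T_n(u_K^m)-T_n(u_L^m)\bigr)=0,
\]
together with its convective counterpart \eqref{corenergiediscrete}; every error term in the identification step must \emph{vanish} as $n\to\infty$. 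Moreover, absorbing the convection with $\vv$ merely in $L^p$ requires smallness of $\|\vv\|_{L^p(E_r)}$ on the set $E_r$ where $|u_m|\ge r$, uniformly in $m$; this level-set control is Corollary \ref{log}, which comes from a separate logarithmic estimate (Proposition \ref{estlog}, testing with $\int_0^s dt/(1+|t|)^2$), not from the $T_n$ estimate. Your plan deduces level-set control \emph{from} the $Cn$ bound while the $Cn$-type bound needs level-set control: the log-estimate is the missing ingredient that breaks this circularity, and you never invoke it. The decay itself then also uses that the limit $u$ is finite a.e.\ so that $\int_\Omega f\,T_n(u)/n\,\mathrm{d}x\to0$ by dominated convergence.

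Second, and this is the heart of the matter: your identification step hinges on strong convergence of $\nabla_{\M_m}T_n(u_m)$ to handle the quadratic term $\int S'(u)\lambda(u)\varphi\,\nabla u\cdot\nabla u$, and you only gesture at how to prove it (``compare energies, weak lower semicontinuity''). That is precisely the step your plan does not deliver, and it is genuinely hard in the discrete setting, where no exact chain rule is available. The paper avoids strong convergence altogether by a two-scale use of cutoffs: discretely it tests with $\varphi(x_K)S_n(u_K^m)$, where $S_n$ is the specific cutoff with $|S_n'|\le 1/n$, so that the discrete quadratic term $\sum\frac{\m(\sigma)}{d_\sigma}\lambda(u_m)_\sigma\varphi(x_L)(u_K^m-u_L^m)(S_n(u_K^m)-S_n(u_L^m))$ is dominated by $\tfrac1n\times(\text{energy of }T_{2n})$ and is thus an error $\omega(n)$ by the decay above; only \emph{weak} convergences (Lemma \ref{lemme6.1} and \eqref{k}) are needed for the surviving terms, plus Corollary \ref{corol1} to kill a purely discrete term in which $u_K^m$ is truncated but $u_L^m$ is not --- a term with no continuous analogue that your proposal does not anticipate. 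This yields a limit equation \eqref{limh_T} that is \emph{linear} in $\nabla u$, extended by density to all $\varphi\in H^1(\Omega)\cap L^\infty(\Omega)$, with error $\omega(n)$. The quadratic term of \eqref{def3} is then generated at the \emph{continuous} level by substituting $\varphi=S(u)\psi$ (chain rule in $H^1$) and letting $n\to\infty$, using $S_n(u)S(u)=S(u)$ and $S_n(u)S'(u)=S'(u)$ for $n$ large; and \eqref{def2} for $u$ follows from \eqref{energiediscrete} by weak lower semicontinuity. So your overall architecture is sound, but as written the proof would stall exactly at the renormalized equation; the missing idea is to renormalize discretely with $S_n$ and only introduce the general $S$ after the limit $h_{\M_m}\to0$ has been taken.
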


 \begin{remark}
As explained in Introduction we choose in the present paper a constraint on the
median value instead of the mean value. It allows one to mix the techniques
developed in \cite{BGM1} and the finite volume. Observe that the median is an
appropriate choice in \cite{ACMM,BGM1} to deal with nonlinear elliptic equations
with $L^1$ data and Neumann boundary conditions since we cannot expect to have a
solution $u$ (in the sense of distribution or in the renormalized sense) of
$-\Delta_p u=f$ with $p$ closed to $1$ such that the solution belongs to
$L^1(\Omega)$. However under the restriction
$p>2-1/N$ and using the Boccardo-Gallou\"et estimates it is possible to solve
$-\Delta_p u=f$ with $f$ in $L^1$ and Neumann boundary conditions in the sense
of distributions with a mean value equal to zero, see \cite{Prignet97}.
As far as equation \eqref{Pb} is concerned a natural question is to solve its
and to approximate its with $\int_\Omega u dx=0$ and not $\med(u)=0$. To our
knowledge the continuous case is not dealt in the literature. Starting from an
approximate problem the difficulties are similar in passing to the limit in the
continuous case and in the discrete case : the crucial steps are the a priori
estimates stated  in Section 4.  Since we cannot give all the details of a possible proof
we refer to \cite{mirella}.

\end{remark}

 \section{A priori estimates}


This section is devoted to derive {\em a priori} estimates of the solution of
the scheme \eqref{scheme}, which are crucial to extract subsequences using
compactness results and then to pass to the limit in the scheme. Let us observe
that we adapt the strategy developed in \cite{BGM1} for the continuous problem
\eqref{Pb1} with Neumann boundary conditions to the discrete case and that we
use the techniques developed in \cite{CHD11} for the approximation of the
solution to problem \eqref{Pb1} with a more regular data and Neumann boundary
conditions and the ones of \cite{DGH03} and \cite{leclavier} which study the
approximation of equations with $L^1$ (or measure data) with Dirichlet boundary
conditions.

\begin{prop}[Estimate on $\ln(1+|u_\M|)$ with $\underline{\med}(u_\M)=0$]\label{estlog}
Let $\M$ be an admissible mesh satisfying \eqref{cmesh}. If
$u_{\M}=(u_K)_{K\in\T}$ is a solution to (\ref{scheme}), such that $\underline{\med}(u_\M)=0$, then
\begin{equation} \label{estlogeq}
\| \ln (1 + |u_{\M}|)\|_{1,2,\M}^2 \le C\left( 2\|f\|_{L^1(\Omega)} + d
  |\Omega|^{\frac{p-2}{p}}\, \|\,\vv\,\|^2_{\left(L^p(\Omega)\right)^d}\right),
\end{equation}
where 
$C=C(\Omega,\mu, \beta, p,\xi)$ is a positive constant.
\end{prop}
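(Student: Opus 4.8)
The plan is to test the scheme \eqref{scheme} with a bounded renormalization of the unknown and to split the resulting identity into a diffusive part that controls the logarithmic semi-norm from below, a right-hand side controlled by $\|f\|_{L^1(\Omega)}$, and a convective part that is the main difficulty. Concretely, I would multiply the equation for the cell $K$ by $\Psi(u_K)$, where $\Psi(s)=s/(1+|s|)$ is bounded by $1$, and sum over $K\in\T$. I also introduce $G(s)=\sign(s)\ln(1+|s|)$, so that $|G(u_\M)|=\ln(1+|u_\M|)$ pointwise; since $G$ is odd and increasing, $\medu(G(u_\M))=G(\medu(u_\M))=0$, which is exactly what is needed to apply Proposition \ref{prop2.9} at the end.

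For the diffusive contribution, a discrete integration by parts (reordering the double sum by interior edges) produces $\sum_{\sigma=K|L}\frac{\m(\sigma)}{d_\sigma}\lambda(u)_\sigma(u_K-u_L)(\Psi(u_K)-\Psi(u_L))$. Using $\lambda(u)_\sigma\ge\mu$ together with the elementary inequality $(a-b)(\Psi(a)-\Psi(b))\ge(G(a)-G(b))^2$ — which is Cauchy--Schwarz applied to $G(a)-G(b)=\int_b^a G'$ and $\Psi(a)-\Psi(b)=\int_b^a (G')^2$, since $\Psi'=(G')^2$ — this term is bounded below by $\mu\,|G(u_\M)|_{1,2,\M}^2$. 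The right-hand side $\sum_K\Psi(u_K)\int_K f$ is bounded by $\|f\|_{L^1(\Omega)}$ because $|\Psi|\le1$.

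The delicate point is the convective term $\sum_{\sigma=K|L}\m(\sigma)\vv_{K,\sigma}\,u_{\sigma,+}(\Psi(u_K)-\Psi(u_L))$. A naive edgewise bound $|u_{\sigma,+}|\,|\Psi(u_K)-\Psi(u_L)|\le|G(u_K)-G(u_L)|$ fails (take $u_L=0$, $u_K$ large and $u_{\sigma,+}=u_K$), reflecting that the discrete upwind value and the edge difference are evaluated at different places, unlike the continuous chain rule. To repair this I would introduce $\Theta$ with $\Theta'(s)=s\Psi'(s)$ and use $u_{\sigma,+}(\Psi(u_K)-\Psi(u_L))=\int_{u_L}^{u_K}u_{\sigma,+}\Psi'$ to get the exact splitting $u_{\sigma,+}(\Psi(u_K)-\Psi(u_L))=(\Theta(u_K)-\Theta(u_L))+\int_{u_L}^{u_K}(u_{\sigma,+}-t)\Psi'(t)\,dt$. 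A case analysis on the sign of $\vv_{K,\sigma}$ shows that each edge contribution of the second (numerical viscosity) piece, multiplied by $\m(\sigma)\vv_{K,\sigma}$, is nonnegative, so it can be moved to the coercive side and discarded. For the centered piece one uses the clean bound $|\Theta(u_K)-\Theta(u_L)|\le|G(u_K)-G(u_L)|$, valid for all signs because $|\Theta'(t)|=\frac{|t|}{(1+|t|)^2}\le\frac1{1+|t|}=|G'(t)|$ at the same point $t$. Rewriting $|G(u_K)-G(u_L)|$ and $|\m(\sigma)\vv_{K,\sigma}|$ in terms of $\nabla_\M G(u_\M)$ and $\int_{D_\sigma}|\vv|$ on the diamond cell, summing, and applying Hölder with exponents $p,p'$ together with $\|\nabla_\M G(u_\M)\|_{L^{p'}}\le|\Omega|^{(p-2)/(2p)}\|\nabla_\M G(u_\M)\|_{L^2}$ and $\|\nabla_\M G(u_\M)\|_{L^2}^2=d\,|G(u_\M)|_{1,2,\M}^2$, a Young inequality gives a bound $\frac\mu2|G(u_\M)|_{1,2,\M}^2+\frac{d}{2\mu}|\Omega|^{(p-2)/p}\|\vv\|_{(L^p(\Omega))^d}^2$. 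This is the step I expect to be the main obstacle, both for the sign of the viscosity term and for matching the exact powers of $|\Omega|$ and $d$.

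Collecting everything, the $\frac\mu2|G(u_\M)|_{1,2,\M}^2$ is absorbed into the diffusive lower bound, leaving $|G(u_\M)|_{1,2,\M}^2\le\frac2\mu\|f\|_{L^1(\Omega)}+\frac{d}{\mu^2}|\Omega|^{(p-2)/p}\|\vv\|_{(L^p(\Omega))^d}^2$. Finally, since $\|\ln(1+|u_\M|)\|_{0,2}=\|G(u_\M)\|_{0,2}$ and $0\in\med(G(u_\M))$, Proposition \ref{prop2.9} yields $\|G(u_\M)\|_{0,2}\le C\xi^{-1/2}|G(u_\M)|_{1,2,\M}$, while $|\ln(1+|u_\M|)|_{1,2,\M}\le|G(u_\M)|_{1,2,\M}$; adding these controls the full norm $\|\ln(1+|u_\M|)\|_{1,2,\M}^2$ by a constant multiple of $|G(u_\M)|_{1,2,\M}^2$, which gives \eqref{estlogeq}.
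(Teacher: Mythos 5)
Your proof is correct, but your treatment of the convective term takes a genuinely different route from the paper's. The paper (following \cite{DGH03}) keeps the upwind structure intact: it rewrites the convective contribution in terms of $u_{\sigma,+}$, $u_{\sigma,-}$, discards the edges where $u_{\sigma,+}(\varphi(u_{\sigma,-})-\varphi(u_{\sigma,+}))\le 0$ (the complement of the edge set $\A$ of \eqref{SA}), and on $\A$ invokes the pointwise inequality $|u_{\sigma,+}|^2|\varphi(u_{\sigma,-})-\varphi(u_{\sigma,+})|^2\le|u_{\sigma,-}-u_{\sigma,+}|\,|\varphi(u_{\sigma,-})-\varphi(u_{\sigma,+})|$ (Lemma 3.1 of \cite{DGH03}); after Cauchy--Schwarz, H\"older and Young this is absorbed into the coercive term kept in the product form $\sum\frac{\m(\sigma)}{d_\sigma}(u_K-u_L)(\varphi(u_K)-\varphi(u_L))$, and the inequality $(\ln(1+|x|)-\ln(1+|y|))^2\le(x-y)(\varphi(x)-\varphi(y))$ is used only at the very end. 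You instead perform the exact splitting upwind $=$ centered $+$ numerical viscosity with $\Theta'(s)=s\Psi'(s)$, check that the viscosity part is edgewise nonnegative (your four-case verification is right, and it is the discrete counterpart of the chain rule $u\,\vv\cdot\nabla\Psi(u)=\vv\cdot\nabla\Theta(u)$), and bound the centered flux by the $G$-increment via $|\Theta'(t)|\le G'(t)$; this converts everything into the seminorm $|G(u_\M)|_{1,2,\M}$ from the outset and avoids both the set $\A$ and the DGH03 lemma. The two routes yield the same estimate with the same dependence on $d$, $|\Omega|^{(p-2)/p}$ and $\|\vv\|_{(L^p(\Omega))^d}$; yours is closer to the continuous renormalized argument and more self-contained, while the paper's choice has the practical advantage that the set $\A$ and the associated manipulations are reused verbatim in the proofs of Propositions \ref{EstTn} and \ref{propestdiscrete}. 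A further small point in your favour: working with the odd function $G(s)=\sign(s)\ln(1+|s|)$ makes the application of Proposition \ref{prop2.9} in the last step rigorous, since $\medu(G(u_\M))=G(\medu(u_\M))=0$ holds exactly (the paper's phrase ``$\medu(\ln(1+u_\M))=0$'' only makes sense when read this way), and your reduction of $\|\ln(1+|u_\M|)\|_{1,2,\M}$ to a multiple of $|G(u_\M)|_{1,2,\M}$ is exactly what is needed to conclude.
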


\begin{proof}
A $log$-estimate was obtained in [Proposition 3.1, \cite{DGH03}] in the case of Dirichlet boundary conditions and $\vv \in (C(\bar{\Omega}))^d$. Since we deal with Neumann boundary conditions, $\vv \in (L^p(\Omega))^d$ and the specific choice of $\underline{\med}(u_\M)=0$, we will adapt the proof derived in \cite{DGH03} and explain the modifications.
As in \cite{DGH03}, let $\varphi(s)=\displaystyle \int_0^s\dfrac{dt}{(1+|t|)^2}$. Taking $\varphi(u_K)$ as a test function in the scheme \eqref{scheme} and reordering the sums yield
\begin{equation}\label{premiere}
    \sum_{\sigma\in\E_{int}}\frac{\m(\sigma)}{d_\sigma}\lambda(u)_\sigma(u_K-u_L)(\varphi(u_K)-\varphi(u_L))\leq \|f\|_{L^1(\Omega)}+\sum_{\sigma\in\E_{int}}\m(\sigma)|\vv_{K,\sigma}|u_{\sigma,+}(\varphi(u_{\sigma,-})-\varphi(u_{\sigma,+})).
\end{equation}
To control the second term of the right-hand side of \eqref{premiere} we introduce the set of edges $\A$ (see \cite{DGH03}) by
\begin{equation}\label{SA}
\A=\{\sigma\in\E_{int} \tq u_{\sigma,+}\ge u_{\sigma,-},\ u_{\sigma,+}<0\}\cup\{\sigma\in\E_{int} \tq u_{\sigma,+}< u_{\sigma,-},\ u_{\sigma,+}\ge 0\},
\end{equation}
Since $\varphi$ is non-decreasing, as in \cite{DGH03} we obtain
\begin{equation}
    \sum_{\sigma\in\E_{int}}\m(\sigma)|\vv_{K,\sigma}|u_{\sigma,+}(\varphi(u_{\sigma,-})-\varphi(u_{\sigma,+})) \leq \sum_{\sigma\in\A}\m(\sigma)|\vv_{K,\sigma}|u_{\sigma,+}(\varphi(u_{\sigma,-})-\varphi(u_{\sigma,+})).
\end{equation}
Now using  Cauchy-Schwarz and Hölder inequalities, and the following inequality (see Lemma 3.1, \cite{DGH03}), $\forall \sigma \in \A, \, |u_{\sigma,+}|^2|\varphi(u_{\sigma,-})-\varphi(u_{\sigma,+})|^2 \leq |u_{\sigma,-} - u_{\sigma,+}||\varphi(u_{\sigma,-})-\varphi(u_{\sigma,+})|$, we obtain
\begin{align}
   \sum_{\sigma\in\A}\m(\sigma)|\vv_{K,\sigma}||u_{\sigma,+}|(\varphi(u_{\sigma,-})-\varphi(u_{\sigma,+})) \leq\nonumber
   & \left(\sum_{\sigma\in\A}\m(\sigma)d_\sigma|\vv_{K,\sigma}|^2\right)^{\frac{1}{2}}\nonumber\\
   &\times \left(\sum_{\sigma \in \A}\frac{\m(\sigma)}{d_\sigma}|u_{\sigma,+}|^2|\varphi(u_{\sigma,-})-\varphi(u_{\sigma,+})|^2\right)^{\frac{1}{2}}\nonumber\\\leq
    &\left(\sum_{\sigma\in\A}\m(\sigma)d_\sigma\right)^{\frac{p-2}{p}}\left(\sum_{\sigma\in\A}\m(\sigma) d_\sigma|\vv_{K,\sigma}|^p\right)^{\frac{1}{p}}\nonumber\\
    &\times\left(\sum_{\sigma \in \A}\frac{\m(\sigma)}{d_\sigma}|u_{\sigma,+}|^2|\varphi(u_{\sigma,-})-\varphi(u_{\sigma,+})|^2\right)^{\frac{1}{2}} \nonumber
    \\ \leq
    &\left(\sum_{\sigma\in\A}\m(\sigma)d_\sigma\right)^{\frac{p-2}{p}}\left(\sum_{\sigma\in\A}\m(\sigma) d_\sigma|\vv_{K,\sigma}|^p\right)^{\frac{1}{p}}\nonumber\\
    &\times\left(\sum_{\sigma \in \A}\frac{\m(\sigma)}{d_\sigma}|u_k - u_{L}||\varphi(u_{K})-\varphi(u_{L})|\right)^{\frac{1}{2}}.
    \end{align}

Recalling that  $\sum_{\sigma\in\A}\m(\sigma)d_\sigma\leq\sum_{\sigma\in\E_{int}}\m(\sigma)d_\sigma=d\m(\Omega)$ and since the term $\left(\sum_{\sigma\in\E_{int}}\m(\sigma)d_\sigma|\vv_{K,\sigma}|^p\right)^\frac{1}{p}$ is bounded by $d^{\frac{1}{p}}\|\vv\|_{(L^p(\Omega))^d}$,  by Young's inequality  we get
\begin{equation}
\begin{aligned}
    \sum_{\sigma\in\A}\m(\sigma)|\vv_{K,\sigma}||u_{\sigma,+}|(\varphi(u_{\sigma,-})-\varphi(u_{\sigma,+})) &\leq \frac{1}{2\beta}d\m(\Omega)^{\frac{p-2}{p}}\|\vv\|_{(L^p(\Omega))^d}^2
    \\&+\frac{\beta}{2}\m(\Omega)^{\frac{p-2}{p}}\sum_{\sigma\in\E_{int}}\frac{\m(\sigma)}{d_\sigma}(u_K-u_L)(\varphi(u_K)-\varphi(u_L)),
    \end{aligned}
\end{equation}
where $\beta>0$. Since $0<\mu\leq\lambda(u)$, an appropriate choice of $\beta$ gives
\begin{equation}
    \sum_{\sigma\in\E_{int}}\frac{\m(\sigma)}{d_\sigma}(u_K-u_L)(\varphi(u_K)-\varphi(u_L)) \leq C(\Omega, \mu, \beta, p) \left(2\|f\|_{L^1(\Omega)} + d|\Omega|^{\frac{p-2}{p}}\|\vv\|^2_{(L^p(\Omega))^d}\right).
\end{equation}
Moreover we have, for all $(x,y)\in \mathbb{R}^2$, $\left(\ln(1+|x|)-\ln(1+|y|) \right)^2 \leq (x-y)(\varphi(x)-\varphi(y))$.
It follows that
\begin{equation}
    \sum_{\sigma\in\E_{int}}\frac{\m(\sigma)}{d_\sigma}\left(\ln(1+|u_K|)-\ln(1+|u_L|)\right)^2 \leq C(\Omega, \mu, \beta, p) \left(2\|f\|_{L^1(\Omega)} + d|\Omega|^{\frac{p-2}{p}}\|\vv\|^2_{(L^p(\Omega))^d}\right).
\end{equation}
Since $\underline{\med}(\ln(1+u_\M))=0$, the discrete Poincaré-Wirtinger inequality \eqref{PW} implies that
\begin{equation*}
\| \ln (1 + |u_{\M}|)\|_{1,2,\M}^2 \le C\left( 2\|f\|_{L^1(\Omega)} + d |\Omega|^{\frac{p-2}{p}}\, \|\,\vv\,\|^2_{\left(L^p(\Omega)\right)^d}\right).
\end{equation*}
\end{proof}{}

Let us state a corollary which is a consequence of Proposition \ref{estlog} and is
necessary for the proof of the estimate of Proposition \ref{EstTn} and for Proposition \ref{propestdiscrete}. It may be found in \cite{DGH03} and is recalled here with its proof, for the sake of completeness.

\begin{corollary}\label{log}
Let $\M$ be an admissible mesh satisfying \eqref{cmesh}. If
$u_{\M}=(u_K)_{K\in\T}\in X(\T)$ is a solution to
(\ref{scheme}) and, for $n>0$, $E_n=\{|u_{\M}|>n\}$, then there exists $C>0$
only depending on $(\Omega,\vv,f,d,p,\xi)$ such that
\begin{equation}\label{measEn}
\meas(E_n)\le \frac{C}{(\ln (1+n))^2}.
\end{equation}
\end{corollary}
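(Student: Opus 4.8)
The plan is to read off the desired bound from the $L^{2}$ part of the logarithmic estimate of Proposition \ref{estlog} via a Chebyshev (Markov) type argument. First I would recall that, by the very definition of the discrete $W^{1,2}$-norm, one has $\|\ln(1+|u_\M|)\|_{0,2}\le \|\ln(1+|u_\M|)\|_{1,2,\M}$, so that \eqref{estlogeq} yields at once
\[
\int_\Omega \bigl(\ln(1+|u_\M|)\bigr)^2\dx \;\le\; C\Bigl(2\|f\|_{L^1(\Omega)}+d|\Omega|^{\frac{p-2}{p}}\|\vv\|_{(L^p(\Omega))^d}^2\Bigr)\;=:\;C_0,
\]
where the right-hand side $C_0$ depends only on $(\Omega,\vv,f,d,p,\xi)$, the parameters $\mu$ and $\beta$ being fixed by the data and by the choice made in the proof of Proposition \ref{estlog}.

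Next, since $u_\M\in X(\T)$ is piecewise constant on the control volumes, the super-level set $E_n=\{|u_\M|>n\}$ is exactly the union of those $K\in\T$ for which $|u_K|>n$; in particular it is measurable, and on $E_n$ one has $\ln(1+|u_\M|)>\ln(1+n)\ge0$. Restricting the integral above to $E_n$ and using the monotonicity of $t\mapsto(\ln(1+t))^2$ then gives
\[
C_0 \;\ge\; \int_{E_n}\bigl(\ln(1+|u_\M|)\bigr)^2\dx \;\ge\; \bigl(\ln(1+n)\bigr)^2\,\meas(E_n),
\]
whence $\meas(E_n)\le C_0/(\ln(1+n))^2$, which is exactly \eqref{measEn} with $C=C_0$.

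There is essentially no genuine obstacle here: the statement is a direct consequence of Proposition \ref{estlog}, and the only points requiring a line of care are that $E_n$ is a union of cells (so that the constant cellwise value of $u_\M$ may be compared with $n$ and the threshold $\ln(1+n)$ pulled out of the integral) and the bookkeeping that collapses the various constants appearing in \eqref{estlogeq} into a single $C$ with the claimed dependence. I would also flag that the argument rests on Proposition \ref{estlog}, and hence implicitly on the normalization $\medu(u_\M)=0$ under which that log-estimate was established.
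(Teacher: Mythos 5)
Your proof is correct and follows essentially the same route as the paper: an $L^2$ bound on $\ln(1+|u_\M|)$ deduced from Proposition \ref{estlog}, followed by Chebyshev's inequality on the level set $E_n=\{\ln(1+|u_\M|)>\ln(1+n)\}$. The only (cosmetic) difference is that you extract the $L^2$ part directly from the definition $\|\cdot\|_{1,2,\M}=\|\cdot\|_{0,2}+|\cdot|_{1,2,\M}$, whereas the paper re-applies the discrete Poincaré--Wirtinger median inequality \eqref{PW} to $\ln(1+|u_\M|)$ — a step that is in fact already built into the statement \eqref{estlogeq}, so your shortcut is legitimate; your closing remark that the argument implicitly requires $\medu(u_\M)=0$ (inherited from Proposition \ref{estlog}) is also a fair observation, since the corollary as stated omits this hypothesis.
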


\begin{proof}
On the one hand, using Proposition \ref{estlog} we have
\begin{equation}\label{first}
    \| \ln (1 + |u_{\M}|)\|_{1,2,\M}^2 \le C \left( 2\|f\|_{L^1(\Omega)} + d |\Omega|^{\frac{p-2}{p}}\, \|\,\vv\,\|^2_{(L^p(\Omega))^d}\right).
    \end{equation}
On the other hand, since $\underline{\med}(\ln(1+|u_\M|))=0$, by the discrete Poincaré-Wirtinger median inequality \eqref{PW}, we have that there exists $C>0$ only depending on $(\Omega, d,  \xi)$ such that
\begin{equation}\label{second}
    \|\ln(1+|u_\M|)\|_{0,2}\leq C|u_{\M}|_{1,2,\M}.
\end{equation}{}
Therefore, using \eqref{first} and \eqref{second}, there exists $C>0$ only depending on $(\Omega,\vv,f,d,p,\xi)$ such that
\begin{equation} \label{third}
    \|\ln(1+|u_\M|)\|_{0,2}^2\leq C.
\end{equation}
Finally, due to the fact that $\meas(E_n)=\meas\left(\{\ln(1+|u_\M|)\geq
  \ln(1+n)\}\right)$, the Chebyshev inequality  and \eqref{third} lead to the
result.
\end{proof}{}

\begin{prop}[Estimate on $T_n(u_{\M})$]\label{EstTn}
Let $\M$ be an admissible mesh satisfying \eqref{cmesh}. If
$u_{\M}=(u_K)_{K\in\T}\in X(\T)$ is a solution to
(\ref{scheme}) having $\underline{\med}(u_\M)=0$, then for any $n\geq 0$, there exists $C>0$ only
depending on $(\Omega,\vv, f, n, d,\xi)$ such that
\begin{equation}\label{estTn}
\|T_n(u_{\M})\|_{1,2,\M} \le C.
\end{equation}
Let  $(\M_m)_{m\geq 1}$ be a sequence of admissible meshes satisfying
\eqref{cmesh} and such that $h_{\M_{m}}$ goes to zero as $m\rightarrow \infty$
and let $u_{m}=(u_K^{m})_{K\in\T_{m}}\in X(\T_{m})$ be a solution to
(\ref{scheme}) having $\underline{\med}(u_m)=0$.
Then there exists a measurable function $u$ finite a.e. in $\Omega$
such that, up to a subsequence (still indexed by $m$),
\begin{gather}
    T_n(u)\in H^1(\Omega),\ \textnormal{for any }n>0,  \label{i}
    \\
     \med(u)=0, \label{4.14bis}\\
     T_n(u_{m}) \rightarrow T_n(u)\ \textnormal{strongly in }L^2(\Omega)\ \textnormal{and a.e },\label{j}\\
     \nabla_{\mathcal{M}_{m}}T_n(u_{m})\rightharpoonup \nabla T_n(u)\
     \textnormal{in }(L^2(\Omega))^d, \,\, \forall n>0\label{k}.
\end{gather}
\end{prop}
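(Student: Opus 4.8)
For \eqref{estTn} the plan is to take $T_n(u_K)$ as a test function in the scheme \eqref{scheme}: I multiply the $K$-th equation by $T_n(u_K)$ and sum over $K\in\T$. After reordering the sums over interior edges, the diffusion part becomes $\sum_{\sigma\in\E_{int}}\frac{\m(\sigma)}{d_\sigma}\lambda(u)_\sigma(u_K-u_L)(T_n(u_K)-T_n(u_L))$; since $T_n$ is non-decreasing and $1$-Lipschitz one has $(u_K-u_L)(T_n(u_K)-T_n(u_L))\ge(T_n(u_K)-T_n(u_L))^2$, so $\lambda(u)_\sigma\ge\mu$ bounds this part below by $\mu|T_n(u_\M)|_{1,2,\M}^2$. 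The source term is bounded by $n\|f\|_{L^1(\Omega)}$ because $|T_n|\le n$. It remains to control the convective contribution $T_C=\sum_{\sigma\in\E_{int}}\m(\sigma)\vv_{K,\sigma}u_{\sigma,+}(T_n(u_K)-T_n(u_L))$, and this is the step I expect to be the main obstacle, since the upstream value $u_{\sigma,+}$ is not truncated and may be arbitrarily large.

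The key idea to overcome this is to compare $T_C$ with the centered, truncated quantity $\widetilde T_C=\sum_{\sigma\in\E_{int}}\m(\sigma)\vv_{K,\sigma}\frac{T_n(u_K)^2-T_n(u_L)^2}{2}$, which is the discrete analogue of $\int_\Omega\vv\cdot\nabla\big(T_n(u)^2/2\big)\dx$. Writing $a=u_{\sigma,+}$, $b=u_{\sigma,-}$ and tracking the sign of $\vv_{K,\sigma}$, one checks that $T_C-\widetilde T_C=\sum_{\sigma\in\E_{int}}\m(\sigma)|\vv_{K,\sigma}|\big[(a-T_n(a))(T_n(a)-T_n(b))+\tfrac12(T_n(a)-T_n(b))^2\big]$. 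Both bracketed terms are nonnegative: the second is the numerical diffusion of the upwind choice, and the first is nonnegative because $(a-T_n(a))$ and $(T_n(a)-T_n(b))$ always share the same sign (when $a>n$ one has $T_n(a)-T_n(b)=n-T_n(b)\ge0$, symmetrically for $a<-n$). Hence $T_C\ge\widetilde T_C$, so the term $-T_C$ appearing on the right-hand side satisfies $-T_C\le|\widetilde T_C|$ and only $\widetilde T_C$ must be estimated. Since $|T_n|\le n$, $|\widetilde T_C|\le n\sum_{\sigma\in\E_{int}}\m(\sigma)|\vv_{K,\sigma}||T_n(u_K)-T_n(u_L)|$, and Cauchy--Schwarz with the bound $\sum_{\sigma\in\E_{int}}\m(\sigma)d_\sigma|\vv_{K,\sigma}|^2\le C(d,p)|\Omega|^{(p-2)/p}\|\vv\|_{(L^p(\Omega))^d}^2$ (as in the proof of Proposition~\ref{estlog}) followed by Young's inequality absorbs a term $\tfrac{\mu}{2}|T_n(u_\M)|_{1,2,\M}^2$ into the diffusion. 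This gives $|T_n(u_\M)|_{1,2,\M}\le C(\Omega,\vv,f,n,d,\xi)$. Finally, since $T_n$ is non-decreasing with $T_n(0)=0$ and $\medu(u_\M)=0$, one has $\medu(T_n(u_\M))=0$, so the discrete Poincaré--Wirtinger median inequality \eqref{PW} controls $\|T_n(u_\M)\|_{0,2}$ by $|T_n(u_\M)|_{1,2,\M}$ and yields \eqref{estTn}.

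For the compactness part I fix $n$ and exploit that the constant in \eqref{estTn} is mesh-independent: the quantity $|T_n(u_m)|_{1,2,\M_m}+\|T_n(u_m)\|_{0,2}$ is bounded uniformly in $m$, so by the discrete Rellich theorem (Theorem~\ref{compact}) the sequence $(T_n(u_m))_m$ is relatively compact in $L^2(\Omega)$ with limits in $H^1(\Omega)$. A diagonal extraction over $n\in\N$ gives a single subsequence along which $T_n(u_m)$ converges in $L^2(\Omega)$ and a.e., for every $n$. To identify a limit $u$, I invoke the measure estimate $\meas\{|u_m|>n\}\le C/(\ln(1+n))^2$ of Corollary~\ref{log}: together with the a.e. convergence of the truncates it shows that $(u_m)$ is Cauchy in measure, hence converges a.e. (up to a further subsequence) to a measurable $u$ finite a.e. (because $\meas\{|u|>n\}\le C/(\ln(1+n))^2\to0$). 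By continuity of $T_n$ the limit of $T_n(u_m)$ is $T_n(u)$, giving \eqref{i} and \eqref{j}. For \eqref{k} I apply Lemma~\ref{WCG} with $\alpha=2$: \eqref{estTn} provides $\|T_n(u_m)\|_{1,2,\M_m}\le C$ and $T_n(u_m)\to T_n(u)\in W^{1,2}(\Omega)$ in $L^1(\Omega)$, whence $\nabla_{\M_m}T_n(u_m)\rightharpoonup\nabla T_n(u)$ weakly in $(L^2(\Omega))^d$.

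It remains to pass the median condition to the limit, \eqref{4.14bis}. From $\medu(u_m)=0$ one has $\meas\{u_m>t\}\le\meas(\Omega)/2$ for every $t>0$ and $\meas\{u_m\le t\}<\meas(\Omega)/2$ for every $t<0$. Using the a.e. convergence $u_m\to u$ and Fatou's lemma applied to the level sets, I pass to the limit to get $\meas\{u>t\}\le\meas(\Omega)/2$ for all $t>0$ and $\meas\{u<t\}\le\meas(\Omega)/2$ for all $t<0$; letting $t\to0$ in each (using monotonicity of the level sets) yields $\meas\{u>0\}\le\meas(\Omega)/2$ and $\meas\{u<0\}\le\meas(\Omega)/2$, i.e. $0\in\med(u)$, which is \eqref{4.14bis}. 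The two points I expect to demand the most care are the sign analysis ensuring $T_C-\widetilde T_C\ge0$ in the a priori estimate, and the Cauchy-in-measure/finiteness argument that upgrades the truncate convergences into a genuine a.e.-convergent, finite limit $u$.
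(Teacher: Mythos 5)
Your proposal is correct in its overall structure, and your Step 2 (mesh-uniform bound plus discrete Rellich compactness, diagonal extraction over $n$, Cauchy-in-measure via Corollary \ref{log}, Lemma \ref{WCG} with $\alpha=2$ for the weak gradient convergence, Fatou on level sets for the median) follows the paper's argument essentially verbatim. Where you genuinely diverge is the a priori estimate on the convective term. The paper restricts the sum to the edge set $\A=\{\sigma\in\E_{int} \tq u_{\sigma,+}\ge u_{\sigma,-},\ u_{\sigma,+}<0\}\cup\{\sigma\in\E_{int} \tq u_{\sigma,+}< u_{\sigma,-},\ u_{\sigma,+}\ge 0\}$ (the discarded terms have a sign because $T_n$ is non-decreasing), and then exploits that on $\A$ the implication $|u_{\sigma,+}|\ge n\Rightarrow|u_{\sigma,-}|\ge n$ lets one replace the untruncated factor $u_{\sigma,+}$ by $T_n(u_{\sigma,+})$ before applying Cauchy--Schwarz and Young. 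You instead prove the per-edge inequality $T_C\ge\widetilde T_C$ by comparing with the discrete flux of $T_n(u)^2/2$; your identity and the sign analysis $(a-T_n(a))(T_n(a)-T_n(b))\ge 0$ are correct in both cases $\vv_{K,\sigma}\ge 0$ and $\vv_{K,\sigma}<0$, and since $\widetilde T_C$ involves only truncated quantities, the Cauchy--Schwarz/Young absorption into the coercive part works exactly as you describe. Your route is arguably cleaner: it avoids any case splitting over $\A$ and makes explicit that the nonnegative numerical diffusion of the upwind choice is what saves the estimate. The paper's route has the advantage that the set $\A$ and the implication $|u_{\sigma,+}|\ge n\Rightarrow|u_{\sigma,-}|\ge n$ are the same tools reused in Proposition \ref{propestdiscrete} (the decay-of-energy estimate), so one piece of machinery serves both proofs.

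There is one small but genuine gap at the very end: \eqref{4.14bis} asserts $\med(u)=0$, i.e.\ that the median of $u$ is the single point $0$, whereas your Fatou argument only yields $0\in\med(u)$. Since $u$ itself need not belong to $H^1(\Omega)$, the median of $u$ could a priori be a nondegenerate interval containing $0$, so uniqueness does not come for free. The paper closes this with one further observation: for $n$ large the level sets $\{u>t\}$ and $\{u<t\}$ coincide with those of $T_n(u)$ for $|t|<n$, hence $\med(T_n(u))=\med(u)$; and since $T_n(u)\in H^1(\Omega)$ and $\Omega$ is connected, $\med(T_n(u))$ is a singleton, whence $\med(u)=\{0\}$. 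You should append this argument to complete the proof of \eqref{4.14bis}.
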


\begin{proof}
The proof is divided into 2 steps. First, we prove that $T_n(u_\M)$ satisfies
the a priori estimate \eqref{estTn}. In the second step, considering a sequence
of admissible meshes ${\mathcal M}_{m}$,  we prove that the solution $u_{m}$ to the scheme
\eqref{scheme} converges to a function $u$ as $m$ goes to infinity and that
\eqref{i}--\eqref{k} hold true.

\smallskip

\noindent{\bf {Step 1.} \it{Estimate on $T_n(u_\M)$.}} \\
After multiplying each equation of the scheme by $T_n(u_K)$,  summing over each control volume and reordering the sums, we obtain $T_1+T_2=T_3$ with
\begin{align*}
T_1 &= \sum_{\sigma\in\E_{int}} \frac{\m(\sigma)}{d_{\sigma}}\lambda(u)_{\sigma} (u_K-u_L) (T_n(u_K)-T_n(u_L)), \\
T_2 &= \sum_{\sigma\in\E_{int}} \m(\sigma) \vv_{K,\sigma} u_{\sigma,+} (T_n(u_K)-T_n(u_L)), \\
T_3 &= \sum_{K\in\T} \int_K f\,T_n(u_K)\,\mathrm{d}x.
\end{align*}
 Since $T_n$ is bounded by $n$, we obtain that $ |T_3| \le n \|f\|_{L^1(\Omega)}$. Then
\begin{equation}
    T_1\leq n\|f\|_{L^1(\Omega)}-T_2.
\end{equation}
Let $\sigma\in\E$. By the definition (\ref{usigma}) of $u_{\sigma,+}$ and recalling that $u_{\sigma,-}$ is the downstream choice of $u$, if $\vv_{K,\sigma}\ge 0$ it gives
\begin{equation*}
\vv_{K,\sigma}(T_n(u_K)-T_n(u_L)) = \vv_{K,\sigma}(T_n(u_{\sigma,+})-T_n(u_{\sigma,-})),
\end{equation*}
and if $\vv_{K,\sigma}<0$ it gives
\begin{equation*}
\vv_{K,\sigma}(T_n(u_K)-T_n(u_L)) = -\vv_{K,\sigma}(T_n(u_{\sigma,+})-T_n(u_{\sigma,-})).
\end{equation*}
In consequence, $T_2$ can be written as

\begin{equation}\label{t2}
-T_2 = \frac{1}{n}\sum_{\sigma\in\E_{int}} \m(\sigma)\,|\vv_{K,\sigma}|\,u_{\sigma,+}(T_n(u_{\sigma,-})-T_n(u_{\sigma,+})).
\end{equation}
As in the proof of estimate \eqref{estlogeq}, we use the set of edges
$\A=\{\sigma\in\E_{int} \tq u_{\sigma,+}\ge u_{\sigma,-},\ u_{\sigma,+}<0\}\cup\{\sigma\in\E_{int} \tq u_{\sigma,+}< u_{\sigma,-},\ u_{\sigma,+}\ge 0\}$.
Since $T_n$ is non decreasing we have
\begin{equation}\label{-T2}
-T_2 \le \frac{1}{n}\sum_{\sigma\in\A} \m(\sigma)\,|\vv_{K,\sigma}|\,u_{\sigma,+}(T_n(u_{\sigma,-})-T_n(u_{\sigma,+})).
\end{equation}
Due to the fact that $\forall\sigma\in\A$, if $|u_{\sigma,+}|\ge n$ then $|u_{\sigma,-}|\ge n$ we deduce that
\begin{equation*}
 \ u_{\sigma,+} (T_n(u_{\sigma,-})-T_n(u_{\sigma,+}))= T_n(u_{\sigma,+}) (T_n(u_{\sigma,-})-T_n(u_{\sigma,+})),\,\, \forall \sigma \in \A.
\end{equation*}
It follows that
\begin{align*}
    -T_2  \le \sum_{\sigma\in\A} \m(\sigma)\, |\vv_{K,\sigma}|\, T_n(u_{\sigma,+})\, \left(T_n(u_{\sigma,-}) - T_n(u_{\sigma,+}) \right).
\end{align*}
Moreover, using Cauchy-Schwarz and Young inequalities, taking into account that\\$\left( \sum\limits_{\sigma\in\E_{int}} \m(\sigma)d_\sigma|\vv_{K,\sigma}|^2\right)^\frac{1}{2}$ is bounded by $d^{\frac{1}{2}}\|\vv\|_{(L^2(\Omega))^d}$, we obtain
\begin{align*}
-T_2 & \le \sum_{\sigma\in\A} \m(\sigma)\, |\vv_{K,\sigma}|\, T_n(u_{\sigma,+})\, \left(T_n(u_{\sigma,-}) - T_n(u_{\sigma,+}) \right) \\
     & \le \left( \sum_{\sigma\in\E_{int}} \m(\sigma) d_{\sigma} |\vv_{K,\sigma}|^2 \right)^{\frac{1}{2}}  \left( \sum_{\sigma\in\A}\frac{\m(\sigma)}{d_{\sigma}} T_n(u_{\sigma,+})^2 \left(T_n(u_{\sigma,-}) -T_n(u_{\sigma,+}) \right)^2 \right)^{\frac{1}{2}} \\
     & \le n\,d^{\frac{1}{2}} \|\vv\|_{L^2(\Omega)^d} \left( \sum_{\sigma\in\A}\frac{\m(\sigma)}{d_{\sigma}}  \left( T_n(u_{\sigma,-}) - T_n(u_{\sigma,+}) \right)^2 \right)^{\frac{1}{2}}\\
     & \le \frac{1}{2\beta} n^2\,d \|\vv\|_{L^2(\Omega)^d}^2 + \frac{\beta}{2} \sum_{\sigma\in\A}\frac{\m(\sigma)}{d_{\sigma}} (T_n(u_{\sigma,-})-T_n(u_{\sigma,+}))^2  \\
     & \le \frac{1}{2\beta} n^2\,d \|\vv\|_{L^2(\Omega)^d}^2 + \frac{\beta}{2} \sum_{\sigma\in\E_{int}}\frac{\m(\sigma)}{d_{\sigma}} (T_n(u_K)-T_n(u_L))^2,
\end{align*}
where $\beta>0$. Since $T_n(u_K)-T_n(u_L) \le u_K-u_L$, we have
\begin{equation*}
-T_2 \le \frac{1}{2\beta} n^2\,d \|\vv\|_{(L^2(\Omega))^d}^2 + \frac{\beta}{2} \sum_{\sigma\in\E_{int}}\frac{\m(\sigma)}{d_{\sigma}} (u_K-u_L) \left(T_n(u_K) - T_n(u_L) \right),
\end{equation*}
and we can deduce that
\begin{equation*}
\begin{aligned}
 \sum_{\sigma\in\E_{int}}\lambda(u)_\sigma\frac{\m(\sigma)}{d_{\sigma}} (u_K-u_L) \left(T_n(u_K) - T_n(u_L) \right) &\le n \|f\|_{L^1(\Omega)} + \frac{1}{2\beta} n^2\,d \|\vv\|_{(L^2(\Omega))^d}^2
 \\&+\frac{\beta}{2} \sum_{\sigma\in\E_{int}}\frac{\m(\sigma)}{d_{\sigma}} (u_K-u_L) \left(T_n(u_K) - T_n(u_L) \right).
 \end{aligned}
\end{equation*}
Recalling that $\underline{\med}(T_n(u_\M))=0$ and $0\leq\mu\leq\lambda(u)$, an appropriate choice of $\beta$ and the Poincaré-Wirtinger inequality \eqref{PW} lead to the result.


\smallskip

\noindent{\bf{Step 2.} \it{In this step we consider
    sequence of admissible meshes  $(\M_m)_{m\geq 1}$ satisfying \eqref{cmesh} and such that
    $h_{\M_{m}}$ goes to zero as $m\rightarrow \infty$. If
    $u_{m}=(u_K^{m})_{K\in\T_{m}}\in X(\T_{m})$ denotes  a solution to
(\ref{scheme}) having $\underline{\med}(u_m)=0$, we show
that there exists a measurable
    function $u$ finite a.e. in $\Omega$  such that \eqref{i}--\eqref{k} hold true.}}

The method is widely used for elliptic equations with $L^1$ (or measure data)
(see e.g. \cite{zbMATH01463265}) and consists in proving that, up to
subsequence, $u_{m}$ is a Cauchy sequence in measure. For the convenience of
the reader we give the complete arguments. For any $n$, in view of Step 1 we
know that the sequence $\big(\|T_{n}(u_{m})\|_{1,2,\mathcal{M}_{m}})\big)_{m\geq 1}$ is bounded
(uniformly with respect to $m$). By Theorem~\ref{compact} and a diagonal process
($n$ being a natural number), up to a subsequence still indexed by $m$, we
deduce that, for any $n\in\N$, there exists $v_{n}$ belonging to $H^{1}(\Omega)$ such that

\begin{equation}
  T_{n}(u_{m}) \rightarrow v_{n}, \text{ a.e. in $\Omega$, as $m\rightarrow
    \infty$}.\label{eq:og0}
\end{equation}

We now prove that $u_{m}$ is a Cauchy sequence in measure. Let $\omega>0$. For
all $n>0$, and  all
$m,p\geq 0$ , we have
\begin{equation*}
\{ |u_{m}-u_{p}|>\omega\} \subset \{ |u_{m}|>n\} \cup \{ |u_{p}|>n\} \cup \{ |T_n(u_{m})-T_n(u_{p})|>\omega\}.
\end{equation*}
Let $\varepsilon>0$ fixed. By Corollary \ref{log}, let $n>0$ such that, for all
$m,p\geq 0$,
\begin{equation*}
\textnormal{meas}(\{ |u_{m}|>n\})+\textnormal{meas}(\{ |u_{p}|>n\}) < \frac{\varepsilon}{2}.
\end{equation*}
Once $n$ is chosen, since $T_{n}(u_{m})$ converges almost everywhere to $v_{n}$
as $m$ goes to infinity we obtain
\begin{equation*}
\exists m_0>0 \tq \forall m,p\geq m_{0} \quad \textnormal{meas}(\{ |T_n(u_{m})-T_n(u_{p})|>\omega\}) \le \frac{\varepsilon}{2}.
\end{equation*}
Therefore, we deduce that $\forall m,p\geq m_{0}$
\begin{equation*}
\textnormal{meas}\{ |u_{m}-u_{p}|>\omega\} < \varepsilon.
\end{equation*}
Hence $(u_{m})_{m\in\N}$ is a Cauchy sequence in measure. Consequently, up to a subsequence still indexed by $m$, there exists a measurable function $u$ such that
\begin{equation}\label{conver}
    u_{m}\rightarrow u\,\text{ a.e. in $\Omega$. }
  \end{equation}
  It follows from Corollary \ref{log} that $u$ is finite a.e. in
  $\Omega$.

  Moreover by the pointwise convergence \eqref{eq:og0} of
  $T_{n}(u_{m})$ for any $n\in\N$ we deduce that $T_{n}(u)=v_{n}\in H^{1}(\Omega)$.
Applying Theorem \ref{WCG} we obtain that
\begin{equation*}
    \nabla_{\M_{m}}T_n(u_m)\rightharpoonup \nabla T_n(u)\ \textnormal{in}\
    (L^2(\Omega))^d, \text{ as $m\rightarrow\infty$.}
\end{equation*}

It remains to prove that $\med(u)=0$. Due to the point-wise convergence of
$u_{_m}$ to $u$, the sequence $\mathds{1}_{\{u_{m}>0\}}\mathds{1}_{\{u>0\}}$
converges to $\mathds{1}_{\{u>0\}}$ a.e. as $h_{\M}$ goes to zero. Recalling
that $\underline{\med}(u_m)=0$ Fatou's lemma leads to
\begin{align*}
    \meas\{u(x)>0\}&\leq \liminf{\int_\Omega \mathds{1}_{\{u_m>0\}}\mathds{1}_{\{u>0\}}}\mathrm{d}x\\
    &\leq \liminf \meas\{u_m(x)>0\}\\
    &\leq \dfrac{\meas(\Omega)}{2}.
\end{align*}
Analogously from the convergence of $\mathds{1}_{\{u_{m}<0\}}\mathds{1}_{\{u<0\}}$ to $\mathds{1}_{\{u<0\}}$ a.e. as $m\rightarrow\infty$
\begin{align*}
    \meas\{u(x)<0\}\leq \frac{\meas(\Omega)}{2}.
\end{align*}
It follows that $0 \in \med(u)$. Since we have  for $n$ large enough
$\med(T_n(u))=\med(u)$ and since $T_n(u)$ belongs to $H^1(\Omega)$, the median of $u$
is unique and it is equal to $0$.
\end{proof}

Let us recall that in the renormalized framework the decay of the energy
\eqref{def2} plays an important role to derive stability or uniqueness
results. In the following proposition we show a discrete version of the decay of
the energy (uniformly with respect to the sequence of the admissible
meshes). Having \eqref{energiediscrete} ans \eqref{corenergiediscrete} is
crucial to pass to the limit in the scheme.

\begin{prop}[Discrete estimate on the energy]\label{propestdiscrete}
Let $(\M_m)_{m\ge1}$ be a sequence of admissible meshes satisfying
\eqref{cmesh} and such that $h_{\M_{m}}\rightarrow 0$ as $m\rightarrow
\infty$. For any $m\geq 0$, let us consider  $u_{m}=(u_K^{m})_{K\in\T_m}\in X(\T_{m})$  a
solution to (\ref{scheme}) and let $u$ be a measurable function finite a.e. in
$\Omega$ such that, up to a subsequence still indexed by $m$, the second part of Proposition \ref{EstTn} holds.
Then we have
\begin{equation}\label{energiediscrete}
\lim_{n\rightarrow+\infty}\varlimsup_{h_{\M_m}\rightarrow0}
\frac{1}{n}\sum_{\sigma\in\E_{int}}\frac{\m(\sigma)}{d_{\sigma}}\lambda(u_{m})_{\sigma}(u^{m}_K-u^{m}_L)(T_n(u^{m}_K)-T_n(u^{m}_L))=0,
\end{equation}
where $u_L=0$ if $\sigma\in\E_{ext}$, and
\begin{equation}\label{corenergiediscrete}
\lim_{n\rightarrow+\infty}\varlimsup_{h_{\M_m}\rightarrow 0} \frac{1}{n}
\sum_{\sigma\in\E_{int}} \m(\sigma)\, |\vv_{K,\sigma}|\, |u^{m}_{\sigma,+}|\,
|T_n(u^{m}_{\sigma,+})-T_n(u^{m}_{\sigma,-})| = 0.
\end{equation}
\end{prop}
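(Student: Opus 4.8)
The plan is to imitate, at the discrete level, the continuous proof of the energy decay \eqref{def2}, taking as test function in the scheme the ``annulus cut-off'' $\theta_n=T_{2n}-T_n$, i.e. plugging $\theta_n(u^m_K)$ into \eqref{scheme} and reordering the sums exactly as in the proofs of Proposition \ref{estlog} and of Step 1 of Proposition \ref{EstTn}. This produces $A_m(n)+D_m(n)=R_m(n)$, where $A_m(n)=\sum_{\sigma\in\E_{int}}\frac{\m(\sigma)}{d_\sigma}\lambda(u_m)_\sigma(u^m_K-u^m_L)(\theta_n(u^m_K)-\theta_n(u^m_L))$ is the (nonnegative, since $\theta_n$ is nondecreasing) diffusion energy concentrated on the annulus $\{n<|u_m|<2n\}$, $D_m(n)$ is the convective contribution and $R_m(n)=\sum_K\int_K f\,\theta_n(u^m_K)$.

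First I would dispose of the right-hand side: since $|\theta_n|\le n$ and $\theta_n(u^m_K)=0$ when $|u^m_K|\le n$, one gets $\frac1n|R_m(n)|\le\int_{\{|u_m|>n\}}|f|\,\dx$, and by Corollary \ref{log} (uniform bound $C/(\ln(1+n))^2$ on $\meas\{|u_m|>n\}$) together with the absolute continuity of the integral of the fixed function $f\in L^1(\Omega)$, this is $\le\delta(n)$ with $\delta(n)\to0$ uniformly in $m$. To recover the stated form \eqref{energiediscrete} (which involves $T_n$ over the whole range, not the annulus) I would set $E_m(N)=\sum_\sigma\frac{\m(\sigma)}{d_\sigma}\lambda(u_m)_\sigma(u^m_K-u^m_L)(T_N(u^m_K)-T_N(u^m_L))$, observe that $N\mapsto E_m(N)$ is nondecreasing with $A_m(n)=E_m(2n)-E_m(n)$, and telescope dyadically: once $\lim_n\varlimsup_m\frac1n A_m(n)=0$ is known, writing $E_m(2^k)=E_m(1)+\sum_{j=0}^{k-1}A_m(2^j)$, dividing by $2^k$, and passing to $\varlimsup_m$ then $k\to\infty$ gives \eqref{energiediscrete} (the geometric weights $2^{j-k}$ convert the decay of $\varlimsup_m\frac1{2^j}A_m(2^j)$ into decay of the average), the general $n$ following from monotonicity of $E_m$.

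The convective term is where the work lies. As in Proposition \ref{estlog} I would rewrite $D_m(n)$ with $|\vv_{K,\sigma}|$ and the downstream value and split via the set $\A$ of \eqref{SA}: since $\theta_n$ is nondecreasing the summand is nonnegative on $\E_{int}\setminus\A$, so those edges only help and may be dropped, leaving $-\frac1n D_m(n)\le\frac1n\sum_{\sigma\in\A}\m(\sigma)|\vv_{K,\sigma}||u^m_{\sigma,+}|\,|\theta_n(u^m_{\sigma,+})-\theta_n(u^m_{\sigma,-})|$. On $\A$ one has $|u^m_{\sigma,+}|\le|u^m_{\sigma,-}|$ with equal signs, and on the support of the $\theta_n$-difference necessarily $|u^m_{\sigma,+}|\le2n$ (if $|u^m_{\sigma,+}|>2n$ then $|u^m_{\sigma,-}|>2n$ and both $\theta_n$ values equal $\pm n$), so $\frac1n|u^m_{\sigma,+}|\le2$. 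Then, using $(\theta_n(a)-\theta_n(b))^2\le(a-b)(\theta_n(a)-\theta_n(b))$, Cauchy–Schwarz, $\m(\sigma)d_\sigma|\vv_{K,\sigma}|^2\le d\int_{D_\sigma}|\vv|^2\dx$, and the fact that the diamonds entering the sum lie in $\{|u_m|>n\}$, one bounds $-\frac1n D_m(n)$ by $C\big(\int_{\{|u_m|>n\}}|\vv|^2\dx\big)^{1/2}\big(\tfrac1\mu A_m(n)\big)^{1/2}$. The very same computation with $T_n$ in place of $\theta_n$ is precisely the quantity in \eqref{corenergiediscrete}, so the two estimates come out of one and the same manipulation.

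The main obstacle, as I anticipate it, is closing this convective bound uniformly. With $Z=\frac1n A_m(n)$ the inequality reads $Z\le\delta(n)+C\,\eta(n)^{1/2}(nZ)^{1/2}$, where $\eta(n)=\sup_m\int_{\{|u_m|>n\}}|\vv|^2\dx$, and a crude Young inequality leaves behind a term of size $C\,n\,\eta(n)$ whose vanishing is not granted by the merely logarithmic decay of $\meas\{|u_m|>n\}$. The delicate point is therefore to exploit the strict integrability $\vv\in L^p$ with $p>2$ ($p=d$ for $d\ge3$), so that $|\vv|^2$ is equi-integrable and $\eta(n)$ is controlled through Hölder by a positive power of $\meas\{|u_m|>n\}$, and to balance this against the factor $n$ using the uniform estimate of Corollary \ref{log}; this is exactly where the noncoercive $L^1$ difficulty concentrates and where the arguments of \cite{DGH03} and \cite{leclavier} must be adapted. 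Once $\lim_n\varlimsup_m Z=0$ is secured, both \eqref{energiediscrete} (through the dyadic telescoping above) and \eqref{corenergiediscrete} follow.
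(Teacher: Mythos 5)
Your treatment of the source term and the dyadic telescoping (reducing \eqref{energiediscrete} to decay of the annulus energy $\frac1n A_m(n)$, using Corollary \ref{log} plus absolute continuity of $\int|f|$) is sound, and is in fact a slightly cleaner disposal of $T_3$ than the paper's. But the convective estimate — which you yourself flag as ``where the work lies'' — has a genuine gap, and it cannot be closed by the remedy you sketch. After bounding $|u^m_{\sigma,+}|\le 2n$ and cancelling against the $\frac1n$, your inequality reads $Z\le\delta(n)+C\,\eta(n)^{1/2}(nZ)^{1/2}$ with $\eta(n)=\sup_m\int_{U_n^m}|\vv|^2\dx$, and Young leaves the term $C\,n\,\eta(n)$. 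The only uniform-in-$m$ information available on the superlevel sets is Corollary \ref{log}, which gives $\meas\{|u_m|>n\}\le C/(\ln(1+n))^2$; by H\"older this yields at best $\eta(n)\le C(\ln(1+n))^{-2(p-2)/p}$, and $n$ times any negative power of $\ln(1+n)$ diverges. So no ``balancing'' of the factor $n$ against the logarithmic measure decay is possible: the crude bound $|u^m_{\sigma,+}|/n\le 2$ destroys exactly the structure that is needed. The paper's proof avoids this by never giving up a full factor of $n$: on the set $\A$ it writes $u^m_{\sigma,+}\,(T_n(u^m_{\sigma,-})-T_n(u^m_{\sigma,+}))=T_n(u^m_{\sigma,+})(T_n(u^m_{\sigma,-})-T_n(u^m_{\sigma,+}))$, splits at a \emph{fixed} threshold $r$, and (i) for $|u^m_{\sigma,+}|\le r$ uses Cauchy--Schwarz and Young, producing $\frac{r^2d\|\vv\|^2_{(L^2)^d}}{2\beta n}$ (which vanishes as $n\to\infty$ with $r$ fixed) plus a quadratic term absorbed into $T_1$; (ii) for $r\le|u^m_{\sigma,+}|\le n$ keeps the factor $T_n(u^m_{\sigma,+})$ and applies the three-exponent H\"older inequality $\frac1d+\frac{d-2}{2d}+\frac12=1$ together with the discrete Sobolev inequality \eqref{Sobolev} and the discrete Poincar\'e--Wirtinger median inequality \eqref{PW}, obtaining a bound of the form $\frac{C\|\vv\|_{(L^p(E_r))^d}}{n}\sum_\sigma\frac{\m(\sigma)}{d_\sigma}(T_n(u^m_K)-T_n(u^m_L))^2$. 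This is \emph{quadratic} in the discrete $H^1$ seminorm of $T_n(u_m)$, hence absorbable into the diffusion term $T_1$ as soon as $d^{1/2}\|\vv\|_{(L^p(E_r))^d}\le\frac12$ — and this smallness (no rate whatsoever) is all that Corollary \ref{log} and absolute continuity of the integral are asked to provide, uniformly in $m$. That quadratic structure is the key idea missing from your argument.

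A secondary gap concerns \eqref{corenergiediscrete}: its sum runs over \emph{all} interior edges, whereas your sign argument discards the edges of $\E_{int}\setminus\A$ (the set $\B$ in the paper's notation), so ``the very same manipulation'' would at best control the $\A$-part. The paper recovers the $\B$-part only \emph{after} \eqref{energiediscrete} is proved, by returning to the scheme identity $T_1+T_2=T_3$: since $T_1\ge0$ and the $\B$-contribution $T_{2,2}\ge0$, the inequality $T_1+T_{2,2}\le|T_{2,1}|+T_3$ together with the already established limits forces both $|T_{2,1}|$ and $T_{2,2}$ to vanish in the iterated limit. Some version of this bookkeeping is unavoidable in your setup as well.
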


\begin{proof}
Let $m\geq 1$ and $u_m=\big(\um_{K}\big)_{K\in\T_{m}}$ be a solution of \eqref{scheme}. Multiplying each
equation of the scheme by $\frac{T_n(\um_K)}{n}$, summing over $K\in\M$ and
gathering by edges we find
\[
  T_1+T_2=T_3
\]
with
\begin{align}
T_1 &=
      \frac{1}{n}\sum_{\sigma\in\E_{int}}\frac{\m(\sigma)}{d_{\sigma}}\lambda(u_{m})_\sigma(\um_K-\um_L)(T_n(\um_K)-T_n(\um_L)), \label{T1}
  \\
T_2 &= \frac{1}{n}\sum_{\sigma\in\E_{int}} \m(\sigma) \vv_{K,\sigma}\um_{\sigma,+} (T_n(\um_K)-T_n(\um_L)), \label{T2} \\
T_3 &= \frac{1}{n}\sum_{K\in\M} \int_K f\,T_n(\um_K)\,\mathrm{d}x. \label{T3}
\end{align}
According to the definition of $u_{m}$, we have
\begin{equation*}
T_3 = \int_{\Omega}f\frac{T_n(u_m)}{n}\,\mathrm{d}x.
\end{equation*}
Due to \eqref{conver}, $T_n(u_m)$ converges to $T_n(u)$ as $m$ goes to infinity in $L^\infty(\Omega)$ weak-$\star$ and a.e. Since $f$ belongs to $L^1(\Omega)$ it follows that
\begin{equation*}
    \lim_{h_{\M_{m}}\rightarrow0}T_3=\int_{\Omega}f\frac{T_n(u)}{n}\,\mathrm{d}x.
\end{equation*}
Recalling that $u$ is finite almost everywhere in $\Omega$, $\frac{T_n(u)}{n}$
converges to $0$ a.e. in $\Omega$ as $n$ goes to infinity. Therefore, since $f$
belongs to $L^1(\Omega)$ and $\left|\dfrac{T_n}{n}\right|$ is bounded by one,
the Lebesgue dominated theorem allows one to conclude that
\begin{equation}\label{limT3}
\lim_{n\rightarrow+\infty}\lim_{h_{\M_{m}}\rightarrow0}T_3=0.
\end{equation}
We now study the term $T_2$. We know from \eqref{t2} that it can be written as
\begin{equation}\label{+T2}
T_2 = \frac{1}{n}\sum_{\sigma\in\E_{int}}
\m(\sigma)\,|\vv_{K,\sigma}|\,\um_{\sigma,+}(T_n(\um_{\sigma,+})-T_n(\um_{\sigma,-})).
\end{equation}
 Recalling the definition of the subset of edges $\A$
\begin{equation}\label{EA}
\A=\{\sigma\in\E_{int} \tq \um_{\sigma,+}\ge \um_{\sigma,-},\ \um_{\sigma,+}<0\}\cup\{\sigma\in\E_{int} \tq \um_{\sigma,+}< \um_{\sigma,-},\ \um_{\sigma,+}\ge 0\},
\end{equation}
we denote by $\B$ the subset of edges such that
\begin{equation}\label{EB}
\B=\{\sigma\in\E_{int} \tq \um_{\sigma,+}\ge \um_{\sigma,-},\ \um_{\sigma,+}\ge 0\}\cup\{\sigma\in\E_{int} \tq \um_{\sigma,+}< \um_{\sigma,-},\ \um_{\sigma,+}< 0\}.
\end{equation}{}
Then $T_2$ can be written as
\begin{equation}
\begin{aligned}
 T_2= & \frac{1}{n}\sum_{\sigma\in\A}
 \m(\sigma)\,|\vv_{K,\sigma}|\,\um_{\sigma,+}(T_n(\um_{\sigma,+})-T_n(\um_{\sigma,-}))
 \\ & + \frac{1}{n}\sum_{\sigma\in\B}
 \m(\sigma)\,|\vv_{K,\sigma}|\,\um_{\sigma,+}(T_n(\um_{\sigma,+})-T_n(\um_{\sigma,-}))
 \\ & =T_{2,1}+T_{2,2}.
\end{aligned}
\end{equation}
Using the fact that $T_n$ is non decreasing and $T_n(0)=0$, we notice that
\begin{align*}
|T_{2,1}| &= -T_{2,1} = -\frac{1}{n} \sum_{\sigma\in\A} \m(\sigma)\,
            |\vv_{K,\sigma}|\, \um_{\sigma,+} (T_n( \um_{\sigma,+})-T_n(
            \um_{\sigma,-})), \\
|T_{2,2}| &= T_{2,2} = \frac{1}{n} \sum_{\sigma\in\B} \m(\sigma)\,
            |\vv_{K,\sigma}|\, \um_{\sigma,+} (T_n( \um_{\sigma,+})-T_n(
            \um_{\sigma,-})),
\end{align*}
and $|T_2| = -T_{2,1} + T_{2,2}$.\\
As far as $-T_{2,1}$ is concerned, we observe that for any $\sigma \in \A$,
$|\um_{\sigma,+}|\ge n$ implies $|\um_{\sigma,-}|\ge n$. To deal with this term, as
in \cite{leclavier}, we split the sum on $\{|\um_{\sigma,+}|\le r\}$ and on
$\{r\le|\um_{\sigma,+}|\le n\}$ where $r$ is a positive real number which will be
chosen later. We have
\[
  -T_{2,1}=I_1+I_2
\]
with
\begin{align}
I_1 &= \frac{1}{n}\sum\limits_{\underset{|\um_{\sigma,+}|\le r}{\sigma\in\A}}
      \m(\sigma)\,|\vv_{K,\sigma}|\,\um_{\sigma,+}(T_n(\um_{\sigma,-})-T_n(\um_{\sigma,+})), \label{I1}
  \\
I_2 &= \frac{1}{n}\sum\limits_{\underset{r\le|\um_{\sigma,+}|\le
      n}{\sigma\in\A}}
      \m(\sigma)\,|\vv_{K,\sigma}|\,\um_{\sigma,+}(T_n(\um_{\sigma,-})-T_n(\um_{\sigma,+})). \label{I2}
\end{align}
Recalling that $\left( \sum\limits_{\sigma\in\A}
  \m(\sigma)d_\sigma|\vv_{K,\sigma}|^2\right)^\frac{1}{2}$ is bounded by
$d^{\frac{1}{2}}\|\vv\|_{(L^2(\Omega))^d}$, the Cauchy-Schwarz inequality and
the Young inequality yield that
\begin{align}\label{|I1|}
|I_1| &= \frac{1}{n}\sum_{\substack{\sigma\in\A \\ |\um_{\sigma,+}|\le r}}
  \m(\sigma)\,|\vv_{K,\sigma}|\,\um_{\sigma,+}
  (T_n(\um_{\sigma,-})-T_n(\um_{\sigma,+})) \nonumber \\
&\le \frac{1}{n} \left(\sum_{\substack{\sigma\in\A \\ |\um_{\sigma,+}|\le
  r}}\m(\sigma) d_{\sigma} |\vv_{K,\sigma}|^2 \right)^{\frac{1}{2}}
  \left(\sum_{\substack{\sigma\in\A \\ |\um_{\sigma,+}|\le r}}
  \frac{\m(\sigma)}{d_{\sigma}}(\um_{\sigma,+})^2
  (T_n(\um_{\sigma,-})-T_n(\um_{\sigma,+}))^2\right)^{\frac{1}{2}} \nonumber \\
&\le \frac{r}{n} \left(\sum_{\substack{\sigma\in\A \\ |\um_{\sigma,+}|\le
  r}}\m(\sigma) d_{\sigma} |\vv_{K,\sigma}|^2 \right)^{\frac{1}{2}}
  \left(\sum_{\substack{\sigma\in\A \\ |\um_{\sigma,+}|\le r}}
  \frac{\m(\sigma)}{d_{\sigma}} (\um_K-\um_L)(T_n(\um_K)-T_n(\um_L))\right)^{\frac{1}{2}}
  \nonumber \\
&\le \frac{1}{n} \left[ \frac{r^2 d\, \|\,\vv \,\|_{(L^2(\Omega))^d}^2}{2\beta}
                  + \frac{\beta}{2} \sum_{\substack{\sigma\in\A \\
  |\um_{\sigma,+}|\le r}} \frac{\m(\sigma)}{d_{\sigma}}
  (\um_K-\um_L)(T_n(\um_K)-T_n(\um_L)) \right],
\end{align}
where $\beta>0$ (to be chosen later).
To control the second term $I_2$, we distinguish the case $d\geq 3$ and
$d=2$. We know that for any $\sigma\in\A$, $|\um_{\sigma,+}|\ge r$ implies
$|\um_{\sigma,-}|\ge r$; if $d\geq 3$ the equality
$$\frac{1}{d}+\frac{d-2}{2d}+\frac{1}{2}=1,$$
and the  Hölder inequality give
\begin{equation*}
\begin{split}
|I_2| & \le \frac{1}{n} \sum_{\substack{\sigma\in\A \\ r\le|\um_{\sigma,+}|\le n
    \\ r\le|\um_{\sigma,-}|}} \m(\sigma)\, |\vv_{K,\sigma}|\,T_n(\um_{\sigma,+})\,
(T_n(\um_{\sigma,-})-T_n(\um_{\sigma,+})) \\
      & \le \frac{1}{n}\Bigg(\sum_{\substack{\sigma\in\A \\
          r\le|\um_{\sigma,+}|\le n \\ r\le|\um_{\sigma,-}|}} \m(\sigma)\,
      d_{\sigma}\,|\vv_{K,\sigma}|^d\Bigg)^{\frac{1}{d}}
      \Bigg(\sum_{\substack{\sigma\in\A \\ r\le|\um_{\sigma,+}|\le n \\
          r\le|\um_{\sigma,-}|}} \m(\sigma)\,
      d_{\sigma}|T_n(\um_{\sigma,+})|^{\frac{2d}{d-2}}\Bigg)^{\frac{d-2}{2d}} \\
      & \ \ \ \ \ \times \Bigg(\sum_{\substack{\sigma\in\A \\
          r\le|\um_{\sigma,+}|\le n \\ r\le|\um_{\sigma,-}|}} \frac{\m(\sigma)}{
        d_{\sigma}}(T_n(\um_{\sigma,-})-T_n(\um_{\sigma,+}))^2\Bigg)^{\frac{1}{2}}\\
      & \le \frac{1}{n}\Bigg(\sum_{\substack{\sigma\in\A \\
          r\le|u_{\sigma,+}|\le n \\ r\le|\um_{\sigma,-}|}}   \m(\sigma)\,
      d_{\sigma}\,|\vv_{K,\sigma}|^d\Bigg)^{\frac{1}{d}}
      \Bigg(\sum_{\sigma\in\E_{int}} \m(\sigma)\,
      d_{\sigma}|T_n(\um_{\sigma,+})|^{\frac{2d}{d-2}}\Bigg)^{\frac{d-2}{2d}} \\
      & \ \ \ \ \ \times \Bigg(\sum_{\sigma\in\E_{int}}\frac{\m(\sigma)}{
        d_{\sigma}}(T_n(\um_{\sigma,-})-T_n(\um_{\sigma,+}))^2\Bigg)^{\frac{1}{2}}.\\
\end{split}
\end{equation*}
Recalling that $\underline{\med}(T_n(u_m))=0$, the discrete  Poincaré-Wirtinger inequality \eqref{PW} and the discrete Sobolev inequality \eqref{Sobolev} lead to
\begin{equation}\label{|I2|}
|I_2| \le C_1\frac{d^{\frac{1}{2}} \|\vv\|_{(L^d(E_r))^d}
}{n}\sum_{\sigma\in\E_{int}} \frac{\m(\sigma)}{d_{\sigma}}\,
(T_n(\um_K)-T_n(\um_L))^2,
\end{equation}
where  $E_r$ is the set where $|\um_{\sigma,+}|\geq r$ and $C_1>0$ is a constant
independent of $n$ and $\M$. If $d=2$, similar arguments lead to
\begin{equation}\label{|I2|bis}
|I_2| \le C_2 \frac{d^{\frac{1}{2}} \|\vv\|_{(L^p(E_r))^d}
}{n}\sum_{\sigma\in\E_{int}} \frac{\m(\sigma)}{d_{\sigma}}\,
(T_n(\um_K)-T_n(\um_L))^2,
\end{equation}
where $C_2>0$ is a constant independent of $n$ and $\M$.\\
In view of Corollary~\ref{log} and since $\vv\in (L^p(\Omega))^d$ ($2<p<+\infty$
if $d=2$, $p=d$ if $d\ge3$), the absolute continuity of the integral implies
that there exists $r>0$ (independent of $m$) such that for all $m$
\begin{equation}\label{rchoice}
d^{\frac{1}{2}}\|\vv\|_{(L^p(E_r))^d}  \le \frac{1}{2}.
\end{equation}
Then from (\ref{|I2|}), (\ref{|I2|bis}) and (\ref{rchoice}) we obtain
\begin{equation}\label{|I2|2}
|I_2|\le \frac{C_3}{2n}\sum_{\sigma\in\E_{int}} \frac{\m(\sigma)}{d_{\sigma}}\,
(\um_K-\um_L)(T_n(\um_K)-T_n(\um_L)).
\end{equation}
Recalling that $-T_2 \leq - T_{2,1}$, the inequalities \eqref{|I2|2} and \eqref{|I1|} lead to
\begin{align}\label{term}
-T_2 & \le |I_1+I_2|\nonumber\\ & \le \frac{1}{n} \frac{r^2 d\, \|\,\vv
                                  \,\|_{(L^2(\Omega))^d}^2}{2\beta}
                                  +\frac{C_4}{2n} \sum_{\sigma\in\E_{int}}
                                  \frac{\m(\sigma)}{d_{\sigma}}
                                  (\um_K-\um_L)(T_n(\um_K)-T_n(\um_L)) ,
\end{align}
where $C_4$ is a positive constant depending on $\beta$ and $C_3$. Since $0 <
\mu \leq \lambda(u_{m})$, we choose $\beta >0$ such that the second term of the
right-hand side of \eqref{term} is $\leq \dfrac{T_1}{2}$. It follows that
\begin{align}\label{-T2 2}
-T_2 & \leq \frac{1}{n} \frac{r^2 d\, \|\,\vv \,\|_{(L^2(\Omega))^d}^2}{2\beta} + \dfrac{T_1}{2}
\nonumber \\
& \leq R(n,h_{\T_{m}}) + \dfrac{T_1}{2},
\end{align}
with $R$ verifying
$\displaystyle\lim_{n\rightarrow+\infty}\varlimsup_{h_{\T_{m}}\rightarrow0}R(n,h_{\T_{m}})=0$.\\
Since $T_1+T_2=T_3$, \eqref{-T2 2} allows one to conclude that
\begin{equation}
\lim_{n\rightarrow+\infty}\varlimsup_{h_{\M_m}\rightarrow0}
\frac{1}{n}\sum_{\sigma\in\E_{int}}\frac{\m(\sigma)}{d_{\sigma}}\lambda(u_{m})_{\sigma}(\um_K-\um_L)(T_n(\um_K)-T_n(\um_L))=0,
\end{equation}
which gives \eqref{energiediscrete}.
We are now in a position to prove (\ref{corenergiediscrete}). Recalling that $T_1+T_2=T_3$, we have $ T_1 + T_{2,2} \le |T_{2,1}| + T_3$. Since $T_{2,2}$ is non negative, using \eqref{limT3}, \eqref{-T2 2} we get
\begin{equation*}
T_1  \le \frac{C_5}{2}T_1 + R +T_3\ \textnormal{with}
\end{equation*}
\begin{align*}
& \lim_{n\rightarrow+\infty}\varlimsup_{h_{\T_{m}}\rightarrow 0} R=0, \\
& \lim_{n\rightarrow+\infty}\varlimsup_{h_{\T_{m}}\rightarrow 0} T_3=0.
\end{align*}
As a consequence we obtain \begin{equation}\label{fi}
  \lim_{n\rightarrow+\infty}\varlimsup_{h_{\T_{m}}\rightarrow 0} |T_{2,1}|=0.
\end{equation}
Moreover, writing again $ T_1 + T_{2,2} \le |T_{2,1}| + T_3$, \eqref{fi}, \eqref{limT3} and \eqref{energiediscrete} imply that
\begin{equation}\label{fii}
    \displaystyle\lim_{n\rightarrow+\infty}\varlimsup_{h_{\T_{m}}\rightarrow0}T_{2,2}=0.
    \end{equation}
Therefore from \eqref{fi} and \eqref{fii} we deduce \eqref{corenergiediscrete}.
\end{proof}{}

\section{Existence of a solution to the scheme}

In this section we prove  that there exists at least one solution to the
discrete scheme. Since the scheme is nonlinear we use a fixed point argument
together with the study of the linear version of our problem for which we adapt
the arguments developed in \cite{CHD11} for a linear problem with Neumann
boundary conditions and mean value.

\begin{proof}[Proof of Theorem \ref{exist}]
  The proof is divided into 2 steps. In Step 1 with the help of \cite{CHD11} we
  construct a map in view of the fixed point argument. In Step 2 using estimates
  in Proposition \ref{estlog} we conclude with the Brouwer fixed point theorem the
  existence of a solution.

  \noindent{\bfseries Step 1.}
  Let $\widetilde{u}=(\widetilde{u}_{K})_{K\in\T} \in X(\T)$ and let us consider the linear scheme
\begin{equation}\label{schemelin}
\forall K\in\M,\ \sum_{\sigma\in\E_{int}}\frac{\mathrm{m} (\sigma)}{d_{\sigma}}\lambda(\widetilde{u})_\sigma(u_K-u_L) + \sum_{\sigma\in\E_{int}}\m(\sigma) \vv_{K,\sigma} u_{\sigma,+}  = \int_K f\,\mathrm{d}x,
\end{equation}
where $u=(u_{K})_{K\in\T} \in X(\T)$ is the unknown. Following \cite{CHD11}, it can be rewritten as the linear system

\begin{equation}
  \label{systlin}
  \mA U=F
\end{equation}
where $U=(u_{K})_{K\in\mathcal{T}}$, $F=(\int_{K} f dx)_{K\in\mathcal{T}}$ and
$\mA$ is the square matrix of size $\card(\mathcal{T})\times \card(\mathcal{T})$
with
\[
  \left\{\begin{aligned}
&\mA_{K,K}=\sum_{\sigma\in\mathcal{E}_{K,int}}
\mathrm{m}(\sigma)\big(\frac{\lambda(\widetilde{u}_{\sigma})}{d_{\sigma}}  +
\vv_{K,\sigma}^{+}\big),\quad  \forall K\in\mathcal{T},\\
&\mA_{K,L}= \mathrm{m}(\sigma)\big(- \frac{\lambda(\widetilde{u}_{\sigma})}{d_{\sigma}}  -
 \vv_{K,\sigma}^{-}\big),\quad  \forall
K\in\mathcal{T},\forall L\in N(K),\text{ with }\sigma=K|L,\\
& \mA_{K,L}=0,\quad  \forall
K\in\mathcal{T},\forall L\notin N(K).
\end{aligned}
\right.
\]
At this step having $f$ belonging to  $L^{1}(\Omega)$ or $f\in L^{2}(\Omega)$
does not play any role. From Proposition~3.1 in \cite{CHD11} (see also
Remark~2.4 in \cite{CHD11} when $-\Delta u$ is replaced by $-\diw(a(x)\nabla
u)$, which
is the isotropic case)  it follows that
\begin{itemize}
\item $\dim(\ker(\mA))=1$ and any (non zero) element $U$ belonging to
  $\ker(\mA)$ verifies either $u_{K}>0$ for all $K\in\mathcal{T}$ or $u_{K}<0$
  for all $K\in\mathcal{T}$.
\item $\ker(\mA^{\top})=\R(1,1,\cdots,1)^{\top}$ and thus
  \[
    \operatorname{Im}(\mA)=\big\{ (F_{K})_{K\in\mathcal{T}}\,;\,
    \sum_{K\in\mathcal{T}} F_{K}=0\big\}.
  \]
\end{itemize}
Since $\int_{\Omega} f \dx=0$, $F$ belongs to $\operatorname{Im}(\mA)$ and then
there exists at least $\overline{U}$ solution of $\mA\overline{U}=F$. If
$V=(v_{K})_{K\in\mathcal{T}}$ denotes an element of $\ker(\mA)$ such that
$v_{k}>0$ for any $K\in\mathcal{T}$, for any $\lambda\in\R$, the vector
$\overline{U}+\lambda V$ is a solution of \eqref{systlin}. Since $v_{K}>0$,
$\forall K\in\mathcal{T}$, the function $\lambda\mapsto
\medu(\overline{U}+\lambda V)$ is continuous and increasing while
$\lim_{\lambda\rightarrow +\infty} \medu(\overline{U}+\lambda V)=+\infty$ and
$\lim_{\lambda\rightarrow -\infty} \medu(\overline{U}+\lambda V)=-\infty$. It
follows that there exists at least one solution $u\in X(\mathcal{T})$ verifying
the scheme \eqref{schemelin} and $\medu(u)=0$.

The uniqueness is a consequence of the characterization of $\ker(\mA)$.
As a conclusion we can define the map $\Gamma$  from $X(\mathcal{T})$
into $X(\mathcal{T})$ by
\[
\forall \widetilde{u}\in X(\mathcal{T}), \ \Gamma\big(\widetilde{u}\big)=u
\]
where $u\in X(\mathcal{T})$ is the unique solution of \eqref{schemelin} such that $\medu(u)=0$.
\par\smallskip
\noindent{\bfseries Step 2.} In this step we prove that
\begin{gather}
  \label{eq:exis0}
  \exists C>0,\forall \widetilde{u}\in X(\mathcal{T}),\quad
  \|\Gamma(\widetilde{u})\|_{L^{\infty}}\leq C,
  \\
  \label{eq:exis1}
  \Gamma \text{ is a continuous map}
\end{gather}
in order to apply the Brouwer fixed point theorem.

The boundedness of $\Gamma$ relies on Proposition
\ref{estlog}. Indeed by replacing $\lambda(u)_{\sigma}$ by
$\lambda(\widetilde{u})_{\sigma}$ in the proof of Proposition
\ref{estlog} it can be shown that there exists $C>0$ (not depending on
$\widetilde{u}$) such that
\begin{equation} \label{existestlogeq}
\| \ln (1 + |\Gamma(\widetilde{u})|)\|_{1,2,\M}^2 \le C\left(
  2\|f\|_{L^1(\Omega)} + d |\Omega|^{\frac{p-2}{p}}\,
  \|\,\vv\,\|^2_{\left(L^p(\Omega)\right)^d}\right),
\end{equation}
Since $\Gamma(\widetilde{u})$ lies in a finite dimension vector space we obtain that
\eqref{eq:exis0} holds true.

We now prove that $\Gamma$ is a continuous map. Let $(\widetilde{u}_{n})_{n\in\N}$ and
$\widetilde{u}$ belonging to $X(\mathcal{T})$ such that $\widetilde{u}_{n}$ goes
to $\widetilde{u}$ as $n$ goes to infinity. In view of \eqref{eq:exis0} up to a
subsequence, still indexed by $n$, there exists $w\in X(\mathcal{T})$ such that
$\Gamma(\widetilde{u}_{n})$ tends to $w$ as $n$ goes to infinity. Recalling that
the coefficient of the matrix $\mA$ are continuous with respect to
$\widetilde{u}$ we obtain that $w$ is a solution to the scheme
\begin{equation}\label{eq:exis2}
\forall K\in\M,\ \sum_{\sigma\in\E_{int}}\frac{\mathrm{m} (\sigma)}{d_{\sigma}}\lambda(\widetilde{u})_\sigma(w_K-w_L) + \sum_{\sigma\in\E_{int}}\m(\sigma) \vv_{K,\sigma} w_{\sigma,+}  = \int_K f\,\mathrm{d}x.
\end{equation}
The mesh $\mathcal{T}$ being fixed since we consider constant piecewise
functions the fact that $\medu\big(\Gamma(\widetilde{u}_{n})\big)=0$ for any $n\in \N$ implies
that  $\medu(w)=0$. Recalling that  $u=\Gamma(\widetilde{u})$ is the unique
solution of \eqref{schemelin} with $\medu(u)=0$ we conclude that $u=w$. Since
the limit does not depend on the subsequence we obtain that $\Gamma$ is a
continuous map.

In view of \eqref{eq:exis0} and \eqref{eq:exis1} the Brouwer fixed point theorem allows one to conclude the proof of Theorem \ref{exist}.
\end{proof}

\section{Convergence of the scheme}

In this section we prove using Section 4 that the function $u$ obtained in
Proposition \ref{EstTn} is a renormalized solution to equation \eqref{Pb1}. The
main difficulty is the equation \eqref{def3}: in short the test functions in
\eqref{def3} are of the kind $\varphi S(u)$ which is nonlinear in $u$. The
strategy is to mix renormalized techniques and finite volume with the use of a
discrete version of $\varphi S_n(u_{\mathcal{M}_m})$ (see below the definition
of $S_n$) where $\varphi$ belongs to $\mathcal{D}(\overline{\Omega})$ (see
\cite{leclavier} in the case of Dirichlet boundary condition). Before proving
Theorem \ref{conv} we give in Lemma \ref{lemme6.1} a convergence result
concerning $S_n(u_{\mathcal{M}_m})$ (see e.g. \cite{LL12,leclavier})  and in
Corollary \ref{corol1} the asymptotic behavior of an extra term (with respect to
the continuous case) which appears when we pass to the limit in the discrete
equation.


First, let us define the function $S_n$, $n\geq 1$,
by
\begin{equation}\label{S_n}
S_n(s)=\left\{ \begin{aligned} & 0 &\text{ if } s\le-2n, \\
& \frac{s}{n}+2 & \text{ if } -2n\le s\le -n,  \\
& 1 &\text{ if } -n\le s\le n, \\
& \frac{-s}{n}+2 & \text{ if } n\le s\le 2n,  \\
& 0 &\text{ if } s\ge 2n. \end{aligned}\right.
\end{equation}

\begin{figure}[!h]
	\begin{center}
		\begin{tikzpicture}
		\draw[->] (-6,0) -- (6,0);
		\draw (6,0) node[right] {$s$};
		\draw [->] (0,-1) -- (0,2);
		\draw (0,2) node[right] {$S_n(s)$};
		\draw[domain=-6:-4] plot(\x,0);
		\draw[domain=-4:-2] plot(\x,{(4-abs(\x))/2});
		\draw[domain=-2:2] plot(\x,1);
		\draw[domain=2:4] plot(\x,{(4-abs(\x))/2});
		\draw[domain=4:6] plot(\x,0);
		\draw (-4,0) node[below] {$-2n$};
		\draw (-2,0) node[below] {$-n$};
		\draw (2,0) node[below] {$n$};
		\draw (4,0) node[below] {$2n$};
		\draw (0,1.2) node[above,right] {$1$};
		\end{tikzpicture}
		\caption{The function $S_n$} \label{fig:h_n}
	\end{center}
\end{figure}
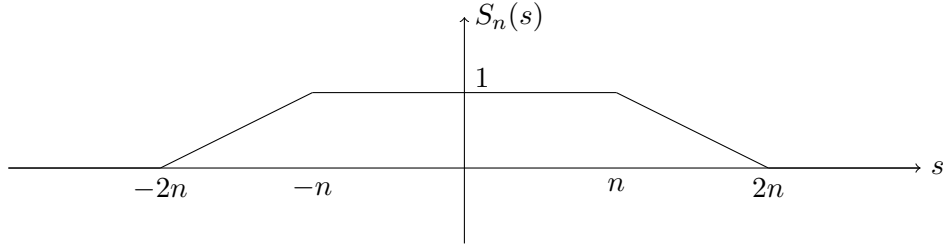

\begin{lemma}\label{lemme6.1}
Let $(\M_m)_{m\ge1}$ be a sequence of admissible meshes satisfying \eqref{cmesh}
such that $h_{\M_{m}}\rightarrow 0$ as $m\rightarrow \infty$
and let $u_{m}=(\um_{K})_{K\in\T_{m}}\in X(\T_m)$ be a sequence of solution of (\ref{scheme}) such
that the conclusions of Proposition \ref{EstTn} hold true.
We define the function $\Snbm$ over the diamonds by
\begin{align*}
\forall\sigma=K|L\in\E_{int},\forall x\in D_{\sigma}, & \quad \overline{S}^m_n(x)=\frac{S_n(\um_K)+S_n(\um_L)}{2},\\
\forall\sigma\in\E_{ext},\forall x\in D_{\sigma},& \quad \overline{S}^m_n(x)=S_n(\um_K),
\end{align*}
and the function $\overline{\lambda}^m$ by
\begin{equation*}
\forall\sigma\in\E,\forall x\in D_{\sigma}    \quad\overline{\lambda}^{m}(x)=\lambda(u_{m})_\sigma.
\end{equation*}
 Then the functions $\Snbm$ and $\overline{\lambda}$ converge respectively to $S_n(u)$ and $\lambda(u)$ in $L^q(\Omega)\ \forall q\in[1,+\infty[$ and in $L^\infty(\Omega)$ weak-$*$, as $h_{\M_m}\rightarrow0$, where $u$ is the limit of $u_{\M_m}$.
\end{lemma}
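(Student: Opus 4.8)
The plan is to reduce the statement to a single reconstruction principle and apply it twice, once with $\labm$ and once with $\Snbm$. Introduce the \emph{cell-wise} functions $w_m$ equal to $\lambda(\um_K)$ (resp. $S_n(\um_K)$) on each $K\in\T_m$, and set $w=\lambda(u)$ (resp. $w=S_n(u)$). Since $u_m\to u$ a.e. by \eqref{conver} and $\lambda,S_n$ are continuous, $w_m\to w$ a.e.; as $|w_m|$ is bounded (by $\lambda_\infty$, resp. by $1$) and $\Omega$ has finite measure, Lebesgue's dominated convergence gives $w_m\to w$ strongly in $L^q(\Omega)$ for every $q<+\infty$. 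This cell-wise convergence is the easy part; the real content is that the diamond reconstructions stay close to these cell-wise functions as $h_{\M_m}\to0$.

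For the diamond-versus-cell comparison, note that on $D_\sigma$, $\sigma=K|L\in\E_{int}$, both reconstructions take a value lying between $w_K$ and $w_L$: for $\Snbm$ this is the average $\tfrac12(w_K+w_L)$, and for $\labm$ it is $\lambda(u_m)_\sigma$, which satisfies \eqref{usigma2}. Comparing with the cell function (equal to $w_K$ on $D_{K,\sigma}$ and to $w_L$ on $D_{L,\sigma}$), the exterior diamonds reducing to $w_K$ and contributing nothing, one gets
\[
\|\overline{w}_m-w_m\|_{L^1(\Omega)}\le \sum_{\sigma\in\E_{int}}\m(D_\sigma)\,|w_K-w_L|.
\]
The key step is to bound this jump sum by the oscillation of $w_m$ over the diamond partition. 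Writing $P_m w_m$ for the $\m$-average of $w_m$ on each $D_\sigma$, a direct computation gives $\int_{D_\sigma}|w_m-P_mw_m|=2\,\m(D_\sigma)\,\tfrac{d_{K,\sigma}}{d_\sigma}\tfrac{d_{L,\sigma}}{d_\sigma}\,|w_K-w_L|$, and the mesh regularity \eqref{cmesh} forces $d_{K,\sigma}/d_\sigma\ge\xi$ and $d_{L,\sigma}/d_\sigma\ge\xi$, so that $\m(D_\sigma)|w_K-w_L|\le\frac{1}{2\xi^2}\int_{D_\sigma}|w_m-P_mw_m|$. Summing, $\|\overline{w}_m-w_m\|_{L^1}\le\frac{1}{2\xi^2}\|w_m-P_mw_m\|_{L^1}$.

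It then remains to show $\|w_m-P_mw_m\|_{L^1}\to0$. Inserting the limit $w$, one writes $\|w_m-P_mw_m\|_{L^1}\le\|w_m-w\|_{L^1}+\|w-P_mw\|_{L^1}+\|P_m(w-w_m)\|_{L^1}$; the first and third terms tend to $0$ because $w_m\to w$ in $L^1$ and $P_m$ is an $L^1$-contraction, while $\|w-P_mw\|_{L^1}\to0$ is the standard fact that averaging over partitions of vanishing diameter ($\operatorname{diam}D_\sigma\le C\,h_{\M_m}$) converges in $L^1$, proved first for $w\in C(\overline\Omega)$ via uniform continuity and then extended by density using the contraction bound. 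Hence $\overline{w}_m-w_m\to0$ in $L^1$, and with the cell-wise convergence $w_m\to w$ this yields $\overline{w}_m\to w$ in $L^1(\Omega)$.

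Finally, since $|\overline{w}_m|$ is bounded by $\lambda_\infty$ (resp. by $1$), the $L^1$ convergence upgrades to $L^q$ for every $q<+\infty$ through $\|\overline{w}_m-w\|_{L^q}^q\le(2\lambda_\infty)^{q-1}\|\overline{w}_m-w\|_{L^1}$, and the uniform $L^\infty$ bound then gives convergence weak-$\ast$ in $L^\infty$ by testing against $L^\infty\cap L^1$ functions and approximating a general $L^1$ test function. The main obstacle is precisely the jump-sum estimate of the second paragraph: because $u_m$ converges only in measure and $\lambda$ is merely continuous (not Lipschitz), one cannot control $|w_K-w_L|$ by passing through $u_m$; the device that saves the argument is to work with the bounded, a.e.-convergent composite functions $\lambda(u_m)$ and $S_n(u_m)$ and to trade the jump sum for the $L^1$-oscillation over the refining diamond partition, where \eqref{cmesh} supplies the crucial lower bound on the sub-diamond volumes. (For $\Snbm$ alone one could instead exploit the $1/n$-Lipschitz character of $S_n$ together with the a priori bound on $\|T_{2n}(u_m)\|_{1,2,\M_m}$ from Proposition~\ref{EstTn}, but the oscillation argument above is what covers the non-Lipschitz coefficient $\lambda$.)
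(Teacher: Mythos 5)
Your proof is correct, but it takes a genuinely different route from the paper's. The paper treats the two reconstructions separately: for $\Snbm$ it uses the $1/n$-Lipschitz character of $S_n$ through truncations, $|S_n(\um_K)-S_n(\um_L)|\le \frac1n|T_{2n}(\um_K)-T_{2n}(\um_L)|$, so that
\[
\|\Snbm-S_n(u_m)\|_{L^2(\Omega)}^2\le \frac{1}{4dn^2}\,|T_{2n}(u_m)|^2_{1,2,\M_m}\,(h_{\M_m})^2,
\]
which vanishes by the a priori bound of Proposition \ref{EstTn}; this is quantitative (an explicit $O(h_{\M_m})$ rate) and uses no mesh regularity. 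For $\labm$, since $\lambda$ is merely continuous, the paper instead introduces the diamond average $\overline{u}^m$, squeezes $\labm$ between $\lambda(u_m)$ and $\lambda(2\overline{u}^m-u_m)$ via \eqref{usigma2}, proves $\overline{u}^m\to u$ a.e.\ by a diagonal argument over truncations, and concludes by dominated convergence. You give instead a single unified, purely qualitative argument: compare each reconstruction to the cell-wise function, bound the jump sum $\sum_\sigma \m(D_\sigma)|w_K-w_L|$ by the $L^1$-oscillation of the cell function over the diamond partition, and let $h_{\M_m}\to0$ using only a.e.\ convergence of $u_m$, boundedness of $\lambda$ and $S_n$, and the refining-partition averaging lemma. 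Your key identity $\int_{D_\sigma}|w_m-P_mw_m|=2\m(D_\sigma)\frac{d_{K,\sigma}d_{L,\sigma}}{d_\sigma^2}|w_K-w_L|$ is correct, and \eqref{cmesh} does give $d_{K,\sigma}\ge\xi d_\sigma$ here because the orthogonality of the line $(x_K,x_L)$ to $\sigma$ identifies $d_{K,\sigma}$ with $d(x_K,\sigma)$. The trade-off is clean: your proof needs neither the discrete $H^1$ estimate on $T_{2n}(u_m)$ nor any special device for the non-Lipschitz coefficient $\lambda$ (both reconstructions are handled identically), but it invokes the mesh regularity \eqref{cmesh} and yields no convergence rate; the paper's argument for $\Snbm$ is regularity-free and quantitative, but must be supplemented by the sandwich construction to cover $\labm$.
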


\begin{proof}
Since the functions $\Snbm$ is bounded with respect to $m$ it is sufficient to
prove that the convergence holds true in $L^2(\Omega)$. By Proposition
\ref{EstTn} $u_{m}$ converges to $u$ a.e. in $\Omega$. Since $S_n$ is a
continuous and bounded function, the Lebesgue dominated convergence theorem
gives that $S_n(u_{m})\rightarrow S_n(u)$  in $L^2(\Omega)$, so that it is
sufficient to study the behavior of $ \Snbm - S_n(u_{m})$. Recalling that
$S_n$ is Lipschitz continuous and has a compact support, we have
\begin{align*}
\| \Snbm - S_n(u_{m}) \|^2_{L^2(\Omega)} &= \int_{\Omega} | \Snbm - S_n(u_{m}(x)) |^2\, \mathrm{d}\,x \\
&= \sum_{\sigma\in\E}\int_{D_{\sigma}}| \Snbm(x) - S_n(u_{m}(x)) |^2\, \mathrm{d}\,x \\
&= \sum_{\sigma\in\E_{int}}\int_{D_{\sigma}}\left|
    \frac{S_n(\um_K)+S_n(\um_L)}{2} - S_n(u_{m}(x)) \right|^2\, \mathrm{d}\,x \\
&= \frac{1}{4}\sum_{\sigma\in\E_{int}} \m(D_{\sigma})\, |S_n(\um_K)-S_n(\um_L)|^2 \\
&\le \frac{1}{4n^2}\sum_{\sigma\in\E_{int}} \m(D_{\sigma}) |T_{2n}(\um_K)-T_{2n}(\um_L)|^2 \\
&= \frac{1}{4dn^2} \sum_{\sigma\in\E_{int}}\m(\sigma)d_{\sigma}\, \left|\frac{T_{2n}(\um_K)-T_{2n}(\um_L)}{d_{\sigma}}\right|^2 \, (d_{\sigma})^2 \\
&\le \frac{1}{4dn^2} |T_{2n}(u_{m})|_{1,2,\M}^2\, (h_{\M_m})^2,
\end{align*}
so that $\| \Snbm - S_n(u_{m}) \|^2_{L^2(\Omega)}$ goes to zero as $h_{\M_m}\rightarrow0$.

As far as $\labm$ is concerned let us first define the function $\overline{u}^m$ by
\begin{align*}
    \forall\sigma=K|L\in\E_{int},  \forall x\in D_{\sigma}, &\quad \overline{u}^m(x)=\frac{\um_K+\um_L}{2},\\
    \forall\sigma\in\E_{ext},  \forall x\in D_{\sigma},
    &\quad \overline{u}^m(x)=\um_K.
\end{align*}
In view of the definition of $\lambda(u)_\sigma$ we have in $\Omega$
 \begin{equation}
\min(\lambda(u_{m}),\lambda(2\overline{u}^m-u_{m}))\leq \overline{\lambda}^m \leq \max(\lambda(u_{m}),\lambda(2\overline{u}^m-u_{m})).
\end{equation}
In view of already used arguments for $\overline{S}^m$, since the function $T_n$
is Lipschitz continuous, $T_n(\overline{u}^m)$ converges to $T_n(u)$ in
$L^q(\Omega)$, $\forall q\in[1,+\infty[$ and in $L^\infty$ weak-$*$. By the
diagonal process, up to a subsequence still index by $m$, $\overline{u}^m$ goes
to $u$ a.e. in $\Omega$ as $h_{\M_m}$ goes to zero. Since $\lambda$ is a bounded
continuous function we obtain, up to a subsequence, that $\overline{\lambda}^m$
converges to  $\lambda(u)$ in $L^q(\Omega)$, $\forall q\in[1,+\infty[$ and in
$L^\infty$ weak-$*$ as $h_{\M_m}$ goes to zero. To conclude it is sufficient to
observe that the limit of $\overline{\lambda}^m$ is independent of the
subsequence.
\end{proof}

\begin{corollary}\label{corol1}
Let $(\M_m)_{m\ge1}$ be a sequence of admissible meshes satisfying \eqref{cmesh}
such that $h_{\M_{m}}\rightarrow 0$ as $m\rightarrow \infty$.
Let $u_{m}=(\um_{K})_{K\in\T_{m}}\in X(\T_m)$ be a sequence of solution of (\ref{scheme}) such
that the conclusions of Proposition \ref{EstTn} hold true. Then we have
\begin{equation}\label{corol2}
\lim_{n\rightarrow+\infty}\varlimsup_{h_{\M_{m}}\rightarrow 0}
\sum_{\substack{\sigma\in\E_{int} \\ |\um_K|\le 2n \\ |\um_L|>4n}}
\lambda(u)_\sigma\frac{\m(\sigma)}{d_{\sigma}} |\um_L| =0.
\end{equation}

\end{corollary}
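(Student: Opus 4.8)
The plan is to dominate the sum in \eqref{corol2} by the discrete energy decay \eqref{energiediscrete} of Proposition~\ref{propestdiscrete}, exploiting the fact that on the edges under consideration the jump $|\um_K-\um_L|$ is large and comparable to $|\um_L|$. First I would record the two elementary consequences of $\mu\le\lambda\le\lambda_\infty$: for every edge $\sigma$ one has $\lambda(u)_\sigma\le\lambda_\infty$ and $\lambda(u_m)_\sigma\ge\mu$. Writing
\[
\mathcal{J}_n^m=\Big\{\sigma=K|L\in\E_{int}\tq |\um_K|\le 2n,\ |\um_L|>4n\Big\},
\]
it is therefore enough to estimate $\sum_{\sigma\in\mathcal{J}_n^m}\tfrac{\m(\sigma)}{d_\sigma}|\um_L|$ and to reinsert the coefficients at the cost of the factor $\lambda_\infty/\mu$.

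The heart of the argument is a pointwise inequality valid on each $\sigma\in\mathcal{J}_n^m$, namely
\[
(\um_K-\um_L)\big(T_{4n}(\um_K)-T_{4n}(\um_L)\big)\ge n\,|\um_L|.
\]
I would prove it by distinguishing the sign of $\um_L$. If $\um_L>4n$, then $|\um_K|\le 2n\le 4n$ gives $T_{4n}(\um_K)=\um_K$ while $T_{4n}(\um_L)=4n$, so the left-hand side equals $(\um_L-\um_K)(4n-\um_K)$; here $4n-\um_K\ge 2n$, and since $\um_L>4n$ forces $2n<|\um_L|/2$ one also has $\um_L-\um_K\ge \um_L-2n>|\um_L|/2$, whence the product is $\ge 2n\cdot|\um_L|/2=n|\um_L|$. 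The case $\um_L<-4n$ is symmetric, using $T_{4n}(\um_L)=-4n$ and $\um_K-\um_L=\um_K+|\um_L|>|\um_L|/2$.

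Using this inequality together with $\lambda(u)_\sigma\le\lambda_\infty$, $\lambda(u_m)_\sigma\ge\mu$, and the nonnegativity of each factor $(\um_K-\um_L)(T_{4n}(\um_K)-T_{4n}(\um_L))$, I would obtain
\[
\sum_{\sigma\in\mathcal{J}_n^m}\lambda(u)_\sigma\frac{\m(\sigma)}{d_\sigma}|\um_L|
\le\frac{\lambda_\infty}{\mu}\,\frac1n\sum_{\sigma\in\E_{int}}\lambda(u_m)_\sigma\frac{\m(\sigma)}{d_\sigma}(\um_K-\um_L)\big(T_{4n}(\um_K)-T_{4n}(\um_L)\big),
\]
where enlarging the index set from $\mathcal{J}_n^m$ to all of $\E_{int}$ only increases the right-hand side. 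The resulting quantity is exactly $4$ times the expression appearing in \eqref{energiediscrete} with $n$ replaced by $4n$. Taking $\varlimsup_{h_{\M_m}\to0}$ and then $n\to\infty$ (so that $4n\to\infty$) makes the right-hand side vanish by \eqref{energiediscrete}, which yields \eqref{corol2}.

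I expect the only delicate point to be the pointwise inequality: the truncation level must be chosen as $4n$, matching the threshold $|\um_L|>4n$ and dominating $|\um_K|\le 2n$, so that $T_{4n}$ saturates on $\um_L$ but not on $\um_K$, producing precisely the factor $n$ that absorbs the $1/n$ of the energy decay. Everything else reduces to the global bound on $\lambda$ and a direct appeal to \eqref{energiediscrete}.
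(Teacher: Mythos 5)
Your proof is correct and follows essentially the same route as the paper: the same pointwise inequality $(\um_K-\um_L)\big(T_{4n}(\um_K)-T_{4n}(\um_L)\big)\ge n|\um_L|$ on the edges with $|\um_K|\le 2n$, $|\um_L|>4n$, followed by enlarging the sum to all of $\E_{int}$ (using nonnegativity of each term) and invoking the discrete energy decay \eqref{energiediscrete} at level $4n$. In fact your constant bookkeeping is slightly more careful than the paper's (whose factor $\tfrac{1}{4n}$ in \eqref{ineq1} should be $\tfrac1n$, a harmless discrepancy), and your insertion of the factor $\lambda_\infty/\mu$ cleanly handles the coefficient $\lambda(u)_\sigma$ versus $\lambda(u_m)_\sigma$, which the paper glosses over.
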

\begin{proof}
 For $m\in \N$, let us consider $K\in\T_{m}$ and $n\in\N$.
On one hand if $|\um_K|\le 2n$ and   $\um_L>4n$ then
\[
  (\um_K-\um_L)(T_{4n}(\um_K)-T_{4n}(\um_L)) \ge \frac{\um_L}{2} 2n \ge 0.
\]
On the other hand, if $|\um_K|\le 2n$ and $\um_L<-4n$ then
\[
  (\um_K-\um_L)(T_{4n}(\um_K)-T_{4n}(\um_L)) \ge \frac{-\um_L}{2} 2n \ge 0.
\]
It follows that
\begin{equation}\label{ineq1}
\sum_{\substack{\sigma\in\E_{int} \\ |\um_K|\le 2n \\ |\um_L|>4n}}
\frac{\m(\sigma)}{d_{\sigma}}\lambda(u_{m})_\sigma |\um_L| \le
\frac{1}{4n}\sum_{\sigma\in\E_{int}}
\frac{\m(\sigma)}{d_{\sigma}}\lambda(u_{m})_\sigma (\um_L-\um_K)
(T_{4n}(\um_K)-T_{4n}(\um_L)).
\end{equation}
Using the discrete estimate on energy  \eqref{energiediscrete}, we have
\begin{equation} \label{ineq2}
\lim_{n\rightarrow+\infty}\lim_{h_{\M_{m}}\rightarrow0}\frac{1}{4n}\sum_{\sigma\in\E_{int}}\lambda(u_{m})_\sigma
\frac{\m(\sigma)}{d_{\sigma}} (\um_L-\um_K) (T_{4n}(\um_K)-T_{4n}(\um_L))=0,
\end{equation}
so that \eqref{ineq1} and \eqref{ineq2} give \eqref{corol2}.
\end{proof}

We are now in a position to prove Theorem \ref{conv}.

\begin{proof2}{\bf{of Theorem \ref{conv}}}
Let $m\geq 1$ and let us consider $u_m=(\um_K)_{K\in \T_m}$ be  a solution of the scheme \eqref{scheme}.

For $\varphi$ a function belonging to  $\mathcal{D}(\overline{\Omega})$ we  denote by $\varphi_{m}$ the function defined by $\varphi_K=\varphi(x_K)$ for all $K\in\T_m$.
\noindent For $n\in\N$, multiplying each equation of the scheme by $\varphi(x_K)S_n(\um_K)$ (which is a discrete version of the test function used in the renormalized formulation), summing over the control volumes and gathering by edges, we get $$T_1+T_2=T_3$$ with
\begin{align*}
T_1 &= \sum_{\sigma\in\E_{int}}\frac{\m(\sigma)}{d_{\sigma}}\lambda(u_m)_\sigma(\um_K-\um_L)(\varphi(x_K)S_n(\um_K)-\varphi(x_L)S_n(\um_L)), \\
T_2 &= \sum_{\sigma\in\E_{int}} \m(\sigma) \vv_{K,\sigma} \um_{\sigma,+}(\varphi(x_K)S_n(\um_K)-\varphi(x_L)S_n(\um_L)), \\
T_3 &= \sum_{K\in\T} \int_K f\, \varphi(x_K)S_n(\um_K).
\end{align*}

  Since $S_n(u_m)\rightarrow S_n(u)$ a.e. and $L^{\infty}$ weak $\star$, by the
  regularity of $\varphi$, $\varphi_{m}\rightarrow\varphi$ uniformly and
  $|f\,\varphi_{m}\,S_n(u_{m})|\le C_{\varphi} |f|\in L^1(\Omega)$, the
  Lebesgue theorem ensures that
\begin{equation}\label{cvT3}
T_3 = \int_{\Omega}f\,\varphi_{m}\,S_n(u_{m})\,\mathrm{d}\,x \xrightarrow[h_{{\M}_m}\rightarrow0]\ \int_{\Omega}f\,\varphi\,S_n(u)\,\mathrm{d}\,x.
\end{equation}

We now study the convergence of the diffusion term. We write
\begin{align*}
T_1 &= \sum_{\sigma\in\E_{int}} \frac{\m(\sigma)}{d_{\sigma}}\lambda(u_m)_\sigma (\um_K-\um_L) (\varphi(x_K)S_n(\um_K)-\varphi(x_L)S_n(\um_L)) \\
&= T_{1,1} + T_{1,2}
\end{align*}
with
\begin{align*}
T_{1,1} &= \sum_{\sigma\in\E_{int}} \frac{\m(\sigma)}{d_{\sigma}}\lambda(u_m)_\sigma  S_n(\um_K)\,(\um_K-\um_L)\,(\varphi(x_K)-\varphi(x_L)), \\
T_{1,2} &= \sum_{\sigma\in\E_{int}} \frac{\m(\sigma)}{d_{\sigma}}\lambda(u_m)_\sigma  \varphi(x_L)\, (\um_K-\um_L)\, (S_n(\um_K)-S_n(\um_L)).
\end{align*}
According to the definition of $S_n$ we have $$|T_{1,2}|\le\frac{1}{n}\sum_{\sigma\in\E_{int}} \frac{\m(\sigma)}{d_{\sigma}}\lambda(u_m)_\sigma \varphi(x_L)\, (\um_K-\um_L)\, (T_{2n}(\um_K)-T_{2n}(\um_L)),$$
so that \eqref{energiediscrete} give
\begin{equation}\label{cvT1,2}
\lim_{n\rightarrow+\infty}\varlimsup_{h_{{\M}_m}\rightarrow 0} T_{1,2}=0.
\end{equation}

The main difference with respect to the continuous case is that $\um_K$ is truncated while $u_L$ is not in $T_{1,1}$. To control this term we have to write
\begin{align*}
T_{1,1} &=  \sum_{\sigma\in\E_{int}} \frac{\m(\sigma)}{d_{\sigma}}\lambda(u_m)_\sigma  S_n(\um_K)\,(T_{2n}(\um_K)-\um_L)\,(\varphi(x_K)-\varphi(x_L)), \\
&= I + II + III
\end{align*}
with
\begin{align*}
I &= \sum_{\sigma\in\E_{int}} \frac{\m(\sigma)}{d_{\sigma}}\lambda(u_m)_\sigma \frac{S_n(\um_K)+S_n(\um_L)}{2}\,(T_{4n}(\um_K)-T_{4n}(\um_L))\,(\varphi(x_K)-\varphi(x_L)), \\
II &= \sum_{\sigma\in\E_{int}} \frac{\m(\sigma)}{d_{\sigma}}\lambda(u_m)_\sigma S_n(\um_K)\,(T_{4n}(\um_L)-\um_L)\,(\varphi(x_K)-\varphi(x_L)), \\
III &=\sum_{\sigma\in\E_{int}}\frac{\m(\sigma)}{d_{\sigma}}\lambda(u_m)_\sigma\frac{S_n(\um_K)-S_n(\um_L)}{2}\,(T_{4n}(\um_K)-T_{4n}(\um_L))\,(\varphi(x_K)-\varphi(x_L))
\end{align*}
We first study the asymptotic behavior of $II$ and $III$ as the parameter $h_{{\M}_n}$ goes to zero. Since
\begin{align*}
|II| &\le \sum_{\sigma\in\E_{int}} \frac{\m(\sigma)}{d_{\sigma}}\lambda(u)_\sigma |S_n(\um_K)|\,|T_{4n}(\um_L)-\um_L|\,|\varphi(x_K)-\varphi(x_L)| \\
&\le 2 \|\varphi\|_{L^{\infty}(\Omega)} \sum_{\substack{\sigma\in\E_{int} \\ |\um_K|\le 2n \\ |\um_L|>4n}} \frac{\m(\sigma)}{d_{\sigma}} \lambda(u_m)_\sigma|\um_L|,
\end{align*}
and due to Corollary \ref{corol1} we obtain that
\begin{equation}\label{cvII}
\varlimsup_{h_{\M}\rightarrow 0} |II|\leq \omega(n) \|\varphi\|_{L^\infty(\Omega)},
\end{equation}
where $\omega(n)$ tends to zero as $n$ goes to infinity. In the sequel of the present proof $\omega(n)$ is a generic positive quantity such that $\lim_{n \rightarrow + \infty} w(n)=0$.
According to the definition of $S_n$ we have
\begin{equation*}
| III | \le \frac{\|\varphi\|_{L^{\infty}(\Omega)}}{n} \sum_{\sigma\in\E_{int}} \frac{\m(\sigma)}{d_{\sigma}}\lambda(u_m)_\sigma |\um_K-\um_L|\, |T_n(\um_K)-T_n(\um_L)|,
\end{equation*}
recalling \eqref{energiediscrete} of Proposition \ref{propestdiscrete} we obtain that
\begin{equation}\label{cvIII}
\varlimsup_{h_{\T}\rightarrow 0} |III| \leq \omega(n) \|\varphi\|_{L^\infty(\Omega)}.
\end{equation}
We now turn to $I$. By rewriting $I$ as integral over the diamonds $D_\sigma$ (see e.g. \cite{zbMATH01970883,zbMATH02027732,zbMATH05490416})
we have
\begin{align*}
I = {}& \sum_{\sigma\in\E_{int}} \m(\sigma) d_{\sigma}\lambda(u_m)_\sigma \frac{S_n(\um_K)+S_n(\um_L)}{2}\, \frac{T_{4n}(\um_K)-T_{4n}(\um_L)}{d_{\sigma}}\, \frac{\varphi(x_K)-\varphi(x_L)}{d_{\sigma}} \\
= {}& \sum_{\sigma\in\E_{int}} \int_{D_{\sigma}} \labm(x)\Snbm(x) \nabla_{{\M}_m} T_{4n}(u_{m}) \cdot \nabla \varphi\, \mathrm{d}\,x \\
&{}  \begin{multlined}
{} + \sum_{\sigma\in\E_{int}} d \m(D_{\sigma})\lambda(u_m)_\sigma \frac{S_n(\um_K)+S_n(\um_L)}{2}\, \frac{T_{4n}(\um_K)-T_{4n}(\um_L)}{d_{\sigma}}
\\ \times
\left[\frac{\varphi(x_K)-\varphi(x_L)}{d_{\sigma}} + \frac{1}{\m(D_{\sigma})} \int_{D_{\sigma}} \nabla\varphi \cdot \eta_{K,\sigma}\, \mathrm{d}\,x\right]
\end{multlined}
\\
= {} & I_{1} + I_{2}.
\end{align*}
By Lemma \ref{lemme6.1} $\Snbm \rightarrow S_n(u)$ in $
L^q(\Omega)$ for all $q \in [1,+\infty[$ and $\labm \rightarrow\lambda(u)$ in
$L^\infty $ weak $ \star$, while $\nabla_{{\M}_{m}}T_{4n}(u_m)$ tends to $\nabla
T_{4n}(u)$ weakly in $(L^2(\Omega))^d$. Since $\varphi$ belongs to
$C^\infty_c(\Omega)$ we conclude that
\begin{equation*}\label{cvI1}
    \lim_{h_{{\M}_m}\rightarrow 0} I_1 = \int_\Omega \lambda(u)S_n(u)\nabla T_{4n}(u) \cdot \nabla \varphi \,\mathrm{d} x.
\end{equation*}
By the regularity of $\varphi$ we see that
\begin{align*}
|I_2| &\le \sum_{\sigma\in\E_{int}} d \m(D_{\sigma})\lambda(u_m)_\sigma \left|\frac{S_n(\um_K)+S_n(\um_L)}{2}\right| \frac{|T_{4n}(\um_K)-T_{4n}(\um_L)|}{d_{\sigma}}
\\
& \qquad\qquad\qquad \times\left|\frac{\varphi(x_K)-\varphi(x_L)}{d_{\sigma}} + \frac{1}{\m(D_{\sigma})} \int_{D_{\sigma}} \nabla\varphi \cdot \eta_{K,\sigma}\, d\,x\right| \\
&\le \lambda_\infty \|\varphi\|_{W^{2,\infty}(\Omega)} h_{\M} \|T_{4n}(u_{m})\|_{1,1,\M},
\end{align*}
thus
\begin{equation}\label{cvI2}
\lim_{h_{{\M}_m}\rightarrow 0} I_2 =0.
\end{equation}

We now study the convergence of the convection term $T_2$. We have
\begin{align*}
T_2 &= \sum_{\sigma\in\E_{int}} \m(\sigma) \vv_{K,\sigma} \um_{\sigma,+}(\varphi(x_K)S_n(\um_K)-\varphi(x_L)S_n(\um_L)) \\
&= \sum_{\substack{\sigma\in\E_{int} \\ \vv_{K,\sigma}\ge 0}} \m(\sigma)\vv_{K,\sigma} \um_{\sigma,+}(\varphi(x_K)S_n(\um_{\sigma,+})-\varphi(x_L)S_n(\um_{\sigma,-})) \\
&\ \ \ \ + \sum_{\substack{\sigma\in\E_{int} \\ \vv_{K,\sigma}< 0}} \m(\sigma)\vv_{K,\sigma} \um_{\sigma,+}(\varphi(x_K)S_n(\um_{\sigma,-})-\varphi(x_L)S_n(\um_{\sigma,+})) \\
&= \sum_{\substack{\sigma\in\E_{int} \\ \vv_{K,\sigma}\ge 0}} \m(\sigma)\vv_{K,\sigma} \um_{\sigma,+}S_n(\um_{\sigma,+})(\varphi(x_K)-\varphi(x_L)) \\
&\ \ \ \ + \sum_{\substack{\sigma\in\E_{int} \\ \vv_{K,\sigma}\ge 0}} \m(\sigma)\vv_{K,\sigma} \um_{\sigma,+}\varphi(x_L)(S_n(\um_{\sigma,+})-S_n(\um_{\sigma,-})) \\
&\ \ \ \ - \sum_{\substack{\sigma\in\E_{int} \\ \vv_{K,\sigma}< 0}} \m(\sigma)\vv_{K,\sigma} \um_{\sigma,+}S_n(\um_{\sigma,+})(\varphi(x_L)-\varphi(x_K)) \\
&\ \ \ \ - \sum_{\substack{\sigma\in\E_{int} \\ \vv_{K,\sigma}< 0}} \m(\sigma)\vv_{K,\sigma} \um_{\sigma,+}\varphi(x_K)(S_n(\um_{\sigma,+})-S_n(\um_{\sigma,-})) \\
&= T_{2,1}+T_{2,2}+T_{2,3}
\end{align*}
with
\begin{align*}
T_{2,1} &=\sum_{\sigma\in\E_{int}} \m(\sigma)\vv_{K,\sigma} \um_{\sigma,+}S_n(\um_{\sigma,+})(\varphi(x_K)-\varphi(x_L)) \\
T_{2,2} &=\sum_{\substack{\sigma\in\E_{int} \\ \vv_{K,\sigma}\ge 0}} \m(\sigma)\vv_{K,\sigma} \um_{\sigma,+}\varphi(x_L)(S_n(\um_{\sigma,+})-S_n(\um_{\sigma,-})) \\
T_{2,3} &= - \sum_{\substack{\sigma\in\E_{int} \\ \vv_{K,\sigma}< 0}} \m(\sigma)\vv_{K,\sigma} \um_{\sigma,+}\varphi(x_K)(S_n(\um_{\sigma,+})-S_n(\um_{\sigma,-}))
\end{align*}
Since
\[
  |T_{2,2}+T_{2,3}| \le
  \frac{\|\varphi\|_{L^{\infty}(\Omega)}}{n}\sum_{\sigma\in\E_{int}}
  \m(\sigma)\,|\vv_{K,\sigma}|\,
  |\um_{\sigma,+}|\,|T_{2n}(\um_{\sigma,+})-T_{2n}(\um_{\sigma,-})|,
\]
we deduce from \eqref{corenergiediscrete} that
\begin{equation}\label{cvT2,2+T2,3}
\lim_{n\rightarrow+\infty}\varlimsup_{h_{{\M}_{m}}\rightarrow 0}T_{2,2}+T_{2,3}=0.
\end{equation}
For the term $T_{2,1}$, we have
\begin{align*}
T_{2,1} &= \sum_{\sigma\in\E_{int}} \m(\sigma)\vv_{K,\sigma} \um_{\sigma,+}S_n(\um_{\sigma,+})(\varphi(x_K)-\varphi(x_L)) \\
&= \sum_{\sigma\in\E_{int}} \frac{\m(\sigma)d_{\sigma}}{d\,\m(D_{\sigma})}\,  \um_{\sigma,+}\,S_n(\um_{\sigma,+})\, d\frac{\varphi(x_K)-\varphi(x_L)}{d_{\sigma}}\int_{D_{\sigma}} \vv\cdot\eta_{K,\sigma}\,\mathrm{d}x \\
&=-\sum_{\sigma\in\E_{int}} \int_{D_{\sigma}} T_{2n}(\um_{\sigma,+})S_n(\um_{\sigma,+})\vv\cdot\nabla_{\M}\varphi_{\M}\,\mathrm{d}x.
\end{align*}
We define the function $\overline{G}_n^m$ defined over the diamonds by
\begin{equation*}
    \forall\sigma=K|L\in\E_{int},\forall x\in D_{\sigma}, \quad \overline{G}_n^m(x)=
    T_{2n}(\um_{\sigma,+})S_n(\um_{\sigma,+})
\end{equation*}
Then $T_{2,1}$ reads as
\begin{equation*}
    T_{2,1}= - \int_\Omega \overline{G}_n^m\vv\cdot\nabla_{{\M}_m}\varphi_{m}\,\mathrm{d}x.
\end{equation*}
Since the function $r\mapsto T_{2n}(r)S_n(r)$ is Lipschitz continuous and
bounded, with the help of arguments already used in the proof of Lemma
\ref{lemme6.1}, we can show that $\overline{G}_n^m$ converges to
$T_{2n}(u)S_n(u)$ in $L^\infty$ weak $\star$, as $h_{{\M}_{m}} \rightarrow 0$. Recalling
that $\nabla_{{\M}_{m}}\varphi_{m}$ converges weakly in $(L^2(\Omega))^d$ we obtain
\begin{equation}\label{eqt6.12}
    \lim_{h_{{\M}_{m}} \rightarrow 0} T_{2,1}= - \int_\Omega T_{2n}(u)S_n(u) \vv \cdot \nabla \varphi \, \mathrm{d}x.
    \end{equation}
We now  pass to the limit in the scheme first as $h_{{\M}_{m}}$ goes to zero and then as $n$ goes to infinity.
Gathering equations \eqref{cvT3} to \eqref{eqt6.12}, allows one to conclude that
\begin{multline}\label{limh_T}
\int_{\Omega}\lambda(u)S_n(u)\,\nabla u\cdot\nabla\varphi\,\mathrm{d}x -\int_{\Omega}u\,S_n(u)\,\vv\cdot\nabla\varphi\,\mathrm{d}x
- \int_{\Omega} f\,\varphi\,S_n(u)\,\mathrm{d}x = \lim_{h_{{\M}_{m}}\rightarrow 0} T(n,\varphi)
\end{multline}
where $\displaystyle\lim_{h_{{\M}_{m}}\rightarrow 0} |T(n,\varphi)| \le \|\varphi\|_{L^{\infty}(\Omega)} \omega(n)$ with $\omega(n)\rightarrow0$ as $n\rightarrow+\infty$. Since $S_n(u)\lambda(u)\nabla u$, $u S_n(u)\vv$ and $f S_n(u)$
belongs respectively to $(L^2(\Omega))^d$, $L^2(\Omega)$ and $L^1(\Omega)$ a density argument gives that \eqref{limh_T} holds true for any $\varphi$ lying in $H^1(\Omega)\cap L^\infty(\Omega)$.

Let $S$ be a function in $W^{1,\infty}(\mathbb{R})$ with compact support, contained in the interval $[-k,k]$, $k>0$ and let $\psi\in H^1(\Omega)\cap L^\infty(\Omega)$. Using the function $S(u)\psi$ in \eqref{limh_T}, we deduce that
\begin{multline*}
\bigg| \int_{\Omega}\lambda(u)\nabla u\, S_n(u)S(u)\nabla\psi\,\mathrm{d}x + \int_{\Omega}\lambda(u)\nabla u\, S_n(u)\psi\nabla u\, S'(u)\,\mathrm{d}x  \\
 -\int_{\Omega}u\,S_n(u)\,S(u)\,\vv\cdot\nabla\psi\,\mathrm{d}x - \int_{\Omega}u\,S_n(u)\,S'(u)\,\psi\,\vv\cdot\nabla u\,\mathrm{d}x \\
  - \int_{\Omega}\psi S(u) S_n(u) f\, \mathrm{d}x \bigg|
 \le \|\varphi\|_{L^{\infty}(\Omega)} \omega(n).
\end{multline*}
By observing that $S_n(u) S(u)=S(u)$ and $S_n(u)S'(u)=S'(u)$ a.e. in $\Omega$ for $n$ sufficiently large,  by passing to the limit as $n$ goes to infinity we obtain the condition \eqref{def3} of Definition \ref{def}, that is
\begin{multline*}
\int_{\Omega}\lambda(u)\nabla u\,S(u)\nabla\psi\,\mathrm{d}x + \int_{\Omega}\lambda(u)\nabla u\, \psi\nabla u\, S'(u)\,\mathrm{d}x  \\
-\int_{\Omega}u\,S(u)\,\vv\cdot\nabla\psi\,\mathrm{d}x - \int_{\Omega}u\,S'(u)\,\psi\,\vv\cdot\nabla u\,\mathrm{d}x  = \int_{\Omega}\psi S(u)\, f\, \mathrm{d}x.
\end{multline*}

We now turn to the decay of the energy. As a consequence of \eqref{energiediscrete} we get
\begin{equation*}
\lim_{n\rightarrow\infty}\varlimsup_{h_{{\M}_{m}}\rightarrow 0}\frac{1}{n}\sum_{\sigma\in\E_{int}}\lambda(u)\frac{\m(\sigma)}{d_{\sigma}}(T_{2n}(\um_K)-T_{2n}(\um_L))^2 = 0,
\end{equation*}
and
\begin{align*}
\sum_{\sigma\in\E_{int}}\frac{\m(\sigma)}{d_{\sigma}}(T_{2n}(\um_K)-T_{2n}(\um_L))^2 &= \sum_{\sigma\in\E_{int}}\m(\sigma)d_{\sigma}\left(\frac{T_{2n}(\um_K)-T_{2n}(\um_L)}{d_{\sigma}}\right)^2 \\
&= \sum_{\sigma\in\E_{int}}d \m(D_{\sigma})\left(\frac{T_{2n}(\um_K)-T_{2n}(\um_L)}{d_{\sigma}}\right)^2 \\
&= \frac{1}{d}\int_{\Omega}|\nabla_{{\M}_{m}} T_{2n}(u_{m})|^2\,\mathrm{d}x.
\end{align*}
so that
$\displaystyle\lim_{n\rightarrow\infty}\varlimsup_{h_{{\M}_{m}}\rightarrow
  0}\frac{1}{n}\int_{\Omega}|\nabla_{{\M}_{m}} T_{2n}(u_{m})|^2\,\mathrm{d}x =
0$. Since $\nabla_{{\M}_{m}} T_{2n}(u_{m})$ converges weakly in $(L^2(\Omega))^d$, we have
also
\begin{equation*}
\frac{1}{n}\int_{\Omega}|\nabla T_{2n}(u)|^2\,\mathrm{d}x \le \liminf_{h_{\M}\rightarrow 0}\frac{1}{n}\int_{\Omega}|\nabla_{\M} T_{2n}(u_{\M})|^2\,\mathrm{d}x,
\end{equation*}
which leads to
\begin{equation*}
\lim_{n\rightarrow\infty}\frac{1}{n}\int_{\Omega}\lambda(u)|\nabla T_{2n}(u)|^2\,\mathrm{d}x=0.
\end{equation*}
Since $u$ is finite almost everywhere in $\Omega$ and since $T_n(u)\in H^{1}(\Omega)$ for any $n>0$ we can conclude $u_{\M_m}$ converges to $u$ which is is the unique renormalized solution with null median.
\end{proof2}

\appendix
\section{Appendix}
Discrete functional inequalities are useful for the study of finite volume schemes. Discrete Sobolev inequalities are proved in \cite{refId0} for Dirichlet boundary conditions and in \cite{CHD11} for non Dirichlet boundary conditions. In \cite{zbMATH06476912} discrete Gagliardo-Nirenberg-Sobolev and Poincaré-Sobolev inequalities for some finite volume schemes are proved. There does not seem to be any proof of Discrete Poincaré-Wirtinger median inequality. The authors in \cite{zbMATH06476912} use the continuous embedding of the space $BV(\Omega)$ into $L^{\frac{d}{d-1}}(\Omega)$ for a Lipschitz domain $\Omega \subset \mathbb{R}^d$, with $d\geq 2$ to establish discrete inequalities. We will use this method to prove the Discrete Poincaré-Wirtinger median inequality \eqref{PW}.

Let us first recall some results concerning functions of bounded variation (more details about these functions can be found in \cite{Z}). Let $\Omega$ be an open set of $\R^d$ and $u\in L^1(\Omega)$. The total variation of $u$ in $\Omega$, denoted by $TV_\Omega(u)$, is defined by
$$ TV_\Omega(u)=\sup \left \{\int_\Omega u(x)\mathrm{div}(\phi(x))\,\mathrm{d}x,\; \phi \in C^1_c(\Omega),\;|\phi(x)|\leq 1, \; \forall x\in \Omega \right \}.$$
The function $u\in L^1(\Omega)$ belongs to $BV(\Omega)$ if and only if $TV_\Omega(u)<+\infty$. The space $BV(\Omega)$ is endowed with the norm
\[
  \|u\|_{BV(\Omega)}:=\|u\|_{L^1(\Omega)}+TV_{\Omega}(u).
\]
The space $BV(\Omega)$ is a natural space to study finite volume
approximations. Indeed, for $u=(u_{K})_{K\in\T}\in X(\mathcal{T})$, we
have
\[
  TV_\Omega(u)=\sum_{\underset{\sigma=K|L}{\sigma\in
      \E_{int}}}\m(\sigma)|u_L-u_K|=|u|_{1,1,\M}<+\infty.
\]
The discrete space
$X(\T)$ is included in $L^1(\Omega)\cap BV(\Omega)$ and we have
$$\|u\|_{BV(\Omega)}=\|u\|_{1,1,\M}.$$
Our starting point for the discrete Poincaré-Wirtinger median inequality is the continuous embedding of $BV(\Omega)$ into $L^{\frac{d}{d-1}}(\Omega)$ for Lipschitz bounded connected domain $\Omega$ of $\R^{d}$, $d\ge2$, written in the following theorem (see \cite{Z} for more details).
\begin{thm}[\cite{Z}]\label{TBV}
There exists a constant $C(\Omega)>0$ only depending on $\Omega$ such that, for all $u\in BV(\Omega)$,
\begin{equation}\label{BV}
    \left(\int_\Omega|u-m|^{\frac{d}{d-1}}\,\mathrm{d}x \right)^{\frac{d-1}{d}}\leq C(\Omega) \,TV_\Omega(u),
\end{equation} where $m\in \med(u)$.
\end{thm}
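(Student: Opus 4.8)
The plan is to deduce the median-centered inequality \eqref{BV} from a one-sided Sobolev--Poincaré estimate for nonnegative $BV$ functions that vanish on at least half of $\Omega$, and then to recombine the two one-sided pieces by means of the coarea formula. First I would split $u-m$ into its positive and negative parts, writing $v=(u-m)^{+}$ and $w=(u-m)^{-}$, so that $|u-m|=v+w$. The defining property of $m\in\med(u)$ gives $\meas\{u>m\}\le\meas(\Omega)/2$ and $\meas\{u<m\}\le\meas(\Omega)/2$; consequently $v$ vanishes on $\{u\le m\}$ and $w$ vanishes on $\{u\ge m\}$, each a set of measure at least $\meas(\Omega)/2$. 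Equivalently, for every $t>0$ the super-level sets $\{v>t\}=\{u>m+t\}$ and $\{w>t\}=\{u<m-t\}$ have measure at most $\meas(\Omega)/2$.

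The core step is the following estimate, which I would establish for any fixed nonnegative $g\in BV(\Omega)$ satisfying $\meas\{g>0\}\le\meas(\Omega)/2$:
\[
  \left(\int_\Omega g^{\frac{d}{d-1}}\,\dx\right)^{\frac{d-1}{d}}\le C(\Omega)\,TV_\Omega(g).
\]
Setting $\mu(t)=\meas\{g>t\}$, Minkowski's integral inequality applied to the layer-cake representation $g=\int_0^{\infty}\ind_{\{g>t\}}\,\mathrm{d}t$ yields $\|g\|_{L^{d/(d-1)}(\Omega)}\le\int_0^{\infty}\mu(t)^{(d-1)/d}\,\mathrm{d}t$. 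Since $\mu(t)\le\meas(\Omega)/2$ for all $t>0$, the relative isoperimetric inequality on the bounded Lipschitz domain $\Omega$ bounds the smaller volume by the relative perimeter, $\mu(t)^{(d-1)/d}\le C(\Omega)\,P(\{g>t\};\Omega)$, and the coarea formula for $BV$ functions gives $\int_0^{\infty}P(\{g>t\};\Omega)\,\mathrm{d}t=TV_\Omega(g)$. Chaining these three facts proves the displayed inequality.

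Finally I would recombine: by the triangle inequality $\|u-m\|_{L^{d/(d-1)}(\Omega)}\le\|v\|_{L^{d/(d-1)}(\Omega)}+\|w\|_{L^{d/(d-1)}(\Omega)}\le C(\Omega)\bigl(TV_\Omega(v)+TV_\Omega(w)\bigr)$. A second use of the coarea formula, splitting the representation $TV_\Omega(u)=\int_{\R}P(\{u>s\};\Omega)\,\mathrm{d}s$ at $s=m$ and using that a set and its complement share the same relative perimeter, identifies $TV_\Omega(v)+TV_\Omega(w)=TV_\Omega(u)$, which gives \eqref{BV}. The main obstacle is the relative isoperimetric inequality with a constant depending only on $\Omega$: this is precisely where the Lipschitz (hence $BV$-extension) character of $\Omega$ enters, and it is in fact equivalent to the one-sided $BV$-Poincaré estimate above; once it is granted, the remainder is the routine coarea bookkeeping sketched here.
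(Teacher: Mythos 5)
Your proposal is correct, but it cannot be ``the same approach as the paper'' for the simple reason that the paper gives no proof of this statement at all: Theorem \ref{TBV} is imported as a black box from the reference \cite{Z}, and the appendix only uses it to derive the discrete median Poincar\'e--Wirtinger inequality. What you have written is a sound reconstruction of the standard argument lying behind that citation: the splitting of $u-m$ into $(u-m)^{+}$ and $(u-m)^{-}$, each vanishing on at least half of $\Omega$; the layer-cake/Minkowski bound $\|g\|_{L^{d/(d-1)}(\Omega)}\le\int_0^\infty \mu(t)^{(d-1)/d}\,\mathrm{d}t$; the relative isoperimetric inequality applied to the superlevel sets (legitimate because $\mu(t)\le\meas(\Omega)/2$ makes $\{g>t\}$ the smaller side); and the coarea formula, used once for the one-sided estimate and once more to get the exact identity $TV_\Omega\bigl((u-m)^{+}\bigr)+TV_\Omega\bigl((u-m)^{-}\bigr)=TV_\Omega(u)$ --- for the latter you implicitly use that $P(\{u<s\};\Omega)=P(\{u\ge s\};\Omega)=P(\{u>s\};\Omega)$ for a.e.\ $s$, which is fine and worth stating. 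Two caveats should be made explicit. First, connectedness of $\Omega$ is indispensable for the relative isoperimetric inequality with a finite constant (two disjoint balls carrying distinct constants give $TV_\Omega(u)=0$ with a nonzero left-hand side); it is part of the paper's standing hypotheses, but you invoke only ``bounded Lipschitz''. Second, your argument is a reduction rather than a self-contained proof: the relative isoperimetric inequality on a connected Lipschitz domain is precisely the nontrivial input, as you yourself acknowledge. Since the paper's own treatment is a citation, granting that input is entirely reasonable; what your route buys is transparency about where $C(\Omega)$ comes from and why the median, not the mean, is the natural centering for $BV$ functions.
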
{}

In the spirit of \cite{zbMATH06476912} (which studied Discrete Poincaré-Wirtinger mean inequality), let us prove now the following proposition
 \begin{prop}[Discrete Poincaré-Wirtinger median inequality]\label{PWM} Let $\Omega$ be an open bounded connected polyhedral domain of $\R^d$ and let $\mathcal{M}$ be an admissible mesh satisfying \eqref{cmesh}. Then for $1\leq p <+\infty
  $ there exists a constant $C>0$ only depending on $\Omega$, $d$ and $p$ such that
  \begin{equation}
      \|u-c\|_{0,p}\leq \dfrac{C}{\xi^{(p-1)/p}}|u|_{1,p,\mathcal{M}}, \hspace*{1cm}\forall u\in X(\mathcal{T})
  \end{equation}{}
  where $c$ is in $\med(u)$.
  \end{prop}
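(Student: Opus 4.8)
The plan is to import the continuous embedding of Theorem~\ref{TBV} into the discrete world through the identity $TV_\Omega(u)=|u|_{1,1,\M}$ for $u\in X(\T)$ (a polyhedral $\Omega$ being Lipschitz, the theorem applies), and to promote its fixed target exponent $d/(d-1)$ to an arbitrary $p$ by a power trick. Since $c\in\med(u)$ is equivalent to $0\in\med(u-c)$, I first observe that for any $s\ge1$ the map $g(t)=|t|^{s}\sign(t)$ is odd and nondecreasing with $g(0)=0$, so the piecewise-constant function $w\in X(\T)$ with $w_K=g(u_K-c)$ satisfies $0\in\med(w)$. Applying Theorem~\ref{TBV} to $w$ with median value $0$, and using $TV_\Omega(w)=|w|_{1,1,\M}$ together with $\int_\Omega|w|^{d/(d-1)}=\int_\Omega|u-c|^{sd/(d-1)}$, gives
\[
\|u-c\|_{0,sd/(d-1)}^{s}\le C(\Omega)\,|w|_{1,1,\M}.
\]

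Next I estimate $|w|_{1,1,\M}=\sum_{\sigma=K|L}\m(\sigma)\,|g(u_K-c)-g(u_L-c)|$ using the elementary bound $|g(a)-g(b)|\le s\max(|a|,|b|)^{s-1}|a-b|$ (mean value theorem, valid for $s\ge1$). A discrete Hölder inequality with exponents $p$ and $p'=p/(p-1)$, organised so that the first factor reconstitutes $|u|_{1,p,\M}$ and using $(p-1)p'/p=1$, yields
\[
|w|_{1,1,\M}\le s\,|u|_{1,p,\M}\Big(\sum_{\sigma=K|L}\m(\sigma)\,d_\sigma\,\max(|u_K-c|,|u_L-c|)^{(s-1)p'}\Big)^{1/p'}.
\]
The decisive step, which generates the $\xi$-dependence, is to bound the remaining edge sum. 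Writing $\m(\sigma)d_\sigma=d\,\m(D_\sigma)$, using $\max(\cdot,\cdot)^{r}\le|u_K-c|^{r}+|u_L-c|^{r}$ with $r=(s-1)p'$, invoking \eqref{cmesh} in the form $\m(D_\sigma)\le\xi^{-1}\m(D_{K,\sigma})$ (and symmetrically for $L$), and finally the partition property $\sum_{\sigma\in\E_K}\m(D_{K,\sigma})=\m(K)$, I obtain $\sum_{\sigma}\m(\sigma)d_\sigma\max(\cdot,\cdot)^{r}\le\frac{d}{\xi}\|u-c\|_{0,r}^{r}$. Collecting these bounds produces the master inequality
\[
\|u-c\|_{0,sd/(d-1)}^{s}\le C(\Omega)\,s\,(d/\xi)^{1/p'}\,|u|_{1,p,\M}\,\|u-c\|_{0,(s-1)p'}^{s-1},
\]
whose $\xi$-exponent $1/p'=(p-1)/p$ is exactly the one claimed.

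It then remains to choose $s$ and absorb the $\|u-c\|$ factor. For $1\le p<d$ I take $s=p(d-1)/(d-p)\ge1$, for which both exponents $sd/(d-1)$ and $(s-1)p'$ equal the Sobolev conjugate $p^{*}=pd/(d-p)$; dividing the master inequality by $\|u-c\|_{0,p^{*}}^{s-1}$ (the case $u\equiv c$ being trivial) gives $\|u-c\|_{0,p^{*}}\le C\,|u|_{1,p,\M}$, and since $\Omega$ is bounded and $p\le p^{*}$ the inequality $\|u-c\|_{0,p}\le\m(\Omega)^{1/p-1/p^{*}}\|u-c\|_{0,p^{*}}$ finishes this range. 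For $p\ge d$ I instead take $s=p(d-1)/d\ge d-1\ge1$, so the left exponent equals exactly $p$ while $q:=(s-1)p'<p$; bounded-domain Hölder $\|u-c\|_{0,q}\le\m(\Omega)^{1/q-1/p}\|u-c\|_{0,p}$ inserted into the master inequality, followed by division by $\|u-c\|_{0,p}^{s-1}$, again closes the estimate.

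The main obstacle is the master inequality, and inside it the geometric summation controlled by \eqref{cmesh}: it is what simultaneously converts the edge sum back into an $L^{r}$-norm of $u-c$ and produces the precise weight $\xi^{-(p-1)/p}$. The rest is bookkeeping of Hölder exponents and a self-improving (absorbing) argument, which is licit because every $u\in X(\T)$ is bounded on the bounded domain $\Omega$, so all the $L^{q}$-norms in play are finite.
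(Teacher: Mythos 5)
Your proof is correct, and its skeleton coincides with the paper's own proof: the same key lemma (Theorem \ref{TBV}, imported to the discrete setting through the identity $TV_\Omega(w)=|w|_{1,1,\M}$), the same odd-power map preserving the null median, the same mean-value-theorem plus H\"older $(p,p')$ estimate of the discrete $W^{1,1}$ seminorm, the same use of \eqref{cmesh} combined with the partition of each cell by the cones $D_{K,\sigma}$ (this is exactly the paper's ``discrete integration by parts'' step, which produces the weight $\xi^{-(p-1)/p}$ there as well), and the same final absorption of the lower-order norm. Where you genuinely depart from the paper is the exponent bookkeeping: the paper fixes the power $s=p$ and immediately discards the Sobolev gain by downgrading $L^{d/(d-1)}(\Omega)$ to $L^1(\Omega)$, so that the $L^1$ norm of the auxiliary function is exactly $\|u-m\|_{0,p}^p$ and the whole estimate closes at the single exponent $p$ with no case distinction; you instead keep the $d/(d-1)$ gain, which forces your two-case choice of $s$ but buys a strictly stronger intermediate result, namely the discrete Sobolev--Poincar\'e median inequality $\|u-c\|_{0,p^*}\leq C\,\xi^{-(p-1)/p}\,|u|_{1,p,\M}$ for $1\leq p<d$, from which the stated proposition follows by H\"older on the bounded domain. (Taking $s=p$ in your master inequality and downgrading on the left would reproduce the paper's one-case argument.) Two degenerate points deserve a word but are not gaps: for $p=1$ (where $p'=\infty$) and for $p=d=2$ (where $q=(s-1)p'=0$), your formulas must be read with $s=1$, in which case the max factor disappears, no H\"older splitting is needed, and the master inequality reduces to Theorem \ref{TBV} itself.
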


  \begin{proof}[Proof of Proposition \ref{PWM}] \label{proofpc}
Let $u=(u_{K})_{K\in\T}$ be a function of $X(\T)$ and let $m$ be an element of
$\med(u)$. We define $v \in X(\T)$ by $v_K=(u_k-m)|u_k-m|^{p-1}$ for all $K\in
\T$. Since $m \in \med (u)$, we have $0 \in \med(v)$, using inequality
\eqref{BV}, we obtain
\begin{equation}
\|v\|_{0,\frac{d}{d-1}} \leq C(\Omega) |v|_{1,1,\M},
\end{equation}
and using the inclusion of $L^{\frac{d}{d-1}}(\Omega)$ into $L^1(\Omega)$, we get
\begin{equation}\label{A}
    \|v\|_{0,1} \leq C(\Omega,d)|v|_{1,1,\M},
\end{equation}
where the constant $C$ depends on $\Omega$ and $d$.\\
Moreover, for all $K, L \in \T$, we have
\begin{equation}
\begin{aligned}\label{B}
     |v_K-v_L|&=|u_K-u_L||v'(w_{LK})|, \hspace*{1cm} \forall w_{LK}\in[u_k,u_L],\\
     &\leq p|u_K-u_L||w_{LK}-m|^{p-1}\\
     &\leq p|u_K-u_L|\left (|u_K-m|^{p-1} + |u_L-m|^{p-1} \right).
\end{aligned}{}
\end{equation}{}
Therefore, gathering \eqref{A} and \eqref{B}, we obtain
\begin{equation}
\begin{aligned}{}
    \|v\|_{0,1}=\||u-m|^p\|_{0,1}&=\|u-m\|_{0,p}^p\\
    &\leq C \sum_{\underset{\sigma=K|L}{\sigma\in\mathcal{E}_{int}}}\m(\sigma)p|u_k-u_L|\left( |u_K-m|^{p-1} + |u_L-m|^{p-1}\right).
\end{aligned}
\end{equation}{}
Using Hölder's inequality we get,
\begin{equation}
\begin{aligned}{}
    \|u-m\|_{0,p}^p &\leq p C \left( \sum_{\underset{\sigma=K|L}{\sigma\in\mathcal{E}_{int}}}\frac{\m(\sigma)}{d_\sigma^{p-1}} |u_K-u_L|^p \right)^{\frac{1}{p}}\\ & \times \left(  \sum_{\underset{\sigma=K|L}{\sigma\in\mathcal{E}_{int}}} \m(\sigma)\left(d_\sigma^{\frac{p-1}{p}}\right)^{\frac{p}{p-1}}\left( |u_K-m|^{p-1} + |u_L-m|^{p-1}\right)^{\frac{p}{p-1}}\right)^{\frac{p-1}{p}}\\
    &\leq p C |u-m|_{1,p,\M} \left(\sum_{\underset{\sigma=K|L}{\sigma\in\mathcal{E}_{int}}} \m(\sigma)d_\sigma\dfrac{p}{p-1}\left(|u_K-m|^p+ |u_L-m|^p\right) \right)^{\frac{p-1}{p}}.
    \end{aligned}
\end{equation}{}
The regularity constraint \eqref{cmesh} on the mesh ensures that
\begin{equation}\label{C}
    \sum_{\sigma\in \E_{int}}\m(\sigma)d_\sigma\leq \frac{1}{\xi} \sum_{K\in \T}
    \sum_{\sigma\in \E_K}\m(\sigma)d(x_K,\sigma)=\frac{N}{\xi}\sum_{K\in
      \T}\m(K).
\end{equation}{}
Then applying  the previous inequality \eqref{C} and a discrete integration by parts, we get
\begin{equation}
\begin{aligned}{}
    \|u-m\|_{0,p}^p & \leq p C(\Omega)
    |u-m|_{1,p,\M}\left(\frac{p}{p-1}\right)^{\frac{p-1}{p}}\left(\sum_{K\in\T}\sum_{\sigma\in\E_K}\m(\sigma)d_\sigma|u_K-m|^p\right)^{\frac{p-1}{p}}\\
    & \leq \frac{ C(\Omega,p,d)}{\xi^{\frac{p-1}{p}}}|u-m|_{1,p,\M}\|u-m\|_{0,p}^{p-1}.
    \end{aligned}
\end{equation}{}
Then we obtain the general result
\begin{equation*}
    \|u-m\|_{0,p}\leq \dfrac{C}{\xi^{(p-1)/p}}|u|_{1,p,\mathcal{M}}, \hspace*{1cm}\forall u\in X(\mathcal{T}).
\end{equation*}
\end{proof}

\bibliography{biblio.bib}

\bibliographystyle{acm}

\end{document}